\newcommand{\Diag}[1]{\textup{Diag}{(#1)}}
\newcommand{\matr}[2]{[#1\ \dots\ #2]}
\newcommand{\condWmC}{$\text{\textup{(W{\scriptsize m}}}_{\mathbf C}$$\text{\textup{)}}$}
\newcommand{\condWm}{$\text{\textup{(W{\scriptsize m}}}$$\text{\textup{)}}$}
\newcommand{\condUm}{$\text{\textup{(U{\scriptsize m}}}$$\text{\textup{)}}$}
\newcommand{\condCm}{$\text{\textup{(C{\scriptsize m}}}$$\text{\textup{)}}$}
\newcommand{\condKm}{$\text{\textup{(K{\scriptsize m}}}$$\text{\textup{)}}$}
\newcommand{\condUmC}{$\text{\textup{(U{\scriptsize m}}}_{\mathbf C}$$\text{\textup{)}}$}
\newcommand{\condUmA}{$\text{\textup{(U{\scriptsize m}}}_{\mathbf A}$$\text{\textup{)}}$}
\newtheorem{example}[theorem]{Example}
\newtheorem{procedure}[theorem]{Procedure}
\title{Generic uniqueness conditions for the canonical polyadic decomposition 
and INDSCAL
\thanks{Research supported by: (1) Research Council KU Leuven: C1 project c16/15/059-nD, GOA/10/09 MaNet, CoE PFV/10/002 (OPTEC), PDM postdoc grant; (2) F.W.O.:  project  G.0830.14N, G.0881.14N;  (3) the Belgian Federal Science Policy Office: IUAP P7 (DYSCO II,  Dynamical systems, control
and optimization,  2012-2017); (4)  EU: The research leading to these results has received funding from the European Research Council under the European Union's Seventh Framework Programme (FP7/2007-2013) / ERC Advanced Grant: BIOTENSORS (no.  339804). This paper reflects only the authors' views and the Union is not liable for any use that may be made of the contained information.
}}
\author{Ignat Domanov\footnotemark[2] \footnotemark[3] \footnotemark[4]
\and
Lieven De Lathauwer\footnotemark[2] \footnotemark[3] \footnotemark[4]}
\begin{document}
\maketitle
\renewcommand{\thefootnote}{\fnsymbol{footnote}}

\footnotetext[2]{Group Science, Engineering and Technology, KU Leuven - Kulak,
E. Sabbelaan 53, 8500 Kortrijk, Belgium
 ({\tt ignat.domanov,\ lieven.delathauwer@kuleuven-kulak.be}).}
\footnotetext[3]{Dept. of Electrical Engineering  ESAT/STADIUS KU Leuven,
Kasteelpark Arenberg 10, bus 2446, B-3001 Leuven-Heverlee, Belgium.}
\footnotetext[4] {iMinds Medical IT.}

\begin{abstract}
We find conditions that guarantee that a decomposition of a generic third-order tensor in a minimal number of rank-$1$ tensors
(canonical polyadic decomposition (CPD)) is unique up to permutation of rank-$1$ tensors.
Then we consider the case when the tensor and all its rank-$1$ terms have symmetric frontal slices (INDSCAL).
Our results  complement  the existing  bounds for generic uniqueness of the CPD and relax the existing bounds for
INDSCAL. The derivation makes use of algebraic geometry. We stress  the power of the underlying concepts for proving generic properties in mathematical engineering.
\end{abstract}

\begin{keywords}
canonical polyadic decomposition, CANDECOMP/PARAFAC decomposition, INDSCAL, third-order tensor, uniqueness, algebraic geometry
\end{keywords}

\begin{AMS}
15A69, 15A23, 14A10, 14A25, 14Q15
\end{AMS}

\pagestyle{myheadings}
\thispagestyle{plain}
\markboth{IGNAT DOMANOV AND LIEVEN DE LATHAUWER}{Generic uniqueness conditions for CPD and INDSCAL}

\section{Introduction}
\subsection{Basic definitions}
Throughout the paper $\mathbb F$ denotes the field of real or complex numbers.
A  tensor $\mathcal T=(t_{ijk})\in\mathbb F^{I\times J\times K}$ is {\em rank-$1$} 
if there exist  three nonzero vectors $\mathbf a\in\mathbb F^I$,
$\mathbf b\in\mathbb F^J$ and $\mathbf c\in\mathbb F^K$ such that
$\mathcal T=\mathbf a\circ\mathbf b\circ\mathbf c$,
in  which ``$\circ$'' denotes the {\em outer product}. That is,  $t_{ijk} = a_i b_j c_k$ for all values of the indices.
A {\em  polyadic  decomposition} (PD) of a third-order tensor $\mathcal T$ expresses $\mathcal T$ as a  sum of rank-$1$ terms,
\begin{equation}
\mathcal T=\sum\limits_{r=1}^R\mathbf a_r\circ \mathbf b_r\circ \mathbf c_r,
 \label{eqintro2}
\end{equation}
where
$\mathbf a_r \in \mathbb F^{I}$, $\mathbf b_r \in \mathbb F^{J}$, $\mathbf c_r \in \mathbb F^{K}$ are nonzero vectors.
We will write \eqref{eqintro2}  as $\mathcal T=[\mathbf A,\mathbf B,\mathbf C]_R$,
where
$
\mathbf A =[\mathbf a_1\ \dots\ \mathbf a_R] \in\mathbb F^{I\times R},\quad
\mathbf B =[\mathbf b_1\ \dots\ \mathbf b_R]\in\mathbb F^{J\times R},\quad
\mathbf C =[\mathbf c_1\ \dots\ \mathbf c_R]\in\mathbb F^{K\times R}.
$

If the number $R$ in \eqref{eqintro2} is minimal, then it is called {\em the rank of $\mathcal T$} and is denoted by $r_{\mathcal T}$.
In this case we say that \eqref{eqintro2} is a {\em canonical polyadic decomposition} (CPD) of $\mathcal T$.
The CPD was introduced by F.L. Hitchcock in \cite{Hitchcock}. It is also  referred to as rank decomposition,
Canonical Decomposition (Candecomp) \cite{1970_Carroll_Chang}, and  Parallel Factor Model (Parafac) \cite{Harshman1970,1994HarshmanLundy}.

It is clear that in \eqref{eqintro2}  the rank-$1$ terms can be
arbitrarily permuted and that vectors within the same rank-$1$ term can be
arbitrarily scaled provided the overall rank-$1$ term remains the same. 
{\em The CPD of a tensor  is  unique} when it is only subject to these trivial indeterminacies.

We call tensors  whose  frontal slices are symmetric matrices (implying $I=J$)  SFS-tensors, where the abbreviation
``SFS'' stands for ``symmetric frontal slices''. 
It is clear that if an SFS tensor $\mathcal T$ is rank-$1$, then $\mathcal T=\mathbf a\circ\mathbf a\circ\mathbf c$ for some nonzero vectors
$\mathbf a \in \mathbb F^{I}$ and $\mathbf c \in \mathbb F^{K}$. 
Similarly to the unstructured case above, one can easily 
define the  SFS-rank, the SFS-CPD, and  the uniqueness of the SFS-CPD
of an SFS-tensor $\mathcal T$ (see \cite[Section 4]{PartII} for the exact definitions).
Note that the SFS-CPD corresponds to the INdividual Differences in multidimensional SCALing (INDSCAL) model, as
		 introduced by Carroll and Chang \cite{1970_Carroll_Chang}.
To the authors' knowledge, it is still an open question whether
there exist SFS-tensors with unique SFS-CPD but non-unique CPD.

Blind signal separation (BSS) consists of the splitting of
signals into meaningful, interpretable components. 
The CPD  has become a  standard tool for BSS:
 the known mixture of signals corresponds to a given tensor $\mathcal T$ and 
 the unknown interpretable components correspond to the rank-$1$ terms in its CPD.
 For the interpretation of the components one should be able to assess whether the CPD is unique.
 The SFS-CPD is a constrained version of the CPD. 
 In the original formulation of the INDSCAL model (or SFS-CPD) the frontal slices of $\mathcal T$ were distance matrices.
  Nowadays, \mbox{SFS-CPD}  is widely used in  independent component analysis (ICA) where the frontal slices of $\mathcal T$ are spatial covariance matrices. The SFS-CPD interpretation of ICA allows one to handle the underdetermined case (more sources than sensors).
        The (SFS-)CPD   based approach 
has found many applications in signal processing \cite{Cichocki2009},\cite{ComoJ10},
data analysis \cite{Kroonenberg2008}, chemometrics \cite{smilde2004multi},
psychometrics \cite{1970_Carroll_Chang}, etc.
We refer the readers to the overview papers  \cite{KoldaReview,2009Comonetall,Lieven_ISPA, Sorber,LievenCichocki2013}
and the references therein for background,  applications and algorithms.

The most famous result on uniqueness is due to J. Kruskal \cite{Kruskal1977}. The $k$-rank of a matrix $\mathbf A$ is defined as the largest integer $k_{\mathbf A}$ such that
any $k_{\mathbf A}$ columns of $\mathbf A$ are linearly independent. Kruskal's theorem states that if $\mathcal T=[\mathbf A,\mathbf B,\mathbf C]_R$ and
\begin{equation}
 R\leq \frac{k_{\mathbf A}+k_{\mathbf B}+k_{\mathbf C}-2}{2},\label{eqkruskal}
\end{equation}
then $r_{\mathcal T}=R$ and the CPD of $\mathcal T$ is unique.

Condition \eqref{eqkruskal} is an example of a deterministic condition for uniqueness in the sense that the uniqueness of the CPD can be
guaranteed for a particular choice of the matrices $\mathbf A$, $\mathbf B$, and $\mathbf C$.
Checking deterministic conditions can be cumbersome. For instance, in \eqref{eqkruskal} the computation of the $k$-ranks has combinatorial complexity.
If the entries of  matrices $\mathbf A$, $\mathbf B$, and $\mathbf C$ are  drawn
from continuous distributions then one can consider uniqueness with probability one or generic uniqueness.
Generic conditions are often easy to check; they usually just take the form of a bound on the rank as a function of the tensor dimensions. In this paper we derive new, relaxed conditions for the generic uniqueness of CPD and SFS-CPD. 
We resort to the following definitions.
\begin{definition} 
Let $\mu_{\mathbb F}$ be the Lebesgue measure on $\mathbb F^{I\times R}\times \mathbb F^{J\times R}\times \mathbb F^{K\times R}$.
The CPD  of an $I\times J\times K$ tensor of rank $R$ is $generically$ $unique$ if
\begin{equation*}
\mu_{\mathbb F}\{(\mathbf A,\mathbf B,\mathbf C):\ \text{the CPD }  \text{of the tensor }\ [\mathbf A,\mathbf B,\mathbf C]_R
\text{ is not unique }\}=0.
\end{equation*}
\end{definition}
\begin{definition} 
Let $\mu_{\mathbb F}$ be the Lebesgue measure on $\mathbb F^{I\times R}\times \mathbb F^{K\times R}$.
The SFS-CPD  of an $I\times I\times K$ tensor of SFS-rank $R$ is $generically$ $unique$ if
$$
\mu_{\mathbb F}\{(\mathbf A,\mathbf C):\ \text{the SFS-CPD }  \text{of the tensor }\ [\mathbf A,\mathbf A,\mathbf C]_R
\text{ is not unique }\}=0.
$$
\end{definition}
\subsection{Previous results on generic uniqueness of the CPD}
Since the k-rank of a generic  matrix coincides with its minimal dimension, the Kruskal theorem implies the following result: if  
\begin{equation}
R\leq \frac{\min(I,R)+\min(J,R)+\min(K,R)-2}{2},\label{eqkruskalgeneric}
\end{equation}
then the CPD of an $I\times J\times K$ tensor of rank $R$ is generically unique.
Without loss of generality we may assume that $2\leq I\leq J\leq K$. Then \eqref{eqkruskalgeneric} guarantees generic uniqueness for
 $R\leq \min(I+J-2,K)$ and $K< R\leq (I+J+K)/2$. Kruskal's condition is not necessary in general. It was shown in \cite[Proposition 5.2]{ChiantiniandOttaviani}
that if $3\leq I\leq J$ and $\mathbb F=\mathbb C$, then  generic uniqueness holds if 
\begin{equation}
R\leq (I-1)(J-1)\ \text { and }\ (I-1)(J-1)\leq K.\label{eq: Strassen}
\end{equation}
A similar result (involving a different condition in the second part of \eqref{eq: Strassen}) had been obtained before in \cite[Theorem 2.7]{Strassen1983}.
 In the following proposition we collect theoretically proven bounds on $R$ that guarantee generic uniqueness of the CPD for the complimentary case where $K\leq R$.
\begin{proposition} 
\label{propositionalggeom}
Let $2\leq I\leq J\leq K\leq R$.
Then each of the following conditions implies that
 the CPD of an $I\times J\times K$ tensor of rank $R$ is generically unique:
\begin{itemize}
 \item[\textup{(i)}] 
	 $R\leq IJK/(I+J+K-2)-K$, $3\leq I$, $\mathbb F=\mathbb C$ \cite[Corollary 6.2]{Bocci2013}, \cite[Corollary 3.7, $K$ is odd]{Strassen1983};
 \item[\textup{(ii)}]  
 $R\leq 2^{\alpha+\beta-2}$, where  $\alpha$ and $\beta$ are maximal integers such that $2^\alpha\leq I$  and $2^\beta\leq J$
   \cite[Theorem 1.1]{ChiantiniandOttaviani};
 \item[\textup{(iii)}]  $R\leq \frac{I+J+K-2}{2}$ (follows from Kruskal's bound \eqref{eqkruskalgeneric}).
\end{itemize} 
\end{proposition}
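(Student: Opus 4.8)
The plan is to treat this proposition as an assembly of three already-established generic-uniqueness bounds, so that the work splits into two ingredients: a single transfer principle that lets us import statements phrased in the language of algebraic geometry into the Lebesgue-measure framework of the definition of generic uniqueness of the CPD, and then a case-by-case matching of the cited theorems to the three displayed inequalities under the standing hypothesis $2\le I\le J\le K\le R$. The proposition asserts nothing new beyond this bookkeeping, so no fresh dimension count or secant-variety argument is needed here; those are carried out in the references.

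For the transfer principle I would first record the elementary fact that the set of parameter triples $(\mathbf A,\mathbf B,\mathbf C)$ for which $r_{\mathcal T}<R$ or for which the CPD of $[\mathbf A,\mathbf B,\mathbf C]_R$ fails to be unique is contained in the real (respectively complex) zero locus of finitely many polynomials that do not all vanish identically on the parameter space. Since a nonzero polynomial vanishes on a set of Lebesgue measure zero, each cited result — which establishes uniqueness outside a proper algebraic subvariety — immediately yields $\mu_{\mathbb F}$ of the exceptional set equal to zero, exactly as the definition requires. Part (iii) is then self-contained and needs no geometry: since $I\le J\le K\le R$ we have $\min(I,R)=I$, $\min(J,R)=J$, $\min(K,R)=K$, so the generic Kruskal bound \eqref{eqkruskalgeneric} reads $R\le(I+J+K-2)/2$, which is precisely (iii), and this holds over both $\mathbb R$ and $\mathbb C$ because Kruskal's theorem is field-independent.

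For (i) I would invoke \cite[Corollary 6.2]{Bocci2013} (and, for the historically earlier case of odd $K$, \cite[Corollary 3.7]{Strassen1983}), verifying that the bound $R\le IJK/(I+J+K-2)-K$ together with $3\le I$, $\mathbb F=\mathbb C$, and $K\le R$ reproduces the hypotheses of those results. For (ii) I would quote \cite[Theorem 1.1]{ChiantiniandOttaviani} directly, with $\alpha$ and $\beta$ the largest integers satisfying $2^\alpha\le I$ and $2^\beta\le J$, checking only that the statement is compatible with the regime $K\le R$.

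The main obstacle is not any computation but the careful reconciliation of the two genericity notions, and within that the delicate point is the field issue. The results behind (i) are proven over the algebraically closed field $\mathbb C$, through dimension estimates for secant varieties and the analysis of the contact locus, and they do not transfer automatically to $\mathbb R$ — which is exactly why (i) is stated only for $\mathbb F=\mathbb C$. I would therefore make sure that for the items stated for general $\mathbb F$, namely (ii) and (iii), the underlying argument is genuinely field-independent: Kruskal's theorem holds over any field, and the combinatorial construction behind (ii) exhibits an explicit parameter value with a certifiably unique decomposition whose defining non-degeneracy condition is a real (hence also complex) polynomial inequality. Confirming in each of the three cases that the quoted exceptional locus is cut out by polynomials that are \emph{not} identically zero on $\mathbb F^{I\times R}\times\mathbb F^{J\times R}\times\mathbb F^{K\times R}$ is the step that secures the measure-zero conclusion demanded by the definition.
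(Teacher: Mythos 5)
Your proposal is correct and coincides with the paper's own treatment: the paper states this proposition purely as a collection of previously established results, citing Bocci et al.\ and Strassen for (i), Chiantini--Ottaviani for (ii), and noting that (iii) follows from the generic form of Kruskal's bound \eqref{eqkruskalgeneric}, with no further argument given. The measure-theoretic transfer principle you supply (exceptional sets contained in proper Zariski-closed subsets have Lebesgue measure zero over $\mathbb R$ and $\mathbb C$) is precisely the mechanism the paper itself formalizes in Section~\ref{Subsection2.2} via Lemma~\ref{lemma1}, so your route is essentially the same as the paper's.
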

The theoretical bounds in Proposition \ref{propositionalggeom} can be further relaxed.
According to the recent paper \cite{Nick2014} the CPD is generically unique (with a few known exceptions) if
 \begin{equation}
 R\leq \left\lceil\frac{IJK}{I+J+K-2}\right\rceil-1, \quad IJK\leq 15000,\label{eq:Nick}
 \end{equation}
 where  $\lceil x \rceil$ denotes the smallest integer not less than $x$.
 The proof of \eqref{eq:Nick} involves the  computation of the kernel of a certain $IJK\times R(I+J+K)$ matrix for a random example with the given dimensions and number of rank-$1$ terms.   
   Similarly, Proposition \ref{prop:generic propositionunstructured} below guarantees generic uniqueness of the CPD
 if at least one of some specially constructed matrices has full column rank.
The conditions in  Proposition \ref{prop:generic propositionunstructured} are formulated in terms of the Khatri-Rao product of $m$-th compound  matrices of $\mathbf A$ and $\mathbf B$.  Recall that {\em the $m$-th compound matrix} of
an
 $I\times R$ matrix $\mathbf A$ (denoted by $\mathcal C_m(\mathbf A)$) is defined for $m\leq\min(I,R)$ and is the 
$\binom{I}{m}\times \binom{R}{m}$  matrix containing the determinants of all $m\times m$
submatrices of $\mathbf A$, arranged with the submatrix index sets in lexicographic order. We refer the reader to
\cite{PartI} for more details on compound matrices.
{\em The Khatri-Rao product} of the matrices $\mathbf A$ and $\mathbf B$ is defined by
$$
\mathbf A\odot \mathbf B=
[\mathbf a_1\ \dots\ \mathbf a_R] \odot
[\mathbf b_1\ \dots\ \mathbf b_R]:=
[\mathbf a_1\otimes\mathbf b_1\ \dots\ \mathbf a_R\otimes\mathbf b_R],
$$
where ``$\otimes$'' denotes the {\em Kronecker product}.

\begin{proposition}\cite[Proposition 1.31]{PartII}\label{prop:generic propositionunstructured}
The  CPD of  an $I\times J\times K$ tensor of rank $R$ is
generically unique if there exist matrices $\mathbf A_0\in \mathbb F^{I\times R}$,  $\mathbf B_0\in \mathbb F^{J\times R}$, and
$\mathbf C_0\in \mathbb F^{K\times R}$ such that at least one of the following conditions holds:
\begin{itemize}
\item[\textup{(i)}]
$\mathcal C_{m_{\mathbf C}}(\mathbf A_0)\odot \mathcal C_{m_{\mathbf C}}(\mathbf B_0)$  has full column rank, where $m_{\mathbf C}=R-\min (K,R)+2$;
\item[\textup{(ii)}]
$\mathcal C_{m_{\mathbf A}}(\mathbf B_0)\odot \mathcal C_{m_{\mathbf A}}(\mathbf C_0)$  has full column rank, where $m_{\mathbf A}=R-\min (I,R)+2$;
\item[\textup{(iii)}]
$\mathcal C_{m_{\mathbf B}}(\mathbf C_0)\odot \mathcal C_{m_{\mathbf B}}(\mathbf A_0)$  has full column rank, where $m_{\mathbf B}=R-\min (J,R)+2$.
\end{itemize}
\end{proposition}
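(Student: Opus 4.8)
The plan is to deduce this generic statement from its deterministic counterpart together with a Zariski-openness argument, so the real content is packaging rather than a new uniqueness mechanism. First, by the symmetry of the three tensor modes, relabelling which factor is ``first'', ``second'' and ``third'' carries condition (ii) (respectively (iii)) for $\mathcal{T}$ into condition (i) for a mode-permuted tensor; since CPD uniqueness and the product Lebesgue measure are both invariant under such relabelling, it suffices to treat condition (i). Observe also that $\mathbf{C}_0$ never appears in (i) (nor $\mathbf{A}_0$ in (ii), nor $\mathbf{B}_0$ in (iii)); it is listed only to keep the three statements parallel, so the hypothesis in case (i) is simply the existence of $\mathbf{A}_0,\mathbf{B}_0$ making $\mathcal{C}_{m_{\mathbf{C}}}(\mathbf{A}_0)\odot\mathcal{C}_{m_{\mathbf{C}}}(\mathbf{B}_0)$ of full column rank.

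The substantive input I would invoke is the deterministic uniqueness criterion of \cite{PartI,PartII}, which in the form relevant here states: if $\textup{rank}(\mathbf{C})=r_{\mathbf{C}}$, $m=R-r_{\mathbf{C}}+2$, and $\mathcal{C}_{m}(\mathbf{A})\odot\mathcal{C}_{m}(\mathbf{B})$ has full column rank, then $r_{\mathcal{T}}=R$ and the CPD of $[\mathbf{A},\mathbf{B},\mathbf{C}]_R$ is unique. Granting this, the remaining work is to show that its hypotheses hold for all $(\mathbf{A},\mathbf{B},\mathbf{C})$ outside a set of measure zero.

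This is where genericity enters. Each entry of $\mathcal{C}_{m_{\mathbf{C}}}(\mathbf{A})\odot\mathcal{C}_{m_{\mathbf{C}}}(\mathbf{B})$ is a product of an $m_{\mathbf{C}}\times m_{\mathbf{C}}$ minor of $\mathbf{A}$ with one of $\mathbf{B}$, hence a polynomial in the entries of $(\mathbf{A},\mathbf{B})$; therefore every maximal minor of this matrix is a polynomial $p(\mathbf{A},\mathbf{B})$, and ``full column rank'' means some such $p$ is nonzero. Because $p(\mathbf{A}_0,\mathbf{B}_0)\neq 0$ for the witnessing example, that $p$ is not the zero polynomial, so $\{p=0\}$ is a proper algebraic subvariety of $\mathbb{F}^{I\times R}\times\mathbb{F}^{J\times R}$ and thus $\mu_{\mathbb{F}}$-null; off it the compound Khatri--Rao product has full column rank. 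Independently, the set where $\textup{rank}(\mathbf{C})<\min(K,R)$ is cut out by the vanishing of all $\min(K,R)\times\min(K,R)$ minors of $\mathbf{C}$ and is likewise null, so generically $\textup{rank}(\mathbf{C})=\min(K,R)$ and the deterministic value $R-\textup{rank}(\mathbf{C})+2$ coincides with the $m_{\mathbf{C}}=R-\min(K,R)+2$ appearing in the statement. Taking the union of the two null sets and applying the deterministic criterion on its complement yields generic uniqueness.

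I expect the main obstacle to be not the measure-theoretic step, which is routine once polynomiality of the minors is noted, but aligning the deterministic criterion exactly: one must check that a generic $\mathbf{C}$ realizes the rank $\min(K,R)$ that makes $m_{\mathbf{C}}$ take the stated value, and that the full-column-rank hypothesis simultaneously certifies $r_{\mathcal{T}}=R$, so that $[\mathbf{A},\mathbf{B},\mathbf{C}]_R$ is genuinely a rank (canonical) decomposition and the conclusion concerns a \emph{CPD} rather than an arbitrary polyadic decomposition. A minor bookkeeping point is that the existence of the witnessing pair already forces $m_{\mathbf{C}}\le\min(I,J,R)$ and $\binom{R}{m_{\mathbf{C}}}\le\binom{I}{m_{\mathbf{C}}}\binom{J}{m_{\mathbf{C}}}$, so the compound matrices and the full-column-rank condition are well posed throughout the perturbation.
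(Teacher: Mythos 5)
Your overall architecture is the intended one: the paper does not prove this proposition itself (it imports it from \cite[Proposition 1.31]{PartII}), and the proof there, like the paper's proofs of its own generic results, runs exactly as you propose --- reduce (ii), (iii) to (i) by mode permutation, note that full column rank of $\mathcal C_{m_{\mathbf C}}(\mathbf A)\odot\mathcal C_{m_{\mathbf C}}(\mathbf B)$ is the nonvanishing of a polynomial (a maximal minor), hence holds off a proper algebraic subset once it holds at the witness $(\mathbf A_0,\mathbf B_0)$, and then apply a deterministic criterion. However, there is a genuine gap: the deterministic criterion you invoke is false as stated. Every \condCm/\condUm/\condWm-based deterministic result of \cite{PartI,PartII} (e.g.\ the one reproduced here as Proposition~\ref{ProFullUniq1onematrixWm}) carries, besides the compound-matrix condition, a $k$-rank hypothesis such as $\max(\min(k_{\mathbf A},k_{\mathbf B}-1),\min(k_{\mathbf A}-1,k_{\mathbf B}))+k_{\mathbf C}\geq R+1$, whereas your version imposes nothing on $\mathbf C$ beyond the tautology $\textup{rank}(\mathbf C)=r_{\mathbf C}$, and your genericity step secures only the \emph{rank} of $\mathbf C$, not its $k$-rank; these differ precisely when $K<R$. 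Concretely, take $K=2$, $R=3$, $\mathbf C=[\mathbf c\ \mathbf c\ \mathbf d]$ with $\mathbf c,\mathbf d$ independent (so $r_{\mathbf C}=2=\min(K,R)$ but $k_{\mathbf C}=1$), and generic $\mathbf A\in\mathbb F^{I\times 3}$, $\mathbf B\in\mathbb F^{J\times 3}$ with $I,J\geq 3$. Then $m_{\mathbf C}=3$ and $\mathcal C_3(\mathbf A)\odot\mathcal C_3(\mathbf B)$ is a nonzero column vector, hence has full column rank, yet $[\mathbf A,\mathbf B,\mathbf C]_3=(\mathbf a_1\mathbf b_1^T+\mathbf a_2\mathbf b_2^T)\circ\mathbf c+\mathbf a_3\circ\mathbf b_3\circ\mathbf d$ admits infinitely many rank-$3$ decompositions, obtained by re-factoring the rank-$2$ matrix $\mathbf a_1\mathbf b_1^T+\mathbf a_2\mathbf b_2^T$. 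So the non-uniqueness locus is not contained in the union of your two null sets, and your argument as written certifies uniqueness at points where it fails.

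The gap is fillable along your own lines, but the step must actually be done: excise a third null set on which the $k$-ranks are submaximal, using that generically $k_{\mathbf A}=\min(I,R)$, $k_{\mathbf B}=\min(J,R)$, $k_{\mathbf C}=\min(K,R)$ (maximal $k$-rank is a finite conjunction of maximal-rank conditions on column submatrices, each failing only on the zero set of a nontrivial polynomial --- a strictly stronger, but still generic, requirement than maximal rank). Then check that the $k$-rank hypothesis of the deterministic criterion holds on the complement: the existence of your witness forces $m_{\mathbf C}\leq\min(I,J,R)$, and a short case analysis shows that for matrices with maximal $k$-ranks this implies $\max(\min(k_{\mathbf A},k_{\mathbf B}-1),\min(k_{\mathbf A}-1,k_{\mathbf B}))+k_{\mathbf C}\geq R+1$. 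With that hypothesis restored, condition \condCm\ with $m=m_{\mathbf C}$ implies \condUm, hence both \condWmC\ and full column rank of $\mathbf A\odot\mathbf B$ (cf.\ \eqref{eq:Umimplies1}--\eqref{eq:Umimplies2}), so Proposition~\ref{ProFullUniq1onematrixWm} applies and yields $r_{\mathcal T}=R$ and uniqueness off the three null sets. This verification of the $k$-rank condition is precisely the step the paper performs explicitly (the ``Condition (i)'' paragraphs) in its proofs of Propositions~\ref{Propositionunstructured}, \ref{PropositionINDSCAL}, and~\ref{PropositionINDSCAL2}, and it is not optional.
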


It was shown in \cite{PartI,PartII} that if \eqref{eqkruskalgeneric} holds, then \textup{(i)}--\textup{(iii)} in Proposition \ref{prop:generic propositionunstructured} hold, i.e. Proposition \ref{prop:generic propositionunstructured} is more relaxed than \eqref{eqkruskalgeneric}. To see if \textup{(i)}--\textup{(iii)} hold for given dimensions and rank, it suffices to check a random example
 (more specifically, in which the entries of $\mathbf A_0$, $\mathbf B_0$, $\mathbf C_0$ are drawn from continuous probability
 densities).
\subsection{Previous results on generic uniqueness of the SFS-CPD}
The \\ generic uniqueness of the SFS-CPD has been less studied. 
From Kruskal's condition \eqref{eqkruskal} it follows that if 
\begin{equation}
R\leq \min(I,R)+\frac{\min(K,R)}{2}-1,\label{eqkruskalgenericindscal}
\end{equation}
then the  SFS-CPD of  an $I\times I\times K$ SFS-tensor of SFS-rank $R$ is generically unique. 
To the authors' knowledge the following counterpart of Proposition \ref{prop:generic propositionunstructured}
is the only known result on the generic uniqueness of the SFS-CPD.
\begin{proposition}\cite[Proposition 6.8]{PartII}, \cite[$K=R$]{Psycho2006}\label{prop111overall:indscal}
The  SFS-CPD of  an $I\times I\times K$ SFS-tensor of SFS-rank $R$ is generically unique 
if there exist matrices $\mathbf A_0\in \mathbb F^{I\times R}$ and  $\mathbf C_0\in \mathbb F^{K\times R}$ such that
 $\mathcal C_{m_{\mathbf C}}(\mathbf A_0)\odot \mathcal C_{m_{\mathbf C}}(\mathbf A_0)$ or
$\mathcal C_{m_{\mathbf A}}(\mathbf A_0)\odot \mathcal C_{m_{\mathbf A}}(\mathbf C_0)$
  has full column rank, where $m_{\mathbf C}=R-\min (K,R)+2$ and $m_{\mathbf A}=R-\min (I,R)+2$.
\end{proposition}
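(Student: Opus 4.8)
The plan is to deduce the statement from the \emph{deterministic} uniqueness theorems underlying Proposition~\ref{prop:generic propositionunstructured} (see \cite{PartI,PartII}), combined with a ``single witness implies generic'' argument carried out directly on the restricted parameter space $\mathbb F^{I\times R}\times\mathbb F^{K\times R}$. The starting observation is that for an SFS-tensor every SFS-CPD is in particular a CPD, so uniqueness of the ordinary CPD of $[\mathbf A,\mathbf A,\mathbf C]_R$ together with $r_{[\mathbf A,\mathbf A,\mathbf C]_R}=R$ immediately forces uniqueness of its SFS-CPD and forces $R$ to be its SFS-rank. Hence it suffices to show that, for almost all $(\mathbf A,\mathbf C)$, the tensor $[\mathbf A,\mathbf A,\mathbf C]_R$ has a unique CPD of rank $R$.

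First I would recall the deterministic core of Proposition~\ref{prop:generic propositionunstructured}: there is a sufficient condition, \emph{not} requiring genericity, of the form ``if $\mathbf C$ has full rank $\min(K,R)$ and $\mathcal C_{m_{\mathbf C}}(\mathbf A)\odot\mathcal C_{m_{\mathbf C}}(\mathbf B)$ has full column rank, then $r_{[\mathbf A,\mathbf B,\mathbf C]_R}=R$ and the CPD is unique'', together with its mode-permuted analogue in which $\mathbf A$ has full rank $\min(I,R)$ and $\mathcal C_{m_{\mathbf A}}(\mathbf B)\odot\mathcal C_{m_{\mathbf A}}(\mathbf C)$ has full column rank. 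Specializing $\mathbf B=\mathbf A$ turns these two conditions into exactly the two compound-matrix conditions in the statement, supplemented by a full-rank requirement on $\mathbf C$, respectively $\mathbf A$.

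The generic part then reduces to a measure-theoretic argument. Fix one of the two cases, say the first. The set of ``bad'' pairs $(\mathbf A,\mathbf C)$ for which the deterministic condition fails is contained in $V_1\cup V_2$, where $V_1$ is the locus on which $\mathbf C$ is rank-deficient and $V_2$ is the locus on which $\mathcal C_{m_{\mathbf C}}(\mathbf A)\odot\mathcal C_{m_{\mathbf C}}(\mathbf A)$ is column-rank-deficient. Each locus is cut out by the simultaneous vanishing of polynomials in the entries of $(\mathbf A,\mathbf C)$: maximal minors of $\mathbf C$ in the first case, and maximal minors of the Khatri--Rao product $\mathcal C_{m_{\mathbf C}}(\mathbf A)\odot\mathcal C_{m_{\mathbf C}}(\mathbf A)$, whose own entries are polynomials in $\mathbf A$, in the second. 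A proper algebraic subvariety of $\mathbb F^{I\times R}\times\mathbb F^{K\times R}$ has Lebesgue measure zero over both $\mathbb R$ and $\mathbb C$, so it remains only to check that each $V_i$ is proper, i.e.\ that at least one of its defining polynomials is not identically zero. For $V_1$ this is witnessed by any full-rank $\mathbf C$; for $V_2$ it is witnessed precisely by the matrix $\mathbf A_0$ whose existence is assumed in the statement. Consequently $V_1\cup V_2$ has measure zero, the deterministic condition holds off a measure-zero set, and generic uniqueness follows; the second case is identical after permuting the modes.

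The hard part is conceptual rather than computational: one must resist the temptation to quote Proposition~\ref{prop:generic propositionunstructured} as a black box. Its exceptional set is a measure-zero subset of the \emph{full} space $\mathbb F^{I\times R}\times\mathbb F^{I\times R}\times\mathbb F^{K\times R}$, and it may well contain the entire diagonal $\{\mathbf B=\mathbf A\}$, which is itself measure zero there; nothing could then be concluded about the restricted model. The substance of the proof is therefore the verification that, after the substitution $\mathbf B=\mathbf A$, the compound-matrix condition remains a non-trivial polynomial condition on the smaller space---and this is exactly the content of the assumed witness, which, as in the unstructured case, can be certified by a single random numerical example.
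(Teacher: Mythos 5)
First, a point of reference: the paper itself does not prove Proposition \ref{prop111overall:indscal} at all --- it is imported with citations from \cite[Proposition 6.8]{PartII} and \cite{Psycho2006} --- so your attempt can only be compared against the cited source's method and against the machinery this paper builds for its own analogous results (Propositions \ref{Prop:Indscal1}, \ref{Prop:Indscal2} and the generic arguments of Sections \ref{Section4}--\ref{Section5}). Measured that way, your route is the standard and correct one, and it is essentially the route of the cited source: specialize the deterministic \condCm-based uniqueness theorems of \cite{PartI,PartII} to $\mathbf B=\mathbf A$ (respectively to the reshaped tensor $[\mathbf A,\mathbf C,\mathbf A]_R$, exactly as the paper does in proving Proposition \ref{Prop:Indscal2}); observe that $r_{\mathcal T}=R$ plus uniqueness of the ordinary CPD forces SFS-rank $R$ and uniqueness of the SFS-CPD; then run the measure-zero argument directly on $\mathbb F^{I\times R}\times\mathbb F^{K\times R}$, using that failure of full column rank of $\mathcal C_{m}(\cdot)\odot\mathcal C_{m}(\cdot)$ is the simultaneous vanishing of its maximal minors, i.e.\ a polynomial condition whose non-triviality is certified by the assumed witness. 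Your closing remark is also the right conceptual point: Proposition \ref{prop:generic propositionunstructured} cannot be quoted as a black box, since its exceptional set could a priori contain the entire diagonal $\{\mathbf B=\mathbf A\}$; re-running the openness-plus-witness argument on the restricted space is what gives the SFS statement content. It is worth noting that, unlike Propositions \ref{Propositionunstructured}, \ref{PropositionINDSCAL}, \ref{PropositionINDSCAL2} of this paper, no fiber-dimension machinery (Section \ref{Subsection2.2}) is needed here, precisely because \condCm\ --- in contrast to \condUm\ and \condWm\ --- is directly a determinantal condition; your proof correctly exploits this.

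One imprecision should be repaired. Your ``deterministic core'' requires only that $\mathbf C$ (resp.\ $\mathbf A$) have full \emph{rank} $\min(K,R)$, but the deterministic theorems of \cite{PartII} are phrased in terms of $k$-\emph{ranks} (compare condition \textup{(i)} of Proposition \ref{ProFullUniq1onematrixWm} and conditions \textup{(i)} of Propositions \ref{Prop:Indscal1}--\ref{Prop:Indscal2}). When $K<R$, a full-rank $K\times R$ matrix may still have $k_{\mathbf C}<K$, and then the inequality $k_{\mathbf A}+k_{\mathbf C}\geq R+2$ needed in Proposition \ref{Prop:Indscal1} can fail, so off your $V_1\cup V_2$ the deterministic theorem need not apply. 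The fix is trivial and leaves your structure intact: enlarge $V_1$ to the locus $\{k_{\mathbf C}<\min(K,R)\}$, which is a finite union (over column subsets) of vanishing loci of maximal minors, hence still a proper algebraic subset of measure zero. With that repair the chain closes: full column rank of $\mathcal C_{m_{\mathbf C}}(\mathbf A)\odot\mathcal C_{m_{\mathbf C}}(\mathbf A)$ forces every $m_{\mathbf C}$ columns of $\mathbf A$ to be linearly independent (a zero column in a compound matrix kills full column rank), i.e.\ $k_{\mathbf A}\geq m_{\mathbf C}$, which together with $k_{\mathbf C}=\min(K,R)$ gives $k_{\mathbf A}+k_{\mathbf C}\geq R+2$; the mode-permuted case is analogous.
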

\subsection{Contributions of the paper}
In this paper we present new generic uniqueness results for the CPD and SFS-CPD.
Based on deterministic conditions from \cite{PartI,PartII} (namely, Propositions \ref{ProFullUniq1onematrixWm}, \ref{Prop:Indscal1}, and \ref{Prop:Indscal2} further on)
we obtain  theoretically proven bounds on $R$.
\subsubsection{Results on generic uniqueness of the CPD}
The following result  complements the conditions for CPD in Proposition \ref{propositionalggeom}.
\begin{proposition}\label{Propositionunstructured}
 Let  
\begin{align}
2&\leq I\leq J\leq K\leq R,\label{eqmainboundWm0}\\
 R&\leq \frac{I+J+2K-2-\sqrt{(I-J)^2+4K}}{2},\label{eqmainboundWm}
 \end{align}
 or equivalently
 \begin{gather}
 m-1\leq I\leq J \leq K\leq R,\label{eq1newintro}\\
 R\leq (I+1-m)(J+1-m)+m-2,\label{eq3newintro}
 \end{gather}
 where $m=R-K+2$.
 Then the CPD
of an $I\times J\times K$ tensor of rank $R$ is generically unique.
\end{proposition}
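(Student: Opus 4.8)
The plan is to derive Proposition~\ref{Propositionunstructured} from the deterministic uniqueness condition~\condWm\ of Proposition~\ref{ProFullUniq1onematrixWm} by a genericity argument. The generic condition of Proposition~\ref{prop:generic propositionunstructured}(i) is by itself insufficient to reach \eqref{eqmainboundWm}: already for $I=J=5$, $K=R=16$ (so $m=2$) the matrix $\mathcal C_2(\mathbf A_0)\odot\mathcal C_2(\mathbf B_0)$ has size $100\times 120$ and cannot have full column rank, whereas \eqref{eq3newintro} still asserts generic uniqueness; so the sharper condition~\condWm\ is essential. First I would record that for a generic $\mathbf C\in\mathbb F^{K\times R}$ with $K\le R$ one has $r_{\mathbf C}=K$, so that the relevant index is $m=R-K+2$, exactly as in \eqref{eq1newintro}.

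Condition~\condWm\ asserts that a certain matrix $\mathbf\Phi(\mathbf A,\mathbf B,\mathbf C)$, whose entries are polynomials in the entries of $\mathbf A,\mathbf B,\mathbf C$, has full column rank; equivalently, that at least one of its maximal minors $f_1,\dots,f_N$ is nonzero. Each $f_i$ is a polynomial with coefficients that do not depend on whether $\mathbb F=\mathbb R$ or $\mathbb C$. Consequently the set where~\condWm\ fails is the common zero locus $\{f_1=\dots=f_N=0\}$, a Zariski-closed subset of the irreducible affine space $\mathbb F^{I\times R}\times\mathbb F^{J\times R}\times\mathbb F^{K\times R}$. If I can exhibit a single triple $(\mathbf A_0,\mathbf B_0,\mathbf C_0)$ at which~\condWm\ holds, then some $f_{i_0}\not\equiv 0$, so the failure set is contained in $\{f_{i_0}=0\}$ and hence has Lebesgue measure zero; since~\condWm\ implies uniqueness, the nonuniqueness set is contained in this null set. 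This is precisely the point where algebraic geometry reduces ``almost every triple'' to ``one triple''.

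Before constructing that triple I would make the bound transparent by checking \eqref{eqmainboundWm}$\iff$\eqref{eq3newintro}. Substituting $m=R-K+2$, i.e. $K=R-m+2$, into \eqref{eq3newintro} collapses it to the single inequality $K\le(I+1-m)(J+1-m)$. Writing $w:=m-1=R-K+1$ this reads $(I-w)(J-w)\ge K$; the largest admissible $w$ is the smaller root of $w^2-(I+J)w+(IJ-K)=0$, namely $w=\tfrac12\bigl(I+J-\sqrt{(I-J)^2+4K}\bigr)$, and $R=K+w-1$ then returns exactly the right-hand side of \eqref{eqmainboundWm}. The hypotheses \eqref{eqmainboundWm0}, \eqref{eq1newintro} ensure $m\ge 2$ and, in every nondegenerate case, $m\le I$, so that $\mathcal C_m(\mathbf A_0)$ is defined. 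This computation also isolates the genuinely binding constraint, $K\le(I+1-m)(J+1-m)$, which the example must saturate.

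The heart of the argument is therefore the construction of $(\mathbf A_0,\mathbf B_0,\mathbf C_0)$ certifying~\condWm\ at $K=(I+1-m)(J+1-m)$. Here I would put the columns of $\mathbf A_0$ and $\mathbf B_0$ on a rational normal (Vandermonde) curve, so that each $m\times m$ minor of $\mathbf A_0$ and of $\mathbf B_0$ factors into explicit, nonzero Vandermonde/Schur-type terms, and choose $\mathbf C_0$ of rank $K$ selecting a convenient family of monomials; one then orders the index sets so that $\mathbf\Phi(\mathbf A_0,\mathbf B_0,\mathbf C_0)$ acquires a (block-)triangular shape whose diagonal, and hence some maximal minor, is a nonzero product. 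The reason the construction works exactly up to the bound is that, after fixing the $m-1$ linear directions involved in the $m\times m$ minors, there remain precisely $(I+1-m)(J+1-m)$ residual index pairs available to accommodate the $K$ columns. I expect this final step---producing the specialization and proving that the chosen maximal minor does not vanish right up to $K=(I+1-m)(J+1-m)$, while ensuring $r_{\mathbf C_0}=K$ and that $\mathbf A_0,\mathbf B_0$ stay general enough that $m$ does not drop---to be the main obstacle; the genericity reduction and the bound manipulation above are routine by comparison.
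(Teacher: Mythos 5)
Your reduction of \eqref{eqmainboundWm} to the single inequality $K\le(I+1-m)(J+1-m)$ is correct, your choice of Proposition \ref{ProFullUniq1onematrixWm} as the deterministic basis matches the paper, and your observation that \condCm-type conditions (Proposition \ref{prop:generic propositionunstructured}) cannot reach the bound is exactly right. But your central step fails: condition \condWm\ is \emph{not} equivalent to some polynomial matrix $\mathbf\Phi(\mathbf A,\mathbf B,\mathbf C)$ having full column rank. By Definition \ref{Def:Wm}, \condWm\ is a universally quantified implication over the continuous parameter $[\lambda_1\ \dots\ \lambda_R]^T\in\textup{range}(\mathbf C^T)$; in compound-matrix language it says that the kernel of $\mathcal C_m(\mathbf A)\odot\mathcal C_m(\mathbf B)$ meets a certain \emph{non-linear} cone of structured vectors (the $m$-fold products of the entries of vectors in $\textup{range}(\mathbf C^T)$) only trivially. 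Hence the set where \condWm\ fails is the image under projection of the incidence set $\{(\mathbf A,\mathbf B,\mathbf C,\lambda):\ r_{\mathbf A\Diag{\lambda_1,\dots,\lambda_R}\mathbf B^T}\le m-1,\ \omega(\lambda_1,\dots,\lambda_R)\ge m,\ \lambda\in\textup{range}(\mathbf C^T)\}$, and projections of such locally closed sets are in general only constructible, not Zariski closed (compare $\{(a,x):\ ax=1\}$, whose projection $\{a\ne0\}$ is not closed). For a non-closed failure set, exhibiting one good triple $(\mathbf A_0,\mathbf B_0,\mathbf C_0)$ shows only that the failure set is not the whole space; it does not exclude that the failure set is dense, so it does not give measure zero. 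Your own $100\times120$ counting argument makes the problem concrete: for $I=J=5$, $K=R=16$ condition \condWm\ must hold generically while \emph{every} maximal minor of $\mathcal C_2(\mathbf A)\odot\mathcal C_2(\mathbf B)$ vanishes identically, so no minor of that matrix can serve as your $f_{i_0}$, and no alternative $\mathbf\Phi$ is ever constructed. This is precisely why the paper calls \condWm\ ``more relaxed but harder to use'': the one-random-example certification is valid for \condCm\ (a genuinely closed failure set) but not for \condWm.

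The paper closes this gap without any explicit example, by bounding the dimension of the \emph{closure} of the projection. It writes the failure set as $W=\pi(\widehat Z)$ with $\widehat Z$ an incidence variety in $(\mathbf A,\mathbf B,\mathbf C,\lambda,\widetilde{\mathbf A},\widetilde{\mathbf B})$, where the rank drop is encoded by $\mathbf A\Diag{\lambda_1,\dots,\lambda_R}\mathbf B^T=\widetilde{\mathbf A}\widetilde{\mathbf B}^T$; it covers $\widehat Z$ by rationally parametrized sets $Z_{u_1,\dots,u_I}$ of the form \eqref{eq:setZ} (Lemma \ref{lemma:2.8}), and then applies the fiber dimension theorem (Lemma \ref{Theorem:FDTh}): every fiber of $\pi$ contains $\{(\alpha\lambda_1,\dots,\alpha\lambda_R,\widehat{\mathbf A}\mathbf T,\alpha\widehat{\mathbf B}\mathbf T^{-T})\}$ and so has dimension at least $d=1+(m-1)^2$, whence $\dim\overline{\pi(Z_{1,\dots,I})}\le\dim Z_{1,\dots,I}-d\le (I+J+K)R-1$, the last inequality being exactly $K\le(I+1-m)(J+1-m)$, i.e.\ \eqref{eq3newintro}. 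So in the paper the bound enters as a fiber-dimension count, not as a count of residual index pairs in a Vandermonde specialization. To rescue your plan you would have to either prove that the \condWm-failure set is Zariski closed (there is no reason to expect this) or replace the one-example argument by such a dimension bound --- which is the paper's route. Secondary omissions: your Vandermonde construction is left unproven (and would be insufficient even if completed, by the above), and conditions \textup{(i)} and \textup{(iii)} of Proposition \ref{ProFullUniq1onematrixWm} are not addressed; these are the easy parts, handled respectively by generic $k$-ranks and by \cite{AlmostSure2001} together with Fubini's theorem.
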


Since it is often the case in applications that the largest dimension of a tensor  exceeds its rank,  we explicitly formulate the following special case of Proposition \ref{Propositionunstructured}.  
\begin{corollary}\label{Corollarynew} Let $3\leq I\leq J\leq R\leq K$ and $R\leq (I-1)(J-1)$. Then the CPD of an $I\times J\times K$ tensor of rank $R$ is generically unique.
\end{corollary}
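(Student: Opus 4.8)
The plan is to deduce Corollary \ref{Corollarynew} from Proposition \ref{Propositionunstructured} by passing from the ``tall'' regime $R\leq K$ to the boundary case $K=R$, which is exactly where Proposition \ref{Propositionunstructured} lives. Setting $K=R$ gives $m=R-K+2=2$, so the dimension constraint \eqref{eq1newintro} reduces to $1\leq I\leq J\leq R$ and the rank bound \eqref{eq3newintro} reduces to $R\leq(I-1)(J-1)$; both are furnished by the hypotheses $3\leq I\leq J\leq R$ and $R\leq(I-1)(J-1)$. Hence the entire content of the corollary is the reduction from a third dimension $K\geq R$ to $K=R$, and once this reduction is in place Proposition \ref{Propositionunstructured} supplies generic uniqueness of the resulting $I\times J\times R$ tensor.

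First I would fix a generic triple $(\mathbf A,\mathbf B,\mathbf C)\in\mathbb F^{I\times R}\times\mathbb F^{J\times R}\times\mathbb F^{K\times R}$ and discard two measure-zero sets: since $R\leq(I-1)(J-1)\leq IJ$ the Khatri--Rao product $\mathbf A\odot\mathbf B$ generically has full column rank $R$, and since $R\leq K$ the matrix $\mathbf C$ generically has full column rank $R$. Writing $\mathbf T_{(3)}$ for the mode-$3$ unfolding of $\mathcal T=[\mathbf A,\mathbf B,\mathbf C]_R$, we have $\mathbf T_{(3)}=\mathbf C(\mathbf A\odot\mathbf B)^{\top}$, so the full column rank of $\mathbf A\odot\mathbf B$ forces $\mathrm{col}(\mathbf T_{(3)})=\mathrm{col}(\mathbf C)=:V$ with $\dim V=R$.

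The key step is to show that the third factors of \emph{every} rank-$R$ polyadic decomposition of $\mathcal T$ lie in $V$. Indeed, any rank-$R$ PD $\mathcal T=[\mathbf A',\mathbf B',\mathbf C']_R$ yields $\mathbf T_{(3)}=\mathbf C'(\mathbf A'\odot\mathbf B')^{\top}$, whence $V=\mathrm{col}(\mathbf T_{(3)})\subseteq\mathrm{col}(\mathbf C')$; as $\mathbf C'$ has only $R$ columns and $\dim V=R$, this forces $\mathrm{col}(\mathbf C')=V$, so each column of $\mathbf C'$ lies in $V$. Consequently, multiplying in the third mode by a left inverse $\mathbf C^{+}$ of $\mathbf C$ (so that $\mathbf C^{+}\mathbf C=\mathbf I_R$) sends each rank-$R$ PD of $\mathcal T$ to a rank-$R$ PD of the compressed $I\times J\times R$ tensor $[\mathbf A,\mathbf B,\mathbf I_R]_R$, and multiplication by $\mathbf C$ inverts this map on the subspace $V$. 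This is a bijection between the rank-$R$ PDs of $\mathcal T$ and those of $[\mathbf A,\mathbf B,\mathbf I_R]_R$, so $\mathcal T$ has a unique CPD if and only if $[\mathbf A,\mathbf B,\mathbf I_R]_R$ does.

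Finally I would remove the spurious dependence on the non-generic factor $\mathbf I_R$: since multiplication in the third mode by an invertible $R\times R$ matrix is a bijection on rank-$R$ PDs that fixes $\mathbf A$ and $\mathbf B$, uniqueness of $[\mathbf A,\mathbf B,\mathbf I_R]_R$ is equivalent to uniqueness of $[\mathbf A,\mathbf B,\mathbf C_0]_R$ for every invertible $\mathbf C_0\in\mathbb F^{R\times R}$. By Fubini, generic uniqueness of the $I\times J\times R$ CPD (which Proposition \ref{Propositionunstructured} grants under $R\leq(I-1)(J-1)$, as noted above) implies that for generic $(\mathbf A,\mathbf B)$ a generic $\mathbf C_0$ gives a unique CPD, hence $[\mathbf A,\mathbf B,\mathbf I_R]_R$ has a unique CPD for generic $(\mathbf A,\mathbf B)$. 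Transporting this back through the bijection, $[\mathbf A,\mathbf B,\mathbf C]_R$ is unique off a null set, which is the assertion. I expect the main obstacle to be precisely the compression step: one must argue that the column-space identity $\mathrm{col}(\mathbf C')=V$ holds for \emph{competing} decompositions as well, so that mode-$3$ compression is a genuine bijection rather than a one-directional map, and one must reconcile the non-generic compressed factor $\mathbf I_R$ with the generic hypothesis of Proposition \ref{Propositionunstructured} via the invertible mode-$3$ symmetry.
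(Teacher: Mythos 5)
Your proposal is correct and follows essentially the route the paper intends: Corollary \ref{Corollarynew} is presented there as a direct specialization of Proposition \ref{Propositionunstructured} (take $K=R$, so $m=2$ and \eqref{eq3newintro} becomes $R\leq(I-1)(J-1)$), with the passage from $K=R$ to general $K\geq R$ left implicit as a triviality, exactly as the analogous statement for SFS-CPD is recorded without proof in Lemma \ref{lemma:trivial}. Your mode-3 compression bijection (using that $\mathbf A\odot\mathbf B$ and $\mathbf C$ generically have full column rank) and the Fubini/invertible-$\mathbf C_0$ step to replace the non-generic factor $\mathbf I_R$ are precisely the standard details that fill in that implicit reduction.
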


Corollary \ref{Corollarynew} improves the results of \cite[Propositions 5.2]{ChiantiniandOttaviani} and \cite[Theorem 2.7]{Strassen1983}
mentioned above (see \eqref{eq: Strassen}). Namely, the assumption $(I-1)(J-1)\leq K$ in 
\eqref{eq: Strassen} is relaxed as $R\leq K$ and  the statement on generic uniqueness holds both for $\mathbb F=\mathbb C$ and $\mathbb F=\mathbb R$.
It is also interesting to note that for  $\mathbb F=\mathbb C$ the decomposition is generically  not unique  if
$R>(I-1)(J-1)$  \cite[Proposition 2.2]{ChiantiniandOttaviani}. In the case where $\mathbb F = \mathbb C$,  Corollary \ref{Corollarynew} can also be obtained by combining \cite[Propositions 5.2]{ChiantiniandOttaviani} and \cite[Theorem 4.1]{Nick2014}.

Let us compare bound \eqref{eqmainboundWm} with Kruskal's bound \eqref{eqkruskalgeneric}, bound \eqref{eq: Strassen} and bounds from Propositions
\ref{propositionalggeom}--\ref{prop:generic propositionunstructured}. For $I\geq 3$, bound \eqref{eqmainboundWm} improves \eqref{eqkruskalgeneric} by 
$\frac{K-\sqrt{(I-J)^2+4K}}{2}$. For $R=K$, \eqref{eqmainboundWm} coincides with \eqref{eq: Strassen}. Using results of  \cite{PartI,PartII} one can show that
condition \eqref{eqmainboundWm} is more relaxed than Proposition \ref{prop:generic propositionunstructured}.
 In the following examples we present some cases 
where \eqref{eqmainboundWm} is more relaxed than any bound from  Propositions \ref{propositionalggeom}--\ref{prop:generic propositionunstructured} and compare bound \eqref{eqmainboundWm} with the bound in  Proposition \ref{propositionalggeom} \textup{(i)}.
\begin{example} 
By Proposition \ref{propositionalggeom} \textup{(iii)} or Proposition \ref{prop:generic propositionunstructured}, 
the CPD of a generic $4\times 5\times 6$ tensor is unique for $R\leq 6$ and by Proposition
\ref{Propositionunstructured} and \eqref{eq:Nick}, generic  uniqueness is guaranteed for $R\leq 7$ and $R\leq 9$, respectively. 
\end{example}
\begin{example} 
One can easily check that if $K=(I-2)(J-2)$, then the right-hand side of  \eqref{eqmainboundWm} is equal to $R=(I-2)(J-2)+1$.
Thus, by Proposition \ref{Propositionunstructured},
the CPD of an $I\times J\times (I-2)(J-2)$ tensor of rank $(I-2)(J-2)+1$ is generically unique.
In particular, the CPD of an $7\times 8\times 30$ tensor of rank $31$ is generically unique.
It can be shown that this result does not follow from Proposition \ref{propositionalggeom}. 
By \eqref{eq:Nick},  generic uniqueness holds for $R\leq 39$. On the other hand, for
increasing $I$ and $J$, bound \eqref{eq:Nick} becomes harder and harder to verify. For instance, to guarantee
that CPD of an $I\times J\times (I-2)(J-2)$ tensor of rank $(I-2)(J-2)+1$ is generically unique one should compute the kernel of an $IJ(I-2)(J-2)\times (I+J+(I-2)(J-2))((I-2)(J-2)+1)$ matrix, which quickly becomes infeasible  \cite{Nick2014}.
\end{example}
\begin{example} 
We compare the bound in Proposition \ref{propositionalggeom} \textup{(i)} with bound 
\eqref{eqmainboundWm} for $I\times I\times K$ tensors.
By formula manipulation it can easily be shown that if $I=J$ and $\min(9,I)\leq K$, then
the bound in Proposition \ref{propositionalggeom} \textup{(i)}
is more relaxed than  bound \eqref{eqmainboundWm}
if at least  $\frac{5}{2}+\sqrt{2K-\sqrt{K}+\frac{21}{4}}\leq I$
and that 
the bound \eqref{eqmainboundWm} 
is more relaxed than the bound in Proposition \ref{propositionalggeom} \textup{(i)}
if at least  $I\leq 2+\sqrt{2K-\sqrt{K}+3}$.
\end{example}
\subsubsection{Results on generic uniqueness of the SFS-CPD}
If either $R< I$ or $I\leq R< \min(K,2I-2)$ or $\max(I,K)\leq R\leq (2I+K-2)/2$,  then generic uniqueness of the SFS-CPD follows from \eqref{eqkruskalgenericindscal}. Other theoretical bounds on $R$ for the cases $2\leq I<R\leq K$, $2\leq I\leq K\leq R$, and $2\leq K\leq I<R$ are stated in Propositions  \ref{Proposition1.4}, \ref{PropositionINDSCAL}, and \ref{PropositionINDSCAL2}, respectively.
\begin{proposition}\label{Proposition1.4}
Let $4\leq I<R\leq K$ and $R\leq \frac{I^2-I}{2}$.
Then the  SFS-CPD of  an $I\times I\times K$ SFS-tensor of SFS-rank $R$ is generically unique.
\end{proposition}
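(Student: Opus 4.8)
The plan is to deduce the statement from the generic sufficient condition of Proposition~\ref{prop111overall:indscal}. Under the hypotheses $4\le I<R\le K$ we have $\min(K,R)=R$, hence $m_{\mathbf C}=R-\min(K,R)+2=2$, and the first alternative of Proposition~\ref{prop111overall:indscal} applies. It therefore suffices to exhibit a single matrix $\mathbf A_0\in\mathbb F^{I\times R}$ for which $\mathcal C_2(\mathbf A_0)\odot\mathcal C_2(\mathbf A_0)$ has full column rank (no $\mathbf C_0$ is needed, since this alternative does not involve it). Having full column rank is a Zariski-open condition on $\mathbf A_0$: the entries of $\mathcal C_2(\mathbf A_0)\odot\mathcal C_2(\mathbf A_0)$ are polynomials in the entries of $\mathbf A_0$, so each maximal $\binom R2\times\binom R2$ minor is a polynomial that is either identically zero or nonzero off a proper algebraic subset. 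Consequently it is enough to show that at least one such minor does not vanish identically, for which one may produce an explicit witness $\mathbf A_0$ or argue along a convenient specialization.

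Next I would reinterpret the condition geometrically. Writing $\mathbf a_1,\dots,\mathbf a_R$ for the columns of $\mathbf A_0$, the $\binom R2$ columns of $\mathcal C_2(\mathbf A_0)$ are exactly the decomposable bivectors $\mathbf a_p\wedge\mathbf a_q\in\wedge^2\mathbb F^I\cong\mathbb F^{\binom I2}$, $1\le p<q\le R$, and the corresponding columns of $\mathcal C_2(\mathbf A_0)\odot\mathcal C_2(\mathbf A_0)$ are their Kronecker squares $(\mathbf a_p\wedge\mathbf a_q)^{\otimes 2}$, which are symmetric tensors. Thus full column rank is equivalent to linear independence of these $\binom R2$ squares in $\mathrm{Sym}^2(\wedge^2\mathbb F^I)$, dually to the assertion that no nonzero quadratic form on $\wedge^2\mathbb F^I$ vanishes at all the points $\mathbf a_p\wedge\mathbf a_q$, i.e.\ that the $\binom R2$ Grassmann points $\mathbf a_p\wedge\mathbf a_q$ impose independent conditions on quadrics. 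The hypothesis $R\le\frac{I^2-I}{2}=\binom I2=\dim\wedge^2\mathbb F^I$ yields $\binom R2\le\binom{\binom I2}{2}$, so the required number of linear constraints is available. Note that for $R>I$ the bivectors $\mathbf a_p\wedge\mathbf a_q$ are themselves linearly dependent, so independence of the squares is a genuinely symmetric-square phenomenon and cannot be read off from the bivectors directly.

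The heart of the argument is to establish this independence for a well-chosen specialization of $\mathbf A_0$. My preferred route is to impose a moment structure $\mathbf a_r=(1,t_r,t_r^2,\dots,t_r^{I-1})^{\mathsf T}$ (if needed on a union of such curves, so as to accommodate $R$ up to $\binom I2$ columns), fix a monomial order on the basis bivectors $\mathbf e_i\wedge\mathbf e_j$, and track the leading terms of the squares $(\mathbf a_p\wedge\mathbf a_q)^{\otimes 2}$ viewed as polynomials in the $t_r$. If one can assign to the $\binom R2$ pairs $(p,q)$ pairwise distinct leading quadratic monomials, the corresponding $\binom R2\times\binom R2$ submatrix is triangular with nonzero diagonal, so its determinant is a nonzero polynomial in the $t_r$ and full column rank follows on the complement of its zero set. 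The bound $R\le\binom I2$ enters precisely here, as the number of available coordinate directions $\mathbf e_i\wedge\mathbf e_j$ is what makes enough distinct leading monomials available to separate all $\binom R2$ squares.

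The step I expect to be the main obstacle is the combinatorial bookkeeping just described: producing, under the sole constraint $R\le\binom I2$, an injective assignment of the $\binom R2$ index pairs to distinct leading bivector-monomials (equivalently, a proof that the special points $\mathbf a_p\wedge\mathbf a_q$ impose independent conditions on quadrics throughout this range). I would attempt this by induction on $R$, adjoining one column $\mathbf a_R$ at a time and showing that the $R-1$ newly created squares $(\mathbf a_p\wedge\mathbf a_R)^{\otimes 2}$ stay independent of those already obtained; the delicate point is controlling all $R-1$ new vectors simultaneously rather than one at a time. As induction base and sanity check I would treat the smallest case $I=4$ directly, where $\binom I2=6$ and $R\in\{5,6\}$, so that at most $\binom 62=15$ symmetric squares must be shown independent inside the $21$-dimensional space $\mathrm{Sym}^2(\mathbb F^6)$.
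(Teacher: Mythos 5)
Your reduction step is fine as far as it goes: under $4\le I<R\le K$ one has $m_{\mathbf C}=2$, and the first alternative of Proposition \ref{prop111overall:indscal} involves only $\mathbf A_0$, so it would indeed suffice to exhibit one $\mathbf A_0\in\mathbb F^{I\times R}$ with $\mathcal C_2(\mathbf A_0)\odot\mathcal C_2(\mathbf A_0)$ of full column rank. The fatal problem is that the statement you then set out to prove --- that such an $\mathbf A_0$ exists whenever $R\le\binom{I}{2}$ --- is false for every $I\ge 7$, so the ``combinatorial bookkeeping'' you flag as the main obstacle cannot be carried out by any construction. The obstruction is the Pl\"ucker relations, which your dimension count overlooks. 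Every column $(\mathbf a_p\wedge\mathbf a_q)^{\otimes2}$ is the symmetric square of a decomposable bivector, and every Pl\"ucker quadric (one for each $4$-element subset of $\{1,\dots,I\}$, giving $\binom{I}{4}$ linearly independent quadratic forms) vanishes on all decomposable bivectors. Consequently all $\binom{R}{2}$ columns lie in a fixed subspace of $\mathrm{Sym}^2\bigl(\wedge^2\mathbb F^I\bigr)$ of dimension
\begin{equation*}
\binom{\binom{I}{2}+1}{2}-\binom{I}{4}=\frac{I^2(I^2-1)}{12},
\end{equation*}
so full column rank forces $\binom{R}{2}\le I^2(I^2-1)/12$. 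For $R=\binom{I}{2}$ this inequality is equivalent to $I\le 6$: for instance with $I=7$, $R=K=21$ you would need $\binom{21}{2}=210$ independent columns inside a $196$-dimensional space. (Incidentally, your first dual reformulation --- ``no nonzero quadratic form vanishes at all the points $\mathbf a_p\wedge\mathbf a_q$'' --- is already falsified by the Pl\"ucker quadrics; the correct reformulation, imposing independent conditions on quadrics, is exactly what fails here.) The second alternative of Proposition \ref{prop111overall:indscal} cannot rescue this regime either: for $R>2I-2$ one has $m_{\mathbf A}=R-I+2>I$, so $\mathcal C_{m_{\mathbf A}}(\mathbf A_0)$ is not even defined. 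Thus Proposition \ref{Proposition1.4} is genuinely not a consequence of Proposition \ref{prop111overall:indscal}; and even in the unobstructed range ($I\le 6$, or $R$ small enough that $\binom{R}{2}\le I^2(I^2-1)/12$), your text contains only a proposed strategy, not a proof.

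The paper proves the statement by a completely different and much shorter route, which sidesteps exactly this obstruction. By the monotonicity observation of Lemma \ref{lemma:trivial} (generic uniqueness is preserved when the third dimension increases and the rank decreases), Proposition \ref{Proposition1.4} follows from Proposition \ref{PropositionINDSCAL} applied to an $I\times I\times K_1$ tensor with $K_1=R_1=\min\bigl(K,\tfrac{I^2-I}{2}\bigr)$. Proposition \ref{PropositionINDSCAL} itself is proved not via a compound-matrix full-rank condition but via the strictly weaker condition \condUm\ of Definition \ref{Def:Um} with $m=2$: one only needs that $r_{\mathbf A\Diag{\lambda_1,\dots,\lambda_R}\mathbf A^T}\le 1$ forces $\omega(\lambda_1,\dots,\lambda_R)\le 1$, which does not require linear independence of the squares $(\mathbf a_p\wedge\mathbf a_q)^{\otimes2}$ and continues to hold generically precisely in the regime where the Pl\"ucker relations make full column rank of $\mathcal C_2(\mathbf A_0)\odot\mathcal C_2(\mathbf A_0)$ impossible. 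If you want to repair your argument, it is this weaker condition (and the algebraic-geometric dimension count of Section \ref{Section4}), not Proposition \ref{prop111overall:indscal}, that you must target.
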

\begin{proposition}\label{PropositionINDSCAL}
Let
\begin{align}
2&\leq I\leq K\leq R,\nonumber\\ 
 R&\leq \frac{2I+2K+1-\sqrt{8K+8I+1}}{2},\label{eqIndscalIlessK}
 \end{align}
  or equivalently
  \begin{gather}
  	m-1\leq I\leq K\leq R,\nonumber 
  	\\
  	R\leq \frac{I^2+(3-2m)I}{2}+\frac{(m-1)(m-2)}{2}\label{indscal1fromintro2},
  \end{gather}
  where $m=R-K+2$.
 Then the SFS-CPD
of an $I\times I\times K$ tensor of rank $R$ is generically unique.
\end{proposition}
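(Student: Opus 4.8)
The plan is to follow the template used for Proposition~\ref{Propositionunstructured}, the essential new point being that the SFS hypothesis forces us onto the measure-zero locus $\mathbf B=\mathbf A$, on which the generic CPD statement of Proposition~\ref{Propositionunstructured} is silent and a dedicated argument is required. Because $I\le K\le R$ we have $\min(K,R)=K$, so the relevant index is $m=R-K+2=m_{\mathbf C}$. I would first record that the two displayed forms of the bound describe the same region: writing $K=R-m+2$ and squaring away the radical in \eqref{eqIndscalIlessK} reduces it to \eqref{indscal1fromintro2}, and a one-line rearrangement puts the latter in the compact shape $R\le\binom{I-m+2}{2}$. This compact form is what I would use throughout; in the base case $m=2$ (i.e.\ $R=K$) it reads simply $R\le\binom I2$.

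By Proposition~\ref{prop111overall:indscal}, which in the present regime is underwritten by the deterministic condition of Proposition~\ref{Prop:Indscal1}, the SFS-CPD of $[\mathbf A,\mathbf A,\mathbf C]_R$ is generically unique as soon as there is a \emph{single} matrix $\mathbf A_0\in\mathbb F^{I\times R}$ for which the symmetric Khatri--Rao product $\mathcal C_m(\mathbf A_0)\odot\mathcal C_m(\mathbf A_0)$ has full column rank. Full column rank is a Zariski-open condition in the entries of $\mathbf A_0$---its failure forces every maximal minor of $\mathcal C_m(\mathbf A_0)\odot\mathcal C_m(\mathbf A_0)$ to vanish---so one witness already confines the bad set to a proper algebraic subset, hence to a Lebesgue-null set. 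The whole proposition therefore reduces to producing one such witness under the hypothesis $R\le\binom{I-m+2}{2}$.

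To exhibit the witness I would exploit the explicit description of the columns. The column indexed by an $m$-subset $S\subseteq\{1,\dots,R\}$ equals $\mathbf c_S\otimes\mathbf c_S$, where $\mathbf c_S\in\mathbb F^{\binom Im}$ collects the $m\times m$ minors of $\mathbf A_0$ on the column set $S$---that is, the Pl\"ucker coordinates of the span of those $m$ columns. All columns thus lie in the symmetric subspace $\mathrm{Sym}^2(\mathbb F^{\binom Im})$, and full column rank is equivalent to linear independence of the $\binom Rm$ rank-one symmetric matrices $\mathbf c_S\mathbf c_S^{\mathsf T}$. I would take $\mathbf A_0$ of a structured (Vandermonde- or block-type) form for which these minors are computable in closed form, and verify independence by locating one nonvanishing maximal minor of $\mathcal C_m(\mathbf A_0)\odot\mathcal C_m(\mathbf A_0)$; the numerology $R\le\binom{I-m+2}{2}$ is exactly what makes such a minor available.

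The main obstacle is this rank verification, which is decidedly harder than its unstructured counterpart. In the unstructured proof the two factors $\mathbf A_0,\mathbf B_0$ move independently and the columns $\mathbf c^{\mathbf A}_S\otimes\mathbf c^{\mathbf B}_S$ range over a full tensor-product space; here the two factors coincide, confining every column to $\mathrm{Sym}^2(\mathbb F^{\binom Im})$, and, worse, the $\mathbf c_S$ are not arbitrary---they sit on the cone over the Pl\"ucker-embedded Grassmannian, so the $\mathbf c_S\otimes\mathbf c_S$ are special points of $\mathrm{Sym}^2$ rather than generic ones. It is precisely this symmetric self-coupling that both dictates the threshold $\binom{I-m+2}{2}$ and costs roughly a factor $\tfrac12$ relative to the $I=J$ specialization of Proposition~\ref{Propositionunstructured}. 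I would organize the verification as an induction on $I$, peeling off a row of $\mathbf A_0$ together with a compatible block of columns to descend to a strictly smaller instance, anchored by the explicit $m=2$ case $R\le\binom I2$. Once the witness is in hand, Zariski-openness together with the equivalence of the two bound forms closes the argument.
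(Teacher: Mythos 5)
Your proposal founders on its very first reduction. You propose to prove the proposition by exhibiting a single witness $\mathbf A_0$ for which $\mathcal C_m(\mathbf A_0)\odot\mathcal C_m(\mathbf A_0)$ has full column rank and then invoking Proposition~\ref{prop111overall:indscal}. But Proposition~\ref{prop111overall:indscal} is \emph{strictly weaker} than the statement to be proved: the paper notes that bound \eqref{eqIndscalIlessK} is more relaxed than the bound obtainable from Proposition~\ref{prop111overall:indscal} for all pairs $2\le I\le K$ except $(I,K)\in\{(2,2),(3,4),(4,4),(5,6),(6,6),(8,8)\}$. Concretely, take $(I,K,R)=(8,20,21)$, so $m=R-K+2=3$ and $R=21=\binom{I-m+2}{2}$ sits exactly at your threshold. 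Since full column rank of $\mathcal C_m(\mathbf A_0)\odot\mathcal C_m(\mathbf A_0)$ is a Zariski-open condition in $\mathbf A_0$, it either holds for generic $\mathbf A_0$ or fails for \emph{every} $\mathbf A_0$; and it is known (see the $8\times 8\times 20$ example in the paper, which cites \cite[Example 6.14]{PartII}) that Proposition~\ref{prop111overall:indscal} caps out at $R\le 20$ here, i.e.\ the rank condition fails identically at $R=21$. So no witness exists, structured or not, and your claim that ``the numerology $R\le\binom{I-m+2}{2}$ is exactly what makes such a minor available'' is false: the columns $\mathbf c_S\otimes\mathbf c_S$ satisfy linear dependencies beyond merely lying in the symmetric subspace (the Pl\"ucker/Grassmannian structure you yourself flag as a difficulty actually kills the rank), which is precisely why the \condCm-based generic bound is lower than \eqref{eqIndscalIlessK}. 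No amount of Vandermonde choices or induction on $I$ can repair this; the approach can at best recover the older, weaker bound.

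The paper's proof necessarily works at the level of the more relaxed condition \condUm\ (Proposition~\ref{Prop:Indscal1}), whose failure set is not the zero locus of minors of a single matrix built from $\mathbf A$. It shows that
\begin{equation*}
W=\{(\mathbf A,\mathbf C):\ \text{\condUmC\ fails for } (\mathbf A,\mathbf A),\ \text{all minors of } \mathbf A \text{ nonzero},\ k_{\mathbf C}=K\}
\end{equation*}
has dimension at most $IR+KR-1$ by Procedure~\ref{proc}: the failure is parameterized by $\mathbf A\Diag{\lambda_1,\dots,\lambda_R}\mathbf A^T=\widetilde{\mathbf A}\widetilde{\mathbf A}^T$ with $\omega(\lambda_1,\dots,\lambda_R)\ge I$; the nonlinearity in $\mathbf A$ (which blocks a direct rational parameterization) is resolved by introducing an LDU factorization of an $I\times I$ column submatrix of $\mathbf A$, turning the constraint into a UDU decomposition whose factors depend rationally on the remaining parameters; and the key dimension gain comes from the fiber of the projection, which contains $\{\alpha\widehat{\mathbf A}\mathbf T:\ \alpha\ne 0,\ \mathbf T\mathbf T^T=\mathbf I\}$ and hence has dimension at least $\frac{(m-1)(m-2)}{2}+1$ (Lemma~\ref{lemma:moregencomport}); this is exactly the quantity $\frac{(m-1)(m-2)}{2}$ appearing in \eqref{indscal1fromintro2}. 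If you want to prove the proposition, this dimension-counting route (or something equivalent to it) is unavoidable; a witness-based openness argument cannot reach the stated bound.
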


\begin{proposition}\label{PropositionINDSCAL2}
Let 
 \begin{align}
  2&\leq K\leq I\leq R,\label{eqIndscalKlessI1}\\
  R&\leq \frac{K+3I-1-\sqrt{(K-I)^2+2K+6I-3}}{2},\label{eqIndscalKlessI}
  \end{align}
   or equivalently
  \begin{gather}
  m-1\leq K\leq I\leq R,\nonumber\\ 
  R\leq (I+1-m)(K+1-m),\label{eq3new222intro}
  \end{gather}
   where $m=R-I+2$.
  Then the SFS-CPD
of an $I\times I\times K$ tensor of rank $R$ is generically unique.
\end{proposition}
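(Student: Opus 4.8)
The plan is to derive Proposition \ref{PropositionINDSCAL2} from the deterministic SFS-uniqueness condition of Proposition \ref{Prop:Indscal2} together with a standard genericity argument, exactly as in the proofs of Propositions \ref{Propositionunstructured}--\ref{PropositionINDSCAL}. Since $K\le I\le R$, the largest mode dimension is $I$, so the natural pivot is the $\mathbf A$-mode; then $\min(I,R)=I$ and the relevant compound order is $m=R-I+2=m_{\mathbf A}$, matching the second option of the existing Proposition \ref{prop111overall:indscal}. Proposition \ref{Prop:Indscal2} guarantees that the SFS-CPD of $[\mathbf A,\mathbf A,\mathbf C]_R$ is unique as soon as a certain matrix $\mathbf M(\mathbf A,\mathbf C)$, built from the $m$-th compound matrices $\mathcal C_m(\mathbf A)$ and $\mathcal C_m(\mathbf C)$, has full column rank. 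One first notes that the hypotheses $m-1\le K\le I\le R$ together with $R\le(I+1-m)(K+1-m)$ force both factors $I+1-m$ and $K+1-m$ to be strictly positive (if $K+1-m=0$ the right-hand side would vanish), hence $m\le K\le I$, so that both $\mathcal C_m(\mathbf A)$ and $\mathcal C_m(\mathbf C)$ are well defined.

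The entries of $\mathbf M(\mathbf A,\mathbf C)$ are polynomials in the entries of $\mathbf A$ and $\mathbf C$. Hence the set of pairs $(\mathbf A,\mathbf C)\in\mathbb F^{I\times R}\times\mathbb F^{K\times R}$ for which $\mathbf M$ fails to have full column rank is the common zero set of its maximal minors, a Zariski-closed set, which is therefore either everything or a proper algebraic subset of Lebesgue measure zero. Thus it suffices to exhibit a single pair $(\mathbf A_0,\mathbf C_0)$ for which $\mathbf M(\mathbf A_0,\mathbf C_0)$ has full column rank: one such example makes one maximal minor a non-identically-zero polynomial, and the failure locus is contained in its zero set. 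The symmetry of the INDSCAL model imposes no extra constraint at this stage, since $\mathbf A$ and $\mathbf C$ are now independent parameters; the SFS structure has already been absorbed into the deterministic condition.

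The heart of the matter is the construction of such a pair under the hypothesis $R\le(I+1-m)(K+1-m)$. The quantity $(I+1-m)(K+1-m)$ is the number of nodes of a rectangular $(I+1-m)\times(K+1-m)$ grid, and the hypothesis says precisely that such a grid can accommodate the $R$ columns of $\mathbf A_0$ and $\mathbf C_0$. I would therefore take $\mathbf A_0$ and $\mathbf C_0$ to have Vandermonde-type columns associated with $R$ nodes placed on such a grid; the $m\times m$ minors entering $\mathcal C_m(\mathbf A_0)$ and $\mathcal C_m(\mathbf C_0)$ then become generalized Vandermonde (Schur-type) expressions in the grid coordinates, and one can try to order the row and column multi-indices so that $\mathbf M(\mathbf A_0,\mathbf C_0)$ becomes block triangular with non-singular diagonal blocks, whence full column rank would follow from the non-vanishing of a product of Vandermonde-type determinants. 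Alternatively, one may invoke directly the generic-rank results for Khatri--Rao products of compound matrices established in \cite{PartI,PartII}. I expect this full-rank verification to be the main obstacle: one must show that the selected configuration makes the $\binom{R}{m}$ columns of $\mathbf M$ linearly independent, which is where the bound $R\le(I+1-m)(K+1-m)$ enters combinatorially rather than as a mere row/column count (the naive inequality $\binom{I}{m}\binom{K}{m}\ge\binom{R}{m}$ being far weaker), and where the interchange of the second and third modes relative to the unstructured Proposition \ref{Propositionunstructured} must be handled with care.

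Finally, the equivalence of the two forms of the bound is routine. Substituting $m=R-I+2$ gives $I+1-m=2I-R-1$ and $K+1-m=K+I-R-1$, so that $R\le(I+1-m)(K+1-m)$ rearranges to the quadratic inequality $R^2-(3I+K-1)R+(2I-1)(K+I-1)\ge 0$. Its discriminant equals $(K-I)^2+2K+6I-3$, and the relevant (smaller) root is $\tfrac12\bigl(K+3I-1-\sqrt{(K-I)^2+2K+6I-3}\bigr)$, which is exactly the right-hand side of \eqref{eqIndscalKlessI}; the side conditions \eqref{eqIndscalKlessI1} coincide with $m-1\le K\le I\le R$ under the same substitution.
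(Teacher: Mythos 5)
Your reduction misreads the deterministic condition, and this is fatal to the strategy. Condition (ii) of Proposition \ref{Prop:Indscal2} is condition \condUmA\ for the pair $(\mathbf A,\mathbf C)$ (Definition \ref{Def:Um}): whenever $r_{\mathbf A\Diag{\lambda_1,\dots,\lambda_R}\mathbf C^T}\leq m-1$ one must have $\omega(\lambda_1,\dots,\lambda_R)\leq m-1$. This is \emph{not} the condition that $\mathcal C_{m_{\mathbf A}}(\mathbf A)\odot\mathcal C_{m_{\mathbf A}}(\mathbf C)$ has full column rank; that full-rank requirement is the stronger \condCm-type condition, which demands a trivial kernel, whereas \condUm\ only forbids kernel vectors of the special form $(\lambda_{i_1}\cdots\lambda_{i_m})_{i_1<\dots<i_m}$ with $\omega(\lambda_1,\dots,\lambda_R)\geq m$. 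Consequently, the set of pairs $(\mathbf A,\mathbf C)$ where \condUm\ fails is not the zero locus of polynomials in the entries of $\mathbf A$ and $\mathbf C$: it is the \emph{projection} of a set involving auxiliary unknowns $\lambda_1,\dots,\lambda_R$, $\widetilde{\mathbf A}\in\mathbb C^{I\times(m-1)}$, $\widetilde{\mathbf C}\in\mathbb C^{K\times(m-1)}$ satisfying $\mathbf A\Diag{\lambda_1,\dots,\lambda_R}\mathbf C^T=\widetilde{\mathbf A}\widetilde{\mathbf C}^T$ with $\omega(\lambda_1,\dots,\lambda_R)\geq m$. Your ``Zariski-closed, hence one good example suffices'' argument therefore does not apply to the condition you actually need. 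This is precisely why the paper invokes Procedure \ref{proc}: it rationally parameterizes this bad set, and bounds the dimension of its projection by the fiber dimension theorem (Lemma \ref{Theorem:FDTh}), the fibers having dimension at least $d=1+(m-1)^2$ thanks to the action $(\lambda_i,\widetilde{\mathbf A},\widetilde{\mathbf C})\mapsto(\alpha\lambda_i,\widehat{\mathbf A}\mathbf T,\alpha\widehat{\mathbf C}\mathbf T^{-T})$; the hypothesis \eqref{eq3new222intro} enters exactly at that final count to force $\dim W\leq IR+KR-1$.

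The gap cannot be repaired within your framework. Generic full column rank of $\mathcal C_{m_{\mathbf A}}(\mathbf A)\odot\mathcal C_{m_{\mathbf A}}(\mathbf C)$ is exactly the content of the known Proposition \ref{prop111overall:indscal}, and the paper states that for $\min(I,K)\geq 3$ bound \eqref{eqIndscalKlessI} is more relaxed than the bound in Proposition \ref{prop111overall:indscal}. Hence in the very cases that make Proposition \ref{PropositionINDSCAL2} new, \emph{no} pair $(\mathbf A_0,\mathbf C_0)$ --- Vandermonde grid or otherwise --- makes that Khatri--Rao product of compound matrices have full column rank, so the construction you defer as ``the main obstacle'' is not merely hard but impossible there; your route can at best reprove the old result. (Two smaller issues: you never verify condition (i) of Proposition \ref{Prop:Indscal2}, which the paper checks via the arithmetic $R+2\leq I+K$; and your full-rank verification is in any case only sketched.) The one part that is correct is your closing computation: the equivalence of \eqref{eqIndscalKlessI} with \eqref{eq3new222intro}, including the discriminant $(K-I)^2+2K+6I-3$, matches the paper.
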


Using results of  \cite{PartI,PartII} one can show that for $\min(I,K)\geq 3$, 
bound \eqref{eqIndscalKlessI} is more relaxed than the bound in
Proposition \ref{prop111overall:indscal}, which is known \cite{PartI,PartII} to be more relaxed than Kruskal's
condition \eqref{eqkruskalgenericindscal}. It can also be shown that for $2\leq I\leq K\leq R$, bound \eqref{eqIndscalIlessK}
is more relaxed than bound in Proposition \ref{prop111overall:indscal} (and hence \eqref{eqkruskalgenericindscal}) in all cases except 
$(I,K)\in\{(2,2), (3,4), (4,4), (5,6), (6,6), (8,8)\}$.
 
\begin{example} 
Kruskal's condition \eqref{eqkruskalgenericindscal} and Proposition \ref{prop111overall:indscal}
guarantee that the SFS-CPD
of an $8\times 8\times 20$ tensor of rank $R$ is generically unique for $R\leq 14$ and $R\leq 20$, respectively \cite[Example 6.14]{PartII}. 
By Proposition \ref{PropositionINDSCAL}, uniqueness  holds also for $R=21$. More generally, if $I\geq 5$, then
by Proposition \ref{PropositionINDSCAL}, the SFS-CPD of an $I\times I\times \frac{I^2-3I}{2}$ tensor of rank $\frac{I^2-3I}{2}+1$ is generically unique.
\end{example}

\subsection{Organization of the paper}
A number of deterministic conditions  for uniqueness of the CPD and SFS-CPD have been obtained in \cite{PartII}.
The main part of the theory in \cite{PartII} was built around conditions that were denoted as \condKm, \condCm, \condUm, and \condWm\ (each succeeding condition is more relaxed than the preceding one, but harder to use). It was shown that 
 condition \eqref{eqkruskalgeneric} and  the conditions in Propositions \ref{prop:generic propositionunstructured}, \ref{prop111overall:indscal}
are generic versions  of the \condKm\ and  \condCm\ based deterministic conditions, respectively. In this paper we  obtain
generic versions of the \condUm\ and  \condWm\ based deterministic conditions from \cite{PartII}. We proceed as follows.
In the first part of Section \ref{Section3} we recall the \condWm\ based deterministic condition for uniqueness of the CPD (Proposition \ref{ProFullUniq1onematrixWm}) and in the first parts of  Sections  \ref{Section4} and \ref{Section5} we derive two \condUm\ based conditions for uniqueness of the SFS-CPD (Propositions \ref{Prop:Indscal1} and \ref{Prop:Indscal2}, respectively).
Then in the second parts of Sections \ref{Section3}, \ref{Section4}, \ref{Section5}  we interpret  the conditions in Propositions 
\ref{Propositionunstructured}, \ref{PropositionINDSCAL}, and \ref{PropositionINDSCAL2} as generic versions of the deterministic Propositions \ref{ProFullUniq1onematrixWm}, \ref{Prop:Indscal1}, and \ref{Prop:Indscal2}, respectively.
Proposition \ref{Proposition1.4} is derived from Proposition  \ref{PropositionINDSCAL} in Section \ref{Sectionformercor}.
Our derivations make use of algebraic geometry. 
Section \ref{Subsection2.2} contains relevant basic  definitions and results.
In Subsection \ref{subsubsection2.2.3}
we summarize these results in a procedure  that may be used in different applications to study generic conditions. 
Although algebraic geometry based approaches have appeared in, e.g., \cite{Bocci2013,ChiantiniandOttaviani,Nick2014, Strassen1983},
the power of the algebraic geometry framework is not yet fully acknowledged in mathematical engineering.
We hope that our paper has some tutorial value in this respect.

\section{Auxiliary results from algebraic geometry}\label{Subsection2.2}
This section is provided to make the paper accessible for readers not familiar with
algebraic geometry. We present a well-known algebraic geometry based method to prove that a set $W\subset\mathbb F^l$
has measure zero, $\mu_{\mathbb F}(W)=0$. For $\mathbb F=\mathbb C$ we introduce the notion of dimension of $W$, $\dim W$, and explain how to compute it.
We show that if $\dim W<l$, then $\mu_{\mathbb C}(W)=0$. The method is summarized and illustrated in Subsections \ref{subsection:zeromeausure} and
\ref{subsection:example}, respectively. In Subsection \ref{subsection:zeromeausure} we also explain that the case $\mathbb F=\mathbb R$ can be reduced to the case $\mathbb F=\mathbb C$.
 In this paper, to prove Proposition \ref{Propositionunstructured} and Propositions
\ref{PropositionINDSCAL}--\ref{PropositionINDSCAL2},
 we will use the method for
\begin{equation*} 
W=\{(\mathbf A,\mathbf B,\mathbf C):\ \text{the CPD of }\ [\mathbf A,\mathbf B,\mathbf C]_R\ \text{ is not unique}\} \subset\mathbb F^{I\times R}\times\mathbb F^{J\times R}\times\mathbb F^{K\times R}
\end{equation*}
($W$ can be considered as a subset of $\mathbb F^l$ with $l=(I+J+K)R$) and
\begin{equation*} 
W=\{(\mathbf A,\mathbf C):\ \text{the SFS-CPD }  \text{of }\ [\mathbf A,\mathbf A,\mathbf C]_R
\text{ is not unique}\} \subset\mathbb F^{I\times R}\times\mathbb F^{K\times R},
\end{equation*}
($W$ can be considered as a subset of $\mathbb F^l$ with $l=(I+K)R$), respectively.
\subsection{Zariski topology} 
A subset $X\subset \mathbb C^n$ is {\em Zariski closed} if there is a set of polynomials $p_1(z_1,\dots,z_n),\dots,p_k(z_1,\dots,z_n)$
such that 
\begin{equation}\label{eq:setX}
X=\{(z_1,\dots,z_n):\ p_1(z_1,\dots,z_n)=0,\dots, p_k(z_1,\dots,z_n)=0\}.
\end{equation}
 A subset $Y\subset \mathbb C^n$ is {\em Zariski open}
if its complement in $\mathbb C^n$ is Zariski closed. A subset $Z \subset\mathbb C^n$ is {\em Zariski locally closed} if it equals  the intersection of an open and a closed subset. {\em The Zariski closure} $\overline W$ of $W\subset\mathbb C^n$  is the smallest closed set such that $W\subset\overline W$. For instance, the set 
$$
Y=\{(z_1,\dots,z_n):\ q_1(z_1,\dots,z_n)\ne 0,\dots, q_l(z_1,\dots,z_n)\ne 0\}.
$$
is Zariski open and the set
\begin{equation*}
\begin{split}
Z=Y\cap X=\{(z_1,\dots,z_n):\ & p_1(z_1,\dots,z_n)=0,\dots, p_k(z_1,\dots,z_n)=0,\\
& q_1(z_1,\dots,z_n)\ne 0,\dots, q_l(z_1,\dots,z_n)\ne 0\}.
\end{split}
\end{equation*}
is Zariski locally closed.
If $W=(0,1)\subset \mathbb C^1$, then the closure of $W$ in the classical Euclidean topology is $[0,1]$ and the closure in the Zariski topology is
the entire $\mathbb C^1$. Indeed, if $\overline{W}=\{z:\ p_1(z)=\dots=p_k(z)=0\}\supset (0,1)$, then $p_i=0$. Hence, $\overline{W}=\mathbb C^1$. 

In the sequel we will consider closed and open subsets only in Zariski topology and, for brevity,  we drop the term ``Zariski''. 
The following lemma follows easily from the above  definitions.
\begin{lemma}\label{Lemma2.1}
\begin{itemize}
\item[\textup{(i)}]
The empty set $\empty$ and the whole space $\mathbb C^n$ are the only subsets of $\mathbb C^n$ that are both open and
closed.
\item[\textup{(ii)}]
Let $Y$ be an open subset of $\mathbb C^n$. Then $\overline Y=\mathbb C^n$.
\end{itemize}
\end{lemma}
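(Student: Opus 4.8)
The plan is to reduce both parts to a single algebraic fact: the ring $\mathbb{C}[z_1,\dots,z_n]$ is an integral domain, so a product of two nonzero polynomials is again nonzero. From this I would extract the topological core, namely that any two \emph{nonempty} open subsets of $\mathbb{C}^n$ meet (the irreducibility of $\mathbb{C}^n$ in the Zariski topology); the two assertions of the lemma then follow by formal point-set manipulation.

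First I would prove the intersection claim. Given nonempty open $Y_1,Y_2\subset\mathbb{C}^n$, I write their closed complements as zero sets of polynomial families as in \eqref{eq:setX}, say $\mathbb{C}^n\setminus Y_i=\{p^{(i)}_1=\dots=p^{(i)}_{k_i}=0\}$. Since each $Y_i$ is nonempty, at least one polynomial in each family, say $p^{(1)}_s$ and $p^{(2)}_t$, is not identically zero. Their product is a nonzero polynomial, hence does not vanish everywhere, so I can pick a point $z$ at which $p^{(1)}_s(z)\ne 0$ and $p^{(2)}_t(z)\ne 0$; such a $z$ lies outside both complements, i.e. $z\in Y_1\cap Y_2$.

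Granting the intersection claim, part (i) is immediate: if $S$ were clopen with $S\ne\emptyset$ and $S\ne\mathbb{C}^n$, then $S$ and $\mathbb{C}^n\setminus S$ would be disjoint nonempty open sets, a contradiction, leaving only $\emptyset$ and $\mathbb{C}^n$. For part (ii) I take $Y$ nonempty (for $Y=\emptyset$ the conclusion fails, so nonemptiness is understood): since $\overline Y$ is closed and contains $Y$, the set $\mathbb{C}^n\setminus\overline Y$ is open and disjoint from $Y$, so by the intersection claim it must be empty, giving $\overline Y=\mathbb{C}^n$. The only genuinely nontrivial ingredient is the integral-domain property invoked in the intersection claim, and I expect it to be the sole obstacle; it rests on $\mathbb{C}$ being an infinite field, and once it is available every subsequent deduction is purely topological. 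I would therefore flag it explicitly rather than bury it in ``follows from the definitions.''
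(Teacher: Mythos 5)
Your proof is correct. For comparison: the paper offers no proof at all---Lemma \ref{Lemma2.1} is prefaced only by the remark that it ``follows easily from the above definitions,'' with the one-variable computation $\overline{(0,1)}=\mathbb C^1$ as the sole hint. Your proposal supplies exactly the missing content, and via the right key fact: the irreducibility of $\mathbb C^n$ in the Zariski topology (any two nonempty open sets meet), deduced from the integral-domain property of $\mathbb C[z_1,\dots,z_n]$ together with the fact that a nonzero polynomial over an infinite field cannot vanish identically; parts (i) and (ii) then follow by formal complementation, as you show. Two observations in your write-up genuinely sharpen the paper's statement. First, you isolate the non-formal ingredient: the lemma is \emph{not} a consequence of the topological definitions alone (in a general topological space both assertions fail), so flagging the algebraic input is the honest thing to do. Second, you note that (ii) requires $Y\ne\emptyset$, a hypothesis the paper omits: since $\emptyset$ is Zariski closed (the zero set of the constant polynomial $1$), one has $\overline{\emptyset}=\emptyset\ne\mathbb C^n$. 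This omission is harmless where the paper applies the lemma (Subsection \ref{subsubsection2.2.1}), because there $Y$ is cut out by conditions $q_i\ne 0$ with the $q_i$ nonzero polynomials, hence $Y$ is nonempty by the very same integral-domain argument you use---but it is a real, if minor, gap in the statement as printed, and your version closes it.
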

\subsection{Dimension of a subset} \label{subsubsecdimset}
With an arbitrary subset $W\subset \mathbb C^n$ one can associate a number, called {\em the dimension of } $W$, $\dim W\in\{0,1,\dots,n\}$  such that $\overline{W}\subsetneq \mathbb C^n$ if and only if $\dim W<n$. In this subsection we give a definition and discuss how the dimension can be computed.

A closed subset $X$ is {\em reducible} if it is the union of two smaller closed subsets $X_1$ and $X_2$, $X= X_1\cup X_2$.
A closed subset $X$ is {\em irreducible} if it is not reducible.
For instance, the subset $X:=\{(z_1,z_2):\ z_1z_2=0\}\subset \mathbb C^2$ is reducible since
$X= X_1\cup X_2$ with $X_i:=\{(z_1,z_2):\ z_i=0\}$; both $X_1$ and $X_2$ are irreducible.

{\em The (topological) dimension} of a subset $W\subset \mathbb C^n$ is the largest  integer $d$ such that there exists a chain
 $X_0\subsetneq X_1\subsetneq\dots\subsetneq X_d\subset \overline{W}$  of distinct irreducible closed subsets of $\overline{W}$. It can be proved that  such $d$ always exists and that $d\leq n$. Since the closure of $\overline{W}$ coincides with $\overline{W}$, it follows immediately from the definition of dimension that $\dim W=\dim\overline{W}$.
 The following properties of the dimension are well-known in algebraic geometry. 
\begin{lemma}\label{lemma:propalggeom}
\begin{itemize}
\item[\textup{(i)}]
Let $W_1\subset W_2\subset\mathbb C^n$, then $\dim W_1\leq\dim W_2\leq n$.
\item[\textup{(ii)}]
Let $W\subset\mathbb C^n$. Then  $\dim W= n$ if and only if $\overline W=\mathbb C^n$.
\item[\textup{(iii)}]
 Let $W\subset W_1\times W_2$ and $\pi_i$ be the projection 
$W_1\times W_2\rightarrow  W_i$, $i=1,2$.  Then 
$$
\max(\dim \pi_1(W),\ \dim \pi_2(W))\leq\ \dim W\leq \dim \pi_1(W)+\dim \pi_2(W).
$$
\item[\textup{(iv)}] Let $W=W_1\cup \dots\cup W_k$. Then $\dim W=\max(\dim W_1,\dots,\dim W_k)$.
\end{itemize}
\end{lemma}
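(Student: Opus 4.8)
The plan is to derive all four items directly from the chain definition of dimension, importing only a short list of standard facts: that $\mathbb C^n$ is irreducible with $\dim\mathbb C^n=n$, that a proper irreducible closed subset of an irreducible closed set has strictly smaller dimension, that dimension does not increase under polynomial maps, and that $\dim(X\times Y)=\dim X+\dim Y$ for closed sets; together with two purely topological observations, namely that closure is monotone and commutes with finite unions, and that an irreducible set contained in a finite union of closed sets lies entirely in one of them. Granting these, item \textup{(i)} is immediate: $W_1\subset W_2$ gives $\overline{W_1}\subset\overline{W_2}$, so every chain of irreducible closed subsets inside $\overline{W_1}$ is also such a chain inside $\overline{W_2}$; hence the supremum defining $\dim W_1$ is taken over a subfamily of the chains defining $\dim W_2$, giving $\dim W_1\le\dim W_2$, while $\dim W_2\le n$ is the bound on chain length in $\mathbb C^n$ already recorded with the definition.

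Item \textup{(iv)} runs the same idea in two directions. The inequality $\max_i\dim W_i\le\dim\bigcup_j W_j$ follows from \textup{(i)} since each $W_i\subset\bigcup_j W_j$. For the reverse, I would use $\overline{\bigcup_i W_i}=\bigcup_i\overline{W_i}$ and choose a chain $X_0\subsetneq\dots\subsetneq X_d$ inside this union realizing $d=\dim\bigcup_i W_i$. Because $X_d$ is irreducible and contained in the finite union of closed sets $\overline{W_i}$, it must be contained in a single $\overline{W_{i_0}}$, and therefore so is the whole chain; this yields $\dim W_{i_0}=\dim\overline{W_{i_0}}\ge d$, hence $\max_i\dim W_i\ge d$.

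For item \textup{(ii)}, the backward direction uses that $\mathbb C^n$ is irreducible together with the explicit chain of coordinate subspaces $\{0\}\subsetneq\{z_2=\dots=z_n=0\}\subsetneq\dots\subsetneq\mathbb C^n$, which has length $n$ and lies in $\overline W=\mathbb C^n$, forcing $\dim W=n$. Conversely, given a chain of length $n$ inside $\overline W$, its top term $X_n$ is irreducible and closed and cannot be a proper subset of $\mathbb C^n$, for otherwise $X_0\subsetneq\dots\subsetneq X_n\subsetneq\mathbb C^n$ would be a chain of length $n+1$, contradicting $\dim\mathbb C^n=n$; hence $X_n=\mathbb C^n$ and $\overline W=\mathbb C^n$.

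Finally, for item \textup{(iii)} the lower bound follows because each projection $\pi_i$ is a polynomial map and dimension does not increase under such maps, so $\dim\pi_i(W)\le\dim W$; taking the maximum gives the left inequality. For the upper bound I would observe that $W\subset\pi_1(W)\times\pi_2(W)$, apply \textup{(i)}, and then use $\dim(\pi_1(W)\times\pi_2(W))=\dim\pi_1(W)+\dim\pi_2(W)$, which reduces through closures to the product formula $\dim(\overline{\pi_1(W)}\times\overline{\pi_2(W)})=\dim\overline{\pi_1(W)}+\dim\overline{\pi_2(W)}$. The genuinely non-elementary content, and hence the main obstacle if one insists on self-contained arguments, is concentrated in \textup{(ii)} and \textup{(iii)}: the equality $\dim\mathbb C^n=n$, the non-increase of dimension under morphisms, and the product formula. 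These are standard but nontrivial results of dimension theory, so I would cite them rather than reprove them, after which \textup{(i)} and \textup{(iv)} are formal consequences of the chain definition and the irreducibility decomposition property.
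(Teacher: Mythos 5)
Your proposal is essentially correct, but note that the paper itself offers no proof of this lemma at all: it simply states that these properties ``are well-known in algebraic geometry'' and moves on, which is a legitimate choice since items (i)--(iv) are standard facts of Zariski dimension theory. What your derivation adds is a clean accounting of where the genuine content sits. Items (i) and (iv) are, as you show, purely formal consequences of the chain definition together with two elementary topological facts (monotonicity of closure commuting with finite unions, and the observation that an irreducible closed set inside a finite union of closed sets lies in one of them); your arguments for both are complete and correct, including the key step in (iv) that the top element $X_d$ of a maximal chain drags the whole chain into a single $\overline{W_{i_0}}$. Items (ii) and (iii) genuinely rest on the nontrivial imported theorems --- $\dim\mathbb C^n=n$ (equivalently, no chain of irreducible closed sets of length $n+1$, which is the Krull dimension of the polynomial ring), non-increase of dimension under polynomial maps, and the product formula --- and citing these rather than reproving them is the right call, exactly as the paper implicitly does for the whole lemma. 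One point you should make explicit in (iii): the reduction ``through closures'' uses the identity $\overline{A\times B}=\overline{A}\times\overline{B}$ for the Zariski topology, which is not automatic from general topology because the Zariski topology on a product is strictly finer than the product topology; it does hold, by the standard argument that for fixed $a\in A$ the slice $\{y:(a,y)\in\overline{A\times B}\}$ is closed and contains $B$, and then for fixed $b\in\overline{B}$ the slice $\{x:(x,b)\in\overline{A\times B}\}$ is closed and contains $A$. With that sentence added, your proof is complete modulo the cited standard results.
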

It can be shown that if $W$ is a linear subspace, then $\dim W$ coincides with the well-known definition of  dimension in linear algebra, that is, $\dim W$ is equal to the number of vectors in a basis of $W$. In particular, in the case where $W_1$ and $W_2$ are linear subspaces,
 statements \textup{(i)}--\textup{(iii)} in Lemma \ref{lemma:propalggeom} are well
known in linear algebra.

In the remaining part of this subsection we 
explain a method to obtain a bound on the dimension of a set $W\subset\mathbb C^l$. 
The method is summarized in Procedure \ref{proc}, which, 
together with Lemma \ref{lemma1}, will serve as the main tool for proving generic properties  in this paper.

First, in Subsections \ref{subsubsection2.2.1}--\ref{subsubsection2.2.2}, we address the following auxiliary problem. Given a set $X=\pi(Z)\subset\mathbb C^l$, where the set $Z\subset\mathbb C^n$ is of the special form \eqref{eq:setZ} and $\pi$ is the projection $\pi:\mathbb C^n\rightarrow\mathbb C^l$, we want to obtain a bound on $\dim X$. 

\subsubsection{Construction of  set $Z$ and determination of $\dim Z$}\label{subsubsection2.2.1}
 Let $p_1,\dots,$ $p_{n-m}$, $q_1,\dots,q_{n-m}$ be polynomials in the variables $z_1,\dots,z_m$. We define the open subset
$$
Y=\{(z_1,\dots,z_m):\ q_1(z_1,\dots,z_m)\ne 0,\dots,  q_{n-m}(z_1,\dots,z_m)\ne 0\}\subset \mathbb C^m.
$$
Then, by Lemma \ref{Lemma2.1} \textup{(ii)}, $\overline{Y}=\mathbb C^m$. Hence, by Lemma \ref{lemma:propalggeom} \textup{(ii)}, $\dim Y=m$. By definition, the set $Z\subset\mathbb C^n$ is the image of $Y$ under the mapping
\begin{equation*} 
\phi:\ (z_1,\dots,z_m)\in Y\mapsto\left(z_1,\dots,z_m, \frac{p_1(z_1,\dots,z_m)}{q_1(z_1,\dots,z_m)},\dots,\frac{p_{n-m}(z_1,\dots,z_m)}{q_{n-m}(z_1,\dots,z_m)}\right)\in Z,
\end{equation*}
that is, $Z=\phi(Y)$. It is clear that the projection of $Z$  onto the first $m$ coordinates of $\mathbb C^n$  coincides with $Y$.
Hence, by Lemma \ref{lemma:propalggeom} \textup{(iii)}, $\dim Z\geq m$. The following lemma is well known, it states
that $\dim Z= m$. In other words, the dimension of the image $\phi(Y)$  cannot exceed the dimension of $Y$. Note that  Lemma
\ref{lemma:2.8} requires  $p_1,\dots,p_{n-m}$, $q_1,\dots,q_{n-m}$  to be polynomials in the variables $z_1,\dots,z_m$.
\begin{lemma}\label{lemma:2.8}
Let $p_1,\dots,p_{n-m},q_1,\dots,q_{n-m}$ be polynomials in the variables\\ $z_1,\dots,z_m$ and
\begin{equation}\label{eq:setZ}
\begin{split}
Z=\{&(z_1,\dots,z_m,z_{m+1},\dots,z_n):\\
&z_{m+1}:=\frac{p_1(z_1,\dots,z_m)}{q_1(z_1,\dots,z_m)},\dots, z_n:=\frac{p_{n-m}(z_1,\dots,z_m)}{q_{n-m}(z_1,\dots,z_m)},\\
&q_1(z_1,\dots,z_m)\ne 0,\dots,  q_{n-m}(z_1,\dots,z_m)\ne 0,\ z_1,\dots,z_m\in\mathbb C\}\subset \mathbb C^n.
\end{split}
\end{equation}
Then $\overline Z$ is irreducible and $\dim \overline{Z}=m$.
\end{lemma}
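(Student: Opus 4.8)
The plan is to realize $Z$ as the image of the open set $Y$ under the graph morphism $\phi$ from the text, and then to upgrade $\phi$ to a Zariski homeomorphism by writing down an explicit regular inverse. Both conclusions — irreducibility of $\overline Z$ and the equality $\dim\overline Z=m$ — are invariants that transfer across a homeomorphism, so they will follow immediately from the facts, already established in the text, that $Y$ is irreducible with $\dim Y=m$. The virtue of this route is that it converts the assertion ``the image cannot have larger dimension than the source'' into a statement about an isomorphism, where equality of dimensions is automatic.

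Concretely, I would proceed in three steps. First, I would record that $Y=\{q_1\ne 0,\dots,q_{n-m}\ne 0\}$ is a nonempty open subset of $\mathbb C^n$'s ambient irreducible space $\mathbb C^m$ (nonempty because each $q_j$ is a nonzero polynomial); since every nonempty open subset of an irreducible space is irreducible and dense, $Y$ is irreducible with $\overline Y=\mathbb C^m$, whence $\dim Y=m$ by Lemma \ref{lemma:propalggeom} \textup{(ii)}. Second, I would check that $\phi$ is a Zariski homeomorphism onto $Z$: each of its components is either a coordinate $z_i$ or a quotient $p_j/q_j$ whose denominator is nowhere zero on $Y$, so every component is regular on $Y$ and $\phi$ is a morphism, in particular continuous; conversely the restriction to $Z$ of the linear projection $\pi$ onto the first $m$ coordinates is continuous, and reading off the defining equations \eqref{eq:setZ} gives $\pi\circ\phi=\mathrm{id}_Y$ and $\phi\circ\pi=\mathrm{id}_Z$. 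Thus $\phi$ and $\pi$ are mutually inverse continuous maps. Third, since a homeomorphism sends irreducible closed subsets to irreducible closed subsets and preserves strict inclusion chains, it preserves both irreducibility and the topological dimension; hence $Z$ is irreducible and $\dim Z=\dim Y=m$. Passing to the closure, $\overline Z$ is irreducible (the closure of an irreducible set is irreducible: if $\overline Z=C_1\cup C_2$ with the $C_i$ closed, then $Z$ lies entirely in one of them, forcing that $C_i$ to equal $\overline Z$), and $\dim\overline Z=\dim Z=m$ by the remark in the text that $\dim W=\dim\overline W$.

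I expect the only genuinely delicate point to be the upper bound $\dim Z\le m$. One could instead appeal to the general theorem that the image of a variety under a morphism has dimension at most that of the source — which is exactly the informal claim preceding the lemma — but that theorem rests on the fiber-dimension and transcendence-degree machinery and is heavier than needed here. The approach above deliberately sidesteps it by exhibiting the regular inverse $\pi$, so that $\phi$ is an isomorphism onto $Z$ and equality of dimensions is forced. Consequently the real care lies only in justifying two easy facts: that $\phi$ is a bona fide morphism (which hinges precisely on the $q_j$ being units on $Y$, i.e. on the hypothesis that the $q_j$ are nonvanishing there) and that $\pi$ genuinely inverts $\phi$ on $Z$.
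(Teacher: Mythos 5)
Your construction of the pair of mutually inverse continuous maps $\phi$ and $\pi|_Z$ is correct, and it does deliver the irreducibility claim: $Z$ is homeomorphic to the irreducible space $Y$, and the closure of an irreducible set is irreducible. (The paper itself offers no proof of this lemma --- it is cited as well known --- so the only question is whether your argument is complete.) The gap is in the dimension claim, and it sits exactly at the point you declare to have been sidestepped. A homeomorphism $Y\to Z$ preserves the \emph{intrinsic} Krull dimension of $Z$, i.e.\ the supremum of lengths of chains of irreducible closed subsets of $Z$ \emph{in its subspace topology}. But the conclusion $\dim\overline Z=m$, with the paper's definition of dimension, concerns chains of irreducible closed subsets of $\overline Z$; such chains may involve sets lying entirely in the boundary $\overline Z\setminus Z$, and intersecting them with $Z$ can produce empty sets or collapse strict inclusions. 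So from ``intrinsic $\dim Z=m$'' you still need ``no chain in $\overline Z$ has length exceeding $m$,'' and the remark in the text that $\dim W=\dim\overline W$ cannot supply this: in the paper that identity holds \emph{by definition} (the paper's $\dim W$ is already defined via chains in $\overline W$), whereas what you need is the nontrivial statement that the subspace Krull dimension of $Z$ equals the Krull dimension of its closure.

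That statement is not a consequence of general topology, so no argument using only the homeomorphism and density can establish it. Concretely, in the two-point Sierpinski space $X=\{a,b\}$ with open sets $\emptyset,\{a\},X$, the subset $\{a\}$ is open and dense and is homeomorphic to a one-point space (Krull dimension $0$), yet $\overline{\{a\}}=X$ has Krull dimension $1$ because of the chain $\{b\}\subsetneq X$; this also shows that the closure-based dimension is not a homeomorphism invariant of the subset. Ruling out this phenomenon for the locally closed set $Z$ is precisely the well-known algebro-geometric fact that a quasi-affine variety has the same dimension as its closure (Hartshorne, Ch.~I, Prop.~1.10), whose standard proof runs through the transcendence-degree/height-of-primes machinery --- exactly the machinery you set out to avoid. (Indeed, the shortest correct route here is algebraic: the relations $q_j(z_1,\dots,z_m)z_{m+j}=p_j(z_1,\dots,z_m)$ hold on $Z$, hence on $\overline Z$, so the function field of the irreducible variety $\overline Z$ is generated by $z_1,\dots,z_m$ and has transcendence degree at most $m$.) As written, your argument proves irreducibility and, with the paper's projection lemma, the lower bound $\dim\overline Z\ge m$, but the upper bound $\dim\overline Z\le m$ --- the ``only genuinely delicate point'' --- remains unproven.
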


In this paper we call the variables $z_1,\dots,z_m$ and $z_{m+1},\dots,z_n$ in \eqref{eq:setZ}  {\em ``independent'' parameters} and {\em ``dependent'' parameters}, respectively. Thus, Lemma \ref{lemma:2.8} formalizes the fact that $\dim Z$ coincides with the number of its ``independent parameters''.


\subsubsection{Construction of  projection $\pi$ and bound on $\dim \pi(Z)$}\label{subsubsection2.2.2}
Let $\pi$ be the projection 
\begin{equation}\label{eq:projection}
\pi:\ \mathbb C^n\rightarrow \mathbb C^l,\quad
\pi(z_1,\dots,z_n)=(z_{k+1},\dots,z_m,\dots,z_{k+l}),
\end{equation}
for certain $k$, $l$ such that  $k+1\leq m\leq k+l\leq n$. We  assume additionally that
$1\leq k$ and $l\leq m$. We consider the set $\pi(Z)$. Thus,
 $\pi$ drops at least one of the ``independent'' parameters  $z_1,\dots,z_m$ but not all of them; $\pi$ may also drop ``dependent'' parameters
 $z_{m+1},\dots,z_n$, even all of them. 
By Lemma \ref{lemma:propalggeom} \textup{(i)}, $\dim \pi(Z)\leq \dim\mathbb C^l=l$. Now  we explain that this trivial bound
may be further improved so that we obtain $\dim \pi(Z)< l$.

Let $f$ denote  the restriction of $\pi$ to $Z$,
\begin{equation}\label{eq:mapf}
f:Z\rightarrow \mathbb C^l,\quad f(z_1,\dots,z_n)=(z_{k+1},\dots,z_{k+l})\ \text{for all }\ (z_1,\dots,z_n)\in Z,
\end{equation}
yielding that $\pi(Z)=f(Z)$.
Denote by $f^{-1}(s_{k+1},\dots,s_{k+l})\subset Z$  the preimage of the point $(s_{k+1},\dots,s_{k+l})\in f(Z)$:
\begin{equation*} 
\begin{split}
f^{-1}(s_{k+1},\dots,s_{k+l})= &\ \{(z_1,\dots,z_n)\in Z: f(z_1,\dots,z_n)=(s_{k+1},\dots,s_{k+l})\}\\
=&\ \{(z_1,\dots,z_n)\in Z:\ z_{k+1}=s_{k+1},\dots,z_{k+l}=s_{k+l}\}.
\end{split}
\end{equation*}
The following lemma easily follows from the ``Fiber dimension Theorem'' \cite[Theorem 3.7, p. 78]{Perrin2008}, it relates the dimension of the preimage with the dimension of the projection.

\begin{lemma}\label{Theorem:FDTh} Let $Z$ and $f$ be defined by \eqref{eq:setZ} and \eqref{eq:mapf}, respectively.
Suppose that 
\begin{equation}\label{eq:fpreim}
\dim f^{-1}(s_{k+1},\dots,s_{k+l})\geq d\ \text{ for all } (s_{k+1},\dots,s_{k+l})\in f(Z).
\end{equation}
Then 
 $ (\dim \overline{\pi(Z)}=)\dim \overline{f(Z)}\leq m-d$.
\end{lemma}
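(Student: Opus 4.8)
The plan is to deduce this from the Fiber Dimension Theorem applied to an extension of $f$ to the irreducible closed set $\overline Z$. First I would observe that the projection $\pi$ in \eqref{eq:projection} is a polynomial map defined on all of $\mathbb C^n$, so its restriction $\bar f$ to $\overline Z$ is a morphism $\bar f:\overline Z\to\mathbb C^l$ extending $f$. Since $\bar f$ is continuous, $\bar f(\overline Z)\subseteq\overline{f(Z)}$, while trivially $f(Z)\subseteq\bar f(\overline Z)$; taking closures gives $\overline{\bar f(\overline Z)}=\overline{f(Z)}$. Set $Y':=\overline{f(Z)}$ and $e:=\dim Y'$. By Lemma \ref{lemma:2.8}, $\overline Z$ is irreducible with $\dim\overline Z=m$, and the closure of the image of an irreducible set under a morphism is irreducible, so $Y'$ is irreducible; by construction $\bar f:\overline Z\to Y'$ is dominant.

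Next I would invoke the Fiber Dimension Theorem \cite[Theorem 3.7, p. 78]{Perrin2008} for the dominant morphism $\bar f$ of irreducible varieties: the generic fiber has dimension $\dim\overline Z-\dim Y'=m-e$, i.e. there is a nonempty open subset $U\subseteq Y'$ such that $\dim\bar f^{-1}(s)=m-e$ for every $s\in U$. The key compatibility between the two kinds of fibers is that, since $Z\subseteq\overline Z$ and $\bar f$ extends $f$, we have $f^{-1}(s)=\bar f^{-1}(s)\cap Z\subseteq\bar f^{-1}(s)$; hence by Lemma \ref{lemma:propalggeom} \textup{(i)} and hypothesis \eqref{eq:fpreim}, $\dim\bar f^{-1}(s)\geq\dim f^{-1}(s)\geq d$ for every $s\in f(Z)$.

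Finally I would select a single good point. Because $Y'$ is irreducible, the nonempty open set $U$ is dense, and $f(Z)$ is dense in $Y'=\overline{f(Z)}$; therefore $f(Z)\cap U\neq\empty$. Choosing $s_0\in f(Z)\cap U$ and combining the two estimates yields $m-e=\dim\bar f^{-1}(s_0)\geq\dim f^{-1}(s_0)\geq d$, whence $e=\dim\overline{f(Z)}\leq m-d$, which is the claim (recall that $\pi(Z)=f(Z)$, so $\overline{\pi(Z)}=\overline{f(Z)}$).

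The main obstacle I anticipate is bookkeeping the distinction between the fibers of $f$ over $Z$ and the fibers of $\bar f$ over $\overline Z$: the generic-fiber formula coming from the Fiber Dimension Theorem controls $\bar f^{-1}(s)$ only over a dense open subset of $Y'$, whereas the hypothesis \eqref{eq:fpreim} provides a lower bound on $f^{-1}(s)$ only for $s$ in the (possibly non-closed, merely constructible) image $f(Z)$. The crux is the inclusion $f^{-1}(s)\subseteq\bar f^{-1}(s)$ together with the density of $f(Z)$, which guarantees that the open locus where the generic fiber dimension is attained actually meets $f(Z)$, so that the hypothesis can legitimately be transferred to a generic fiber.
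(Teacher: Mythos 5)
Your proof is correct and takes essentially the same approach as the paper: the paper gives no detailed argument for this lemma, simply asserting that it ``easily follows'' from the Fiber Dimension Theorem of Perrin, and your write-up is precisely the careful elaboration of that citation (extending $f$ to the irreducible closure $\overline Z$, checking dominance onto $\overline{f(Z)}$, and using density of $f(Z)$ in $\overline{f(Z)}$ to place a point of $f(Z)$ inside the open set $U$ where the generic fiber dimension $m-\dim\overline{f(Z)}$ is attained). The density step you flag as the crux is indeed the only nontrivial bookkeeping needed, and you handle it correctly.
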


Thus, to obtain the  bound
$\dim \pi(Z)<l$, it suffices  to show that $d>m-l$.

The results of Subsections \ref{subsubsection2.2.1}--\ref{subsubsection2.2.2} are summarized in the following procedure.
\begin{procedure}\label{procaux} Input: a subset $X=\pi(Z)\subset\mathbb C^l$, where 
$Z\subset\mathbb C^n$ ($n\geq l$) is of the form \eqref{eq:setZ} and  $\pi$ is of the form \eqref{eq:projection}.\\
Output: bound on $\dim X$.
\begin{itemize}
\item[(i)]
 Set $m:=\dim Z$ (by Lemma \ref{lemma:2.8}).
\item[(ii)]
 Find $d$ such that  \eqref{eq:fpreim} holds.
\item[(iii)]
$\dim X\leq m-d$ (by Lemma \ref{Theorem:FDTh}).
\end{itemize}
\end{procedure}
\subsubsection{A method to obtain a bound on $\dim W\subset\mathbb C^l$}\label{subsubsection2.2.3}
In this subsection we consider the
following problem: given a set of points $W\subset\mathbb C^l$ that satisfy a certain property, we want to 
show that $\dim W<l$.

First we ``parameterize'' the problem: we find a larger subset $\widehat{Z}\subset\mathbb C^n$ and a projection $\pi:\mathbb C^n\rightarrow\mathbb C^l$ such that $W=\pi(\widehat{Z})$. Our parameterizations are such that the set $\widehat{Z}$ is included into  a finite union of subsets $Z_u\subset\mathbb C^n$, $\widehat{Z}\subset\bigcup Z_u$, and that all $Z_u$ are of the form \eqref{eq:setZ}. (For example,  if $W$ is the set of $2\times 2$ matrices with zero eigenvalue, then $W=\pi(\widehat{Z})$, where $\widehat{Z}=
\{(\mathbf A,\mathbf f): \mathbf A\mathbf f=\mathbf 0,\ \mathbf f\ne \mathbf 0\}\subset \mathbb C^{2\times 2}\times \mathbb C^2
$
and $\pi:\mathbb C^{2\times 2}\times \mathbb C^2\rightarrow\mathbb C^{2\times 2}$. Obviously,
$\widehat{Z}:=Z_1\cup Z_2$, where
$
Z_i=\{(\mathbf A,\mathbf f): \mathbf A\mathbf f=\mathbf 0,\ f_i\ne 0\}$, and it can be verified that, indeed, $Z_1$ and $Z_2$ are of the form
\eqref{eq:setZ}.)
Since $W=\pi(\widehat{Z})$ and $\widehat{Z}\subset\bigcup Z_u$, from
Lemma \ref{lemma:propalggeom} \textup{(i)}, \textup{(iv)} it follows that
\begin{equation*}
\dim W=\dim\pi(\widehat Z)\leq \dim\pi (\bigcup Z_u)= \dim\bigcup \pi(Z_u)=
\max\limits_u\dim \pi(Z_u)\ 
\end{equation*}
or $\dim W\leq\max\limits_u\dim X_u$, where $X_u=\pi(Z_u)$.
Thus, to obtain a bound on $\dim W$ one should obtain bounds on $\dim X_u$ for all $u$. This can be done
by following the steps in Procedure \ref{procaux} for $X=X_u$. 

 The results of Subsection \ref{subsubsecdimset} are summarized in the following procedure. 
  \begin{procedure}\label{proc} Input: a set of points $W\subset\mathbb C^l$ that satisfy a certain property.\\
  Output: bound on $\dim W$.\\  
{\bf Phase I: ``Parameterization''.}
 \begin{itemize}
 \item[(1)] Express $W$ as $\pi(\widehat{Z})$: $W=\pi(\widehat{Z})$,  where  $\widehat{Z}\subset \mathbb C^n$ and
  $\pi$ is of the form \eqref{eq:projection}.
  \item[(2)] Express $\widehat{Z}$ as part of a finite union $\widehat{Z}\subset\bigcup Z_u$, where all $Z_u$ are of the form \eqref{eq:setZ}.
 \end{itemize}
 {\bf Phase II: Obtaining a bound on $\dim W$.}
 \begin{itemize}
 \item[(3)] For all values of $u$: apply Procedure \ref{procaux} for $X=X_u=\pi(Z_u)$, obtain a bound on $\dim X_u$,
 $\dim X_u\leq l_u$.
 \item[(4)]   $\dim W\leq\max\limits_u l_u$.
 \end{itemize}
 \end{procedure}
In none of the cases in this paper,  the values $l_u$ in step (3) of Procedure \ref{proc} will depend on $u$.
 Thus, we will apply Procedure \ref{procaux} only once, e.g. for $u=1$.

\subsection{Zariski closed proper subsets have measure zero}\label{subsection:zeromeausure}
The following \\ lemma is well known. 
We include a proof since we do not know an explicit reference where such a proof can be found.
\begin{lemma}\label{lemma1}
Let $W\subset \mathbb C^l$, $\dim W<l$ and  $W_{\mathbb R} := W \cap \mathbb R^l$.
Then $\mu_{\mathbb C}\{W\}=0$ and $\mu_{\mathbb R}\{W_{\mathbb R}\}=0$, where
$\mu_{\mathbb C}$ and $\mu_{\mathbb R}$ denote the Lebesgue measures on $\mathbb C^l$ and $\mathbb R^l$, respectively.
\end{lemma}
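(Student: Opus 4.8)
The plan is to reduce the entire statement to a single elementary fact: the zero set of a nonzero polynomial is Lebesgue-null. First I would pass from $W$ to its Zariski closure $\overline W$. Since $\dim\overline W=\dim W<l$, Lemma \ref{lemma:propalggeom}\textup{(ii)} gives $\overline W\subsetneq\mathbb C^l$, so $\overline W$ is a \emph{proper} closed subset. Because $W\subset\overline W$ and $W_{\mathbb R}\subset\overline W\cap\mathbb R^l$, and because every subset of a Lebesgue-null set is measurable with measure zero, it suffices to prove $\mu_{\mathbb C}(\overline W)=0$ and $\mu_{\mathbb R}(\overline W\cap\mathbb R^l)=0$. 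Writing $\overline W=\{z:\ p_1(z)=\dots=p_k(z)=0\}$ as in the definition of a closed set, the properness of $\overline W$ forces at least one $p_i$ to be not identically zero; I fix such a nonzero polynomial $p$, so that $\overline W\subset\{z\in\mathbb C^l:\ p(z)=0\}$.

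The key lemma I would isolate is: a nonzero real polynomial $q$ in $n$ real variables satisfies $\mu_{\mathbb R}\{x\in\mathbb R^n:\ q(x)=0\}=0$. I would prove this by induction on $n$. For $n=1$ the zero set is finite. For the inductive step, write $q(x_1,\dots,x_n)=\sum_{j=0}^{d}c_j(x_1,\dots,x_{n-1})\,x_n^{\,j}$ with $c_d\not\equiv 0$; by the induction hypothesis the locus $N=\{c_d=0\}$ is null in $\mathbb R^{n-1}$, and for every $(x_1,\dots,x_{n-1})\notin N$ the univariate polynomial $q(x_1,\dots,x_{n-1},\cdot)$ is nonzero and hence has finitely many roots. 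The zero set of $q$ is closed, hence measurable, and Fubini's theorem yields measure zero because almost every vertical slice is finite.

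Finally I would apply this lemma to dispose of both measures at once. Splitting $p=\mathrm{Re}\,p+i\,\mathrm{Im}\,p$, the assumption $p\not\equiv 0$ forces at least one of $\mathrm{Re}\,p$, $\mathrm{Im}\,p$ to be a nonzero real polynomial, since the monomials are linearly independent as functions. For $\mu_{\mathbb C}$, identify $\mathbb C^l$ with $\mathbb R^{2l}$ via $z_j=x_j+iy_j$; then $\mathrm{Re}\,p$ and $\mathrm{Im}\,p$ are real polynomials in the $2l$ variables $x_1,y_1,\dots,x_l,y_l$, and $\overline W\subset\{p=0\}$ lies inside the zero set of whichever of them is nonzero, which is null in $\mathbb R^{2l}$ by the key lemma. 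For $\mu_{\mathbb R}$, restrict to $\mathbb R^l$: the restrictions of $\mathrm{Re}\,p$ and $\mathrm{Im}\,p$ are real polynomials in $l$ variables, again not both identically zero, and $\overline W\cap\mathbb R^l$ lies in the zero set of the nonzero one, null in $\mathbb R^l$.

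The main obstacle is the key lemma itself, and more precisely the Fubini step: one must isolate the nonvanishing locus of the top coefficient $c_d$, verify measurability of the zero set, and confirm that the (possibly full) slices sitting over the null set $N$ contribute nothing to the integral. Everything else is bookkeeping built on the reduction to a single nonzero polynomial and the dimension criterion of Lemma \ref{lemma:propalggeom}\textup{(ii)}.
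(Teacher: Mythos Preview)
Your proposal is correct and follows the same approach as the paper: reduce to $\overline W$, pick a nonzero defining polynomial, and invoke the fact that the zero set of a nonzero polynomial has Lebesgue measure zero over both $\mathbb C^l$ and $\mathbb R^l$. The paper simply cites this last fact as well known, whereas you supply the induction-plus-Fubini proof and the real/imaginary-part bookkeeping; there is no substantive difference in strategy.
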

\begin{proof}
We may assume that $\overline{W}$ is defined by \eqref{eq:setX}. Then 
 $\overline{W}$ is the zero set of the polynomials $p_1,\dots, p_k$. The results follow from the well-known fact
that the zero set of a nonzero polynomial  has measure zero both on $\mathbb C^l$ and $\mathbb R^l$.
\end{proof}

As our overall strategy for showing that
a subset $W\subset \mathbb  F^l$ has measure zero, we will use Procedure \ref{proc} and Lemma \ref{lemma1} as follows. If $\mathbb F=\mathbb C$, then  we follow the steps in Procedure \ref{proc} to show that $\dim W< l$, and conclude, by Lemma \ref{lemma1}, that $\mu_{\mathbb C}(W)=0$. If
$\mathbb F=\mathbb R$, then first, we  extend  $W\subset\mathbb R^l$ to a subset $W_{\mathbb C}\subset\mathbb C^l$ by letting all parameters in $W$
take values in $\mathbb C$; second, we follow the steps in Procedure \ref{proc} to show that $\dim W_{\mathbb C}< l$, and conclude, by Lemma \ref{lemma1}, that   $\mu_\mathbb R(W)= \mu_\mathbb R(W_{\mathbb C} \cap \mathbb R^l)=0$.
\subsection{Example}\label{subsection:example}
 To illustrate our approach we prove the well-known fact that 
two  generic square matrices of the same size do not share eigenvalues.
\begin{example} 
Let $W=\{(\mathbf A,\mathbf B):\ \mathbf A \text{ and } \mathbf B\ \text{have a common eigenvalue}\}$
be a subset of $\mathbb C^{n\times n}\times\mathbb C^{n\times n}$. We claim that $\mu_{\mathbb C}(W)=0$, where $\mu_{\mathbb C}$  is the Lebesgue measure on $\mathbb C^{n\times n}\times \mathbb C^{n\times n}$. By Lemma \ref{lemma1} it is sufficient to prove that $\dim W\leq 2n^2-1$.
To obtain a bound on $\dim W$ we follow the steps in Procedure \ref{proc}.

{\bf Phase I: ``Parameterization''.}

\textup{(1)} It is clear that
 $(\mathbf A,\mathbf B)\in W$ if and only if
there exist $\lambda\in\mathbb C$  and nonzero vectors $\mathbf f$ and $\mathbf g$ such that $\mathbf A\mathbf f=\lambda\mathbf f$ and $\mathbf B\mathbf g=\lambda\mathbf g$. Hence, $W=\pi(\widehat Z)$, where
$$
\widehat Z=\{(\mathbf A,\mathbf B,\lambda,\mathbf f,\mathbf g):\ \mathbf A\mathbf f=\lambda\mathbf f,\ \mathbf B\mathbf g=\lambda\mathbf g,\ \mathbf f\ne\mathbf 0,\ \mathbf g\ne\mathbf 0\}
$$
is a subset of $\mathbb C^{n\times n}\times \mathbb C^{n\times n}\times \mathbb C\times\mathbb C^n\times\mathbb C^n$ 
and $\pi$ is the projection onto the first two factors
$$
\pi:\ \mathbb C^{n\times n}\times \mathbb C^{n\times n}\times \mathbb C\times\mathbb C^n\times\mathbb C^n\rightarrow 
\mathbb C^{n\times n}\times \mathbb C^{n\times n}.
$$
\qquad\textup{(2)} We 
represent $\widehat Z$ as a finite union of  sets of the form \eqref{eq:setZ}: 
$$
\widehat Z=\bigcup\limits_{1\leq u,v\leq n} Z_{u,v},\quad Z_{u,v}:=\{(\mathbf A,\mathbf B,\lambda,\mathbf f,\mathbf g)
:\ \mathbf A\mathbf f=\lambda\mathbf f,\ \mathbf B\mathbf g=\lambda\mathbf g,\ \ f_u\ne0,\ g_v\ne 0\}.
$$ 
We show that all $Z_{u,v}$ are of the form \eqref{eq:setZ}.
 To simplify the presentation we restrict ourselves to the case $u=1$ and $v=1$. The general case can be proved in the same way. Since, by assumption, $f_1\ne 0$ and $g_1\ne 0$, we can express $\mathbf a_1$ and $\mathbf b_1$ via
 $\lambda$, $\mathbf a_2,\dots,\mathbf a_n$, $\mathbf b_2,\dots,\mathbf b_n$, $\mathbf f$, and $\mathbf g$. Hence,
\begin{equation*}
\begin{split}
Z_{1,1}:=&\ \{(\mathbf A,\mathbf B,\lambda,\mathbf f,\mathbf g)\in Z,\ f_1\ne0,\ g_1\ne 0\}\\
=&\ \{(\mathbf a_1 = (\lambda\mathbf f-\mathbf a_2f_2-\dots-\mathbf a_nf_n)/f_1,\mathbf a_2,\dots,\mathbf a_n,\\
           &\ \ \  \mathbf b_1 = (\lambda\mathbf f-\mathbf b_2g_2-\dots-\mathbf b_ng_n)/g_1,\mathbf b_2,\dots,\mathbf b_n,\lambda,\mathbf f,\mathbf g):\ f_1\ne0,\ g_1\ne 0\}
           \end{split}
\end{equation*}
is indeed of the form \eqref{eq:setZ}, where $z_1,\dots,z_m$ correspond to the value  $\lambda$ and the entries of $\mathbf a_2,\dots,\mathbf a_n$, $\mathbf b_2,\dots,\mathbf b_n$, $\mathbf f$, $\mathbf g$ and where $z_{m+1},\dots,z_n$ correspond to the
 entries of $\mathbf a_1$ and  $\mathbf b_1$.

{\bf Phase II: Obtaining a bound on $\dim W$.}

\begin{itemize}
\item[\textup{(3)}] To obtain bounds on $\dim \pi(Z_{u,v})$ we follow the steps in Procedure \ref{procaux}
 for $X=\pi(Z_{u,v})$. W.l.o.g. we again restrict ourselves to the case 
$u=1$ and $v=1$.
\item[\textup{(3i)}] By Lemma \ref{lemma:2.8}, $\dim Z_{1,1}=1+n(n-1)+n(n-1)+n+n=2n^2+1$. 
\item[\textup{(3ii)}] Let $f:Z_{1,1}\rightarrow \mathbb C^{n\times n}\times  \mathbb C^{n\times n}$ denote  the restriction of $\pi$ to $Z_{1,1}$: 
  $$
  f(\mathbf A,\mathbf B,\lambda,\mathbf f,\mathbf g)=(\mathbf A, \mathbf B),\quad  
  (\mathbf A,\mathbf B,\lambda,\mathbf f,\mathbf g)\in Z_{1,1}.
  $$
 From the definition of $Z_{1,1}$ it follows that if 
$(\mathbf A,\mathbf B,\lambda,\mathbf f,\mathbf g)\in Z_{1,1}$,
   then \\ $(\mathbf A,\mathbf B,\lambda,\alpha\mathbf f,\beta\mathbf g)\in Z_{1,1}$,
    where $\alpha$ and $\beta$ are arbitrary nonzero values. (Indeed, if $\mathbf A\mathbf f=\lambda\mathbf f$ and $\mathbf B\mathbf g=\lambda\mathbf g$, then $\mathbf A(\alpha \mathbf f)=\lambda(\alpha \mathbf f)$ and $\mathbf B(\beta \mathbf g)=\lambda(\beta \mathbf g)$.)
 Hence,
 $$
 f^{-1}(\mathbf A, \mathbf B)\supset\{(\mathbf A,\mathbf B,\lambda,\alpha\mathbf f,\beta\mathbf g):\ \alpha\ne 0,\ \beta\ne 0\}.
  $$
  By Lemma \ref{lemma:propalggeom} \textup{(iii)},
  $
    \dim f^{-1}(\mathbf A, \mathbf B)\geq \dim \{(\alpha \mathbf f, \beta \mathbf g):\ \alpha\ne 0,\ \beta\ne 0\} \geq\\ \dim \{(\alpha f_1, \beta g_1):\ \alpha\ne 0,\ \beta\ne 0\}
    $. Since $\dim \{(\alpha f_1, \beta g_1):\ \alpha\ne 0,\ \beta\ne 0\}=\dim\mathbb C^2=2$, it follows that $\dim f^{-1}(\mathbf A, \mathbf B)\geq 2=:d$.
\item[\textup{(3iii)}] By Lemma \ref{Theorem:FDTh}, $\dim \overline{\pi(Z_{1,1})}\leq\dim Z_{1,1}-d\leq 2n^2+1-2=2n^2-1$. Note that precisely the property that $\mathbf A$ and $\mathbf B$ share an eigenvalue has allowed us to find a projection that reduces the dimension. What remains is a little effort to show that $d=2$ implies that having eigenvalues in common is a subgeneric property.
\end{itemize}
\qquad\textup{(4)} Hence, $\dim W\leq\max\limits_{u,v}\dim \pi(Z_{u,v}) =\dim \pi(Z_{1,1})=2n^2-1$.

Since $W\subset \mathbb C^{n\times n}\times \mathbb C^{n\times n}$ and $\dim W\leq 2n^2-1$, 
	it follows from  Lemma \ref{lemma1} that $\mu_{\mathbb C}(W)=0$.
\end{example}

\section{Uniqueness of the CPD and proof of Proposition \ref{Propositionunstructured}}\label{Section3}

In the sequel, $\omega(\lambda_1, \dots,\lambda_R)$ denotes the number of nonzero entries of $[\lambda_1\ \dots\ \lambda_R]^T$.
The following  condition \condWm\  was introduced in \cite{PartI, PartII} in terms of $m$-th compound matrices.
In this paper we will use the following (equivalent)  definition.
\begin{definition}\label{Def:Wm}
We say that  condition \condWm\  holds for the triplet of matrices $(\mathbf A,\mathbf B,\mathbf C)\in \mathbb F^{I\times R}\times \mathbb F^{J\times R}
\times \mathbb F^{K\times R}$ if $\omega(\lambda_1, \dots,\lambda_R)\leq m-1$ whenever
 \begin{gather*}
r_{\mathbf A\Diag{\lambda_1,\dots,\lambda_R}\mathbf B^T}\leq m-1\quad  \text{ for }\quad [\lambda_1\ \dots\ \lambda_R]^T\in\textup{range}(\mathbf C^T). 
\end{gather*}
\end{definition}
Since the rank of the product $\mathbf A\Diag{\lambda_1,\dots,\lambda_R}\mathbf B^T$ does not exceed the rank of any of the factors and since
$r_{\Diag{\lambda_1,\dots,\lambda_R}}=\omega(\lambda_1,\dots,\lambda_R)$, we have the implication
\begin{equation}\label{eqdirectimplication}
\omega(\lambda_1,\dots,\lambda_R)\leq m-1\quad\Rightarrow\quad r_{\mathbf A\Diag{\lambda_1,\dots,\lambda_R}\mathbf B^T}\leq m-1.
\end{equation}
Condition  \condWm\  in Definition \ref{Def:Wm} means that the opposite of the implication in \eqref{eqdirectimplication} holds
for all $[\lambda_1\ \dots\ \lambda_R]\in\textup{range}(\mathbf C^T)\subset\mathbb C^R$. 
We now give a set of deterministic conditions, among which a \condWm-type condition, that guarantee CPD uniqueness. These conditions will be checked for a generic tensor in the proof of Proposition \ref{Propositionunstructured}.
\begin{proposition}(see \cite[Proposition 1.22]{PartII})\label{ProFullUniq1onematrixWm}
Let $\mathcal T=[\mathbf A, \mathbf B, \mathbf C]_R$ and
$m_{\mathbf C}:=R-r_{\mathbf C}+2$.
Assume that
\begin{itemize}
\item[\textup{(i)}]
$\max(\min( k_{\mathbf A}, k_{\mathbf B}-1),\ \min( k_{\mathbf A}-1, k_{\mathbf B}))+k_{\mathbf C}\geq R+1$;
\item[\textup{(ii)}]
condition \condWmC\ holds for the triplet $(\mathbf A,\mathbf B,\mathbf C)$;
\item[\textup{(iii)}]
$\mathbf A\odot\mathbf B$ has full column rank.
\end{itemize}
Then $r_{\mathcal T}=R$ and the CPD of tensor $\mathcal T$  is unique.
\end{proposition}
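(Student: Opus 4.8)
The plan is to run the classical two-stage CPD uniqueness argument in the third mode: first show that the factor matrix $\mathbf C$ is essentially determined, and then reconstruct $\mathbf A$ and $\mathbf B$ from it. I would start from an arbitrary competing polyadic decomposition $\mathcal T=[\bar{\mathbf A},\bar{\mathbf B},\bar{\mathbf C}]_{\bar R}$ with $\bar R\le R$ and try to prove that its rank-$1$ terms form the same unordered set as those of $[\mathbf A,\mathbf B,\mathbf C]_R$. Such a matching forces a bijection between the two lists of terms, hence $\bar R=R$; combined with the obvious bound $r_{\mathcal T}\le R$ this yields minimality $r_{\mathcal T}=R$ and uniqueness at once.

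The engine of the first stage is the pencil of slice combinations. For every $\mathbf x\in\mathbb F^K$ the two decompositions give
\begin{equation*}
M(\mathbf x):=\mathbf A\,\Diag{\mathbf C^T\mathbf x}\,\mathbf B^T=\bar{\mathbf A}\,\Diag{\bar{\mathbf C}^T\mathbf x}\,\bar{\mathbf B}^T,
\end{equation*}
and since $r_{\mathbf A\Diag{\boldsymbol\lambda}\mathbf B^T}\le\omega(\boldsymbol\lambda)$ (the inequality underlying \eqref{eqdirectimplication}) we get $r_{M(\mathbf x)}\le\min\bigl(\omega(\mathbf C^T\mathbf x),\,\omega(\bar{\mathbf C}^T\mathbf x)\bigr)$. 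The goal is to compare the supports of $\mathbf C^T\mathbf x$ and $\bar{\mathbf C}^T\mathbf x$ uniformly in $\mathbf x$ and feed the comparison into Kruskal's permutation lemma, whose conclusion is exactly $\bar{\mathbf C}=\mathbf C\boldsymbol\Pi\boldsymbol\Lambda$ with $\boldsymbol\Pi$ a permutation matrix and $\boldsymbol\Lambda$ a nonsingular diagonal matrix (in particular $\bar R=R$). Condition \textup{(ii)} is the decisive input here: when $\bar{\mathbf C}^T\mathbf x$ is so sparse that $r_{M(\mathbf x)}\le m_{\mathbf C}-1=R-r_{\mathbf C}+1$, condition \condWmC\ supplies the converse of \eqref{eqdirectimplication} and returns the support bound $\omega(\mathbf C^T\mathbf x)\le R-r_{\mathbf C}+1$; this converse is precisely what is not automatic and what makes \condWm\ the crux. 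Condition \textup{(i)}, the asymmetric Kruskal-type inequality on $k_{\mathbf A},k_{\mathbf B},k_{\mathbf C}$, provides the remaining k-rank budget required by the permutation lemma and in particular forces $k_{\mathbf C}\ge1$, ruling out zero columns of $\mathbf C$.

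For the second stage I would absorb $\boldsymbol\Pi$ and $\boldsymbol\Lambda$ and assume $\bar{\mathbf C}=\mathbf C$, so that $(\mathbf A\odot\mathbf B-\bar{\mathbf A}\odot\bar{\mathbf B})\mathbf C^T=\mathbf O$. Condition \textup{(iii)} is what closes the argument: the full column rank of $\mathbf A\odot\mathbf B$, together with the matched third-factor directions (and, when $\mathbf C$ has proportional columns because $r_{\mathbf C}<R$, the extra separation provided by \textup{(i)}), forces $\bar{\mathbf A}\odot\bar{\mathbf B}=\mathbf A\odot\mathbf B$ column by column; each identity $\bar{\mathbf a}_j\bar{\mathbf b}_j^T=\mathbf a_r\mathbf b_r^T$ then fixes $(\bar{\mathbf a}_j,\bar{\mathbf b}_j)$ up to the reciprocal scaling already recorded in $\boldsymbol\Lambda$. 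This completes the term-by-term matching.

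The main obstacle is the middle step. The rank bound $r_{M(\mathbf x)}\le\omega(\bar{\mathbf C}^T\mathbf x)$ holds for free, but the permutation lemma needs control in the opposite direction, i.e. a bound on $\omega(\mathbf C^T\mathbf x)$, and it is only the combination of the rank-to-support converse furnished by \condWmC\ with the counting inequality \textup{(i)} that delivers this for all the relevant $\mathbf x$. Making the support thresholds line up with $m_{\mathbf C}-1=R-r_{\mathbf C}+1$ and handling the degenerate case of proportional (nonzero) columns of $\mathbf C$ when $r_{\mathbf C}<R$ is where the real effort lies; the reduction in the first paragraph and the Khatri--Rao recovery in the third are comparatively routine.
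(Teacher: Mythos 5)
Note first that the paper does not prove this proposition at all: it is imported verbatim from \cite[Proposition 1.22]{PartII} and used as a black box whose hypotheses are then verified for generic factors. So your proposal can only be measured against the proof in the cited source \cite{PartI,PartII}, and at the level of architecture it does mirror that proof: Kruskal's permutation lemma fed by \condWmC\ pins down $\mathbf C$, after which $\mathbf A$ and $\mathbf B$ are recovered. The problem is that both load-bearing steps are left unfilled. In stage 1 you correctly observe that \condWmC\ only returns $\omega(\mathbf C^T\mathbf x)\leq m_{\mathbf C}-1$, whereas the permutation lemma needs the pointwise bound $\omega(\mathbf C^T\mathbf x)\leq\omega(\bar{\mathbf C}^T\mathbf x)$, but you stop there. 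The missing bridge is concrete: condition \textup{(i)} implies $\min(k_{\mathbf A},k_{\mathbf B})\geq R+1-k_{\mathbf C}\geq R+1-r_{\mathbf C}=m_{\mathbf C}-1$, because $\max(\min(k_{\mathbf A},k_{\mathbf B}-1),\min(k_{\mathbf A}-1,k_{\mathbf B}))\leq\min(k_{\mathbf A},k_{\mathbf B})$ and $k_{\mathbf C}\leq r_{\mathbf C}$; moreover, whenever $\omega(\lambda_1,\dots,\lambda_R)\leq\min(k_{\mathbf A},k_{\mathbf B})$ the inequality behind \eqref{eqdirectimplication} becomes an equality, $r_{\mathbf A\Diag{\lambda_1,\dots,\lambda_R}\mathbf B^T}=\omega(\lambda_1,\dots,\lambda_R)$, since the column submatrices of $\mathbf A$ and $\mathbf B$ on the support have full column rank. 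Hence, for every $\mathbf x$ with $\omega(\bar{\mathbf C}^T\mathbf x)\leq m_{\mathbf C}-1$ one gets $r_{M(\mathbf x)}\leq m_{\mathbf C}-1$, then $\omega(\mathbf C^T\mathbf x)\leq m_{\mathbf C}-1$ by \condWmC, and finally $\omega(\mathbf C^T\mathbf x)=r_{M(\mathbf x)}\leq\omega(\bar{\mathbf C}^T\mathbf x)$ by the rank equality; only now can the permutation lemma be invoked. This chain is the actual content of the one-factor-uniqueness step, and it is absent from your sketch.

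The more serious gap is stage 2, which you declare ``comparatively routine''. As stated it is false, precisely in the regime this paper needs ($K\leq R$, hence generically $r_{\mathbf C}=K<R$). When $r_{\mathbf C}<R$, the identity $(\mathbf A\odot\mathbf B-\bar{\mathbf A}\odot\bar{\mathbf B})\mathbf C^T=\mathbf O$ only says that $\mathbf A\odot\mathbf B-\bar{\mathbf A}\odot\bar{\mathbf B}$ vanishes on $\textup{range}(\mathbf C^T)$, an $r_{\mathbf C}$-dimensional subspace of $\mathbb F^R$, and full column rank of $\mathbf A\odot\mathbf B$ does not force this difference to vanish. Matched third factors, condition \textup{(iii)}, and even $r_{\mathcal T}=R$ together do not imply matching terms: with $\mathbf e_1,\mathbf e_2$ the standard basis of $\mathbb F^2$,
\begin{align*}
\mathcal T&=\mathbf e_1\circ\mathbf e_1\circ\mathbf e_1+(1,2)^T\circ(1,-2)^T\circ\mathbf e_2+\mathbf e_2\circ\mathbf e_2\circ(1,1)^T\\
&=\mathbf e_2\circ(-2,1)^T\circ\mathbf e_1+(2,3)^T\circ(0,-1)^T\circ\mathbf e_2+(1,2)^T\circ\mathbf e_1\circ(1,1)^T
\end{align*}
is a $2\times2\times2$ tensor of rank $3$ with two different $3$-term decompositions sharing the same $\mathbf C=[\mathbf e_1\ \mathbf e_2\ (1,1)^T]$ (here $r_{\mathbf C}=2<3=R$) and with $\mathbf A\odot\mathbf B$ of full column rank. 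This does not contradict the proposition, since condition \textup{(i)} fails ($k_{\mathbf A}=k_{\mathbf B}=k_{\mathbf C}=2$ while $R+1=4$); but it shows that the recovery of $\mathbf A$ and $\mathbf B$ must use condition \textup{(i)} in an essential, non-obvious way --- it is the heart of the passage from one-factor uniqueness to overall uniqueness in \cite{PartII}, not a routine Khatri--Rao argument. A related slip: $r_{\mathbf C}<R$ does not mean $\mathbf C$ has proportional columns; under \textup{(i)} one has $k_{\mathbf C}\geq2$, so no two columns of $\mathbf C$ are proportional, and the obstruction is rank deficiency of $\mathbf C$, not proportionality.
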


{\em Proof of Proposition \ref{Propositionunstructured}.}
We show that
\begin{equation}\label{eq:fullkrank1}
\mu_{\mathbb F}\{(\mathbf A,\mathbf B,\mathbf C):\ \text{\textup{(i)} or \textup{(ii)} or \textup{(iii)} of Proposition \ref{ProFullUniq1onematrixWm} does not hold}\}=0.
\end{equation}
Since, under condition \eqref{eqmainboundWm0},
\begin{equation*} 
\mu_{\mathbb F}\{(\mathbf A,\mathbf B,\mathbf C):\ k_{\mathbf A}<I \text{ or } k_{\mathbf B}<J \text{ or }  k_{\mathbf C}<K\}=0,
\end{equation*}
it follows that \eqref{eq:fullkrank1} holds if and only if
\begin{equation*} 
\begin{split}
\mu_{\mathbb F}\{(\mathbf A,\mathbf B,\mathbf C):\ &\text{\textup{(i)} or \textup{(ii)} or \textup{(iii)} of Proposition \ref{ProFullUniq1onematrixWm} does not hold and}\\
&k_{\mathbf A}=I,\ k_{\mathbf B}=J,\ k_{\mathbf C}=K \}=0.
\end{split}
\end{equation*}

{\bf Condition \textup{(i):}}
By \eqref{eqmainboundWm0}--\eqref{eqmainboundWm},
$$
R+1\leq I+K +\frac{J-I-\sqrt{(I-J)^2+4K}}{2}<I+K\leq J+K,
$$
which easily implies that condition \textup{(i)} of Proposition \ref{ProFullUniq1onematrixWm}
holds for all matrices $\mathbf A$, $\mathbf B$, and $\mathbf C$ such that 
$k_{\mathbf A}=I$,\ $k_{\mathbf B}=J$,\ $k_{\mathbf C}=K$.

{\bf Condition \textup{(iii)}:}
 By \eqref{eq3newintro}, \eqref{eq1newintro},
$$
R\leq IJ-1-(m-1)(I+J-m)<IJ-1.
$$
It is well-known \cite[Theorem 3]{AlmostSure2001} that
$$
\mu_1\{(\mathbf A,\mathbf B):\ \text{\textup{(iii)} of Proposition \ref{ProFullUniq1onematrixWm} does not hold}\}=0
$$
if and only if $R\leq IJ$, where $\mu_1$ denotes the Lebesgue measure on $\mathbb F^{I\times R}\times \mathbb F^{J\times R}$.
Fubini's theorem \cite[Theorem C, p. 148]{halmos1974measure} allows us to extend this to a statement for $(\mathbf A,\mathbf B,\mathbf C)$:
$$
\mu_{\mathbb F}\{(\mathbf A,\mathbf B,\mathbf C):\ \text{\textup{(iii)} of Proposition \ref{ProFullUniq1onematrixWm} does not hold} \}=0.
$$

{\bf Condition \textup{(ii)}:}
Let
\begin{equation} \label{eq:fullkrank27878}
\begin{split}
W=\{(\mathbf A,\mathbf B,\mathbf C):\ &\text{\textup{(ii)} of Proposition \ref{ProFullUniq1onematrixWm} does not hold and}\\
&k_{\mathbf A}=I,\ k_{\mathbf B}=J,\ k_{\mathbf C}=K \}\subset \mathbb F^{I\times R}\times \mathbb F^{J\times R}\times \mathbb F^{K\times R}.
\end{split}
\end{equation}
To complete the proof of  Proposition \ref{Propositionunstructured} we need to show that 
$\mu_{\mathbb F}\{W\}=0$.  By Lemma \ref{lemma1}, it is sufficient to prove that, for $\mathbb F=\mathbb C$, the closure of $W$
is not the entire space $\mathbb C^{I\times R}\times \mathbb C^{J\times R}\times \mathbb C^{K\times R}$, which is equivalent
(see discussion in Subsection \ref{subsection:zeromeausure}) to
\begin{equation} \label{eq: dimofX}
\dim \overline{W}\leq IR+JR+KR-1.
\end{equation}
To prove bound \eqref{eq: dimofX} we follow the steps in Procedure \ref{proc}.

{\bf Phase I: ``Parameterization''.}

\textup{(1)} We associate $W$ with a certain $\pi(\widehat Z)$.
By Definition \ref{Def:Wm},  condition \condWm\  does not hold for the triplet $(\mathbf A,\mathbf B,\mathbf C)$ if and only if
there exist values  $\lambda_1,\dots,\lambda_R$, and  matrices $\widetilde{\mathbf A}\in \mathbb C^{I\times (m-1)}$, $\widetilde{\mathbf B}\in \mathbb C^{J\times (m-1)}$ such that
$$
 \mathbf A\Diag{\lambda_1,\dots,\lambda_R}\mathbf B^T=\widetilde{\mathbf A}\widetilde{\mathbf B}^T, \ \
 [\lambda_1\ \dots\ \lambda_R]^T\in\textup{Range}(\mathbf C^T),\ \
 $$
but $\omega(\lambda_1,\dots,\lambda_R)\geq m$.
We claim that if additionally
$k_{\mathbf A}=I$ and $k_{\mathbf B}=J$, then
$\omega(\lambda_1,\dots,\lambda_R)\geq I$. Indeed, if
$m\leq \omega(\lambda_1,\dots,\lambda_R) <I\leq J$, then  by the Frobenius inequality,
\begin{equation*}
\begin{split}
m-1\geq&\ r_{\widetilde{\mathbf A}\widetilde{\mathbf B}^T}=
r_{\mathbf A\Diag{\lambda_1,\dots,\lambda_R}\mathbf B^T}\\
\geq&\ r_{\mathbf A\Diag{\lambda_1,\dots,\lambda_R}}
+r_{\Diag{\lambda_1,\dots,\lambda_R}\mathbf B^T}-
r_{\Diag{\lambda_1,\dots,\lambda_R}}\\
=&\ \omega(\lambda_1,\dots,\lambda_R)+\omega(\lambda_1,\dots,\lambda_R)-\omega(\lambda_1,\dots,\lambda_R)=
\omega(\lambda_1,\dots,\lambda_R)\geq m,
\end{split}
\end{equation*}
which is a contradiction.
Hence, $W$ in \eqref{eq:fullkrank27878} can be expressed as
\begin{align}
W=\{(\mathbf A,\mathbf B,\mathbf C):\ \text{\condWm\  does }& \text{not hold for the triplet } (\mathbf A,\mathbf B,\mathbf C), \nonumber
\\
&\qquad\qquad\qquad\qquad\qquad\qquad k_{\mathbf A}=I,\ k_{\mathbf B}=J,\ k_{\mathbf C}=K \}\nonumber\\
=\{(\mathbf A,\mathbf B,\mathbf C):\ \text{there exist }& \lambda_1,\dots,\lambda_R\in\mathbb C,\ \widetilde{\mathbf A}\in \mathbb C^{I\times (m-1)},\ \text{ and }\ \widetilde{\mathbf B}\in \mathbb C^{J\times (m-1)}\ \nonumber \\
\text{ such that }\ & \mathbf A\Diag{\lambda_1,\dots,\lambda_R}\mathbf B^T=\widetilde{\mathbf A}\widetilde{\mathbf B}^T,\label{eq:ABC1}\\
&[\lambda_1\ \dots\ \lambda_R]^T\in\textup{Range}(\mathbf C^T), \label{eq:ABC2}\\
& k_{\mathbf A}=I,\ k_{\mathbf B}=J,\ k_{\mathbf C}=K, \label{eq:ABC3}\\
&\omega(\lambda_1,\dots,\lambda_R)\geq I\}.\nonumber
\end{align}
It is now clear that $W=\pi(\widehat Z)$, where 
$$
\widehat Z=\{(\mathbf A, \mathbf B, \mathbf C,
 \lambda_1,\dots,\lambda_R,
 \widetilde{\mathbf A}, \widetilde{\mathbf B}):\ \text{ \eqref{eq:ABC1}--\eqref{eq:ABC3} hold}\}
$$
is a subset of
$ 
\mathbb C^{I\times R}\times \mathbb C^{J\times R}\times \mathbb C^{K\times R}\times\mathbb C^R\times \mathbb C^{I\times (m-1)}\times \mathbb C^{J\times (m-1)}
$ 
and $\pi$ is the projection onto the first three factors 
\begin{equation*} 
\begin{split}
\pi:\ \mathbb C^{I\times R}\times \mathbb C^{J\times R}\times \mathbb C^{K\times R}\times\mathbb C^R\times \mathbb C^{I\times (m-1)}\times &\mathbb C^{J\times (m-1)}\rightarrow\\
 &\mathbb C^{I\times R}\times  \mathbb C^{J\times R}\times  \mathbb C^{K\times R}.
 \end{split}
\end{equation*}
\textup{(2)} Since
\begin{equation}\label{eq:newlambdaeq}
\omega(\lambda_1,\dots,\lambda_R)\geq I\ \ \Leftrightarrow\ \ 
\lambda_{u_1}\cdots\lambda_{u_I}\ne 0\ \text{ for some }\ 1\leq u_1<\dots<u_I\leq R 
\end{equation}
we obtain
\begin{equation*}
\widehat Z= \bigcup\limits_{1\leq u_1<\dots<u_I\leq R}
\{
(\mathbf A, \mathbf B, \mathbf C,
 \lambda_1,\dots,\lambda_R,
 \widetilde{\mathbf A}, \widetilde{\mathbf B}):\ \text{\eqref{eq:ABC1}--\eqref{eq:ABC3} hold, } \lambda_{u_1}\cdots\lambda_{u_I}\ne 0\}.
\end{equation*}
Let $\mathbf A_{u_1,\dots,u_I}$ denote the submatrix of $\mathbf A$ formed by columns $u_1,\dots,u_I$.
	Since \eqref{eq:ABC3} is  more restrictive  than the conditions $\det\mathbf A_{u_1,\dots,u_I}\ne 0$ and $k_{\mathbf C}=K$, it follows that
$\widehat Z\subset\bigcup\limits_{1\leq u_1<\dots<u_I\leq R} Z_{u_1,\dots,u_I}$, where
\begin{equation*}
\begin{split}
Z_{u_1,\dots,u_I}:=\{
(\mathbf A, \mathbf B, \mathbf C,
 \lambda_1,\dots,\lambda_R,
 \widetilde{\mathbf A}, \widetilde{\mathbf B}):\ &\text{ \eqref{eq:ABC1}--\eqref{eq:ABC2} hold, } \lambda_{u_1}\cdots\lambda_{u_I}\ne 0,\\
 \det\mathbf A_{u_1,\dots,u_I}\ne 0,\ k_{\mathbf C}=K\}.
 \end{split}
\end{equation*}
We show that all $Z_{u_1,\dots,u_I}$ are of the form \eqref{eq:setZ}.
 To simplify the presentation we restrict ourselves to the case $(u_1,\dots, u_I)=(1,\dots,I)$. The general case can be proved in the same way.
 Let the matrices $\mathbf A$, $\mathbf B$ and $\mathbf C$ be partitioned as
 $$
 \mathbf A=[\underbracket{\ \bar{\mathbf A}\ }_{I}\ \underbracket{\ \bar{\bar{\mathbf A}}\ }_{R-I}],\qquad
 \mathbf B=[\underbracket{\ \bar{\mathbf B}\ }_{I}\ \underbracket{\ \bar{\bar{\mathbf B}}\ }_{R-I}],\qquad
 \mathbf C=[\underbracket{\ \bar{\mathbf C}\ }_{K}\ \underbracket{\ \bar{\bar{\mathbf C}}\ }_{R-K}],
 $$
 so that $\bar{\mathbf A}=\mathbf A_{1,\dots,I}$.
    By \eqref{eq:ABC1},
  \begin{equation*} 
   \bar{\mathbf B}=\left[\left(\bar{\mathbf A}\Diag{\lambda_{1},\dots,\lambda_I}\right)^{-1}\left( \widetilde{\mathbf A}\widetilde{\mathbf B}^T - 
  \bar{\bar{\mathbf A}}\Diag{\lambda_{I+1},\dots,\lambda_R}\bar{\bar{\mathbf B}}^T\right)\right]^T.
    \end{equation*}
    By \eqref{eq:ABC2},  there exists $\mathbf x\in\mathbb C^K$ such that 
   $\matr{\lambda_1}{\lambda_R}^T=\mathbf C^T\mathbf x$ or, equivalently,  $\matr{\lambda_1}{\lambda_K}^T= \bar{\mathbf C}^T\mathbf x$ and $\matr{\lambda_{K+1}}{\lambda_R}^T= \bar{\bar{\mathbf C}}^T\mathbf x$.  Hence,
   \begin{equation*} 
    \matr{\lambda_{K+1}}{\lambda_R}^T=\bar{\bar{\mathbf C}}^T\bar{\mathbf C}^{-T}\matr{\lambda_1}{\lambda_K}^T.
       \end{equation*}
      In other words, by Cramer's rule, each entry of $\bar{\mathbf B}$  and each of the values $\lambda_{K+1},\dots,\lambda_R$  can be written as
     a ratio of two polynomials in  entries of $\mathbf A$, $\bar{\bar{\mathbf B}}$, $\mathbf C$,  $\widetilde{\mathbf A}$, $\widetilde{\mathbf B}$ and the values  $\lambda_1,\dots,\lambda_K$. By the assumptions
      $\lambda_1\cdots\lambda_I\ne 0$, $\det\bar{\mathbf A}\ne 0$, and $k_{\mathbf C}=K$ (yielding that $\det\bar{\mathbf C}\ne 0$ ), the denominator polynomial is nonzero.
   
    Hence, $Z_{1,\dots,I}$
 is indeed of the form \eqref{eq:setZ}, where  $z_1,\dots,z_m$ correspond to the entries of
 $\mathbf A$, $\bar{\bar{\mathbf B}}$, $\mathbf C$,  $\widetilde{\mathbf A}$, $\widetilde{\mathbf B}$ and the values  $\lambda_1,\dots,\lambda_K$, and where $z_{m+1},\dots,z_n$  correspond to the entries of $\bar{\mathbf B}$ and the values $\lambda_{K+1},\dots,\lambda_R$. \\
 {\bf Phase II: Obtaining a bound on $\dim W$.}
 
 \textup{(3)}  To obtain bounds on $\dim \pi(Z_{u_1,\dots,u_I})$ we follow the steps in Procedure \ref{procaux} for $X=\pi(Z_{u_1,\dots,u_I})$ W.l.o.g.  we again restrict ourselves to the case $(u_1,\dots,u_I)=(1.\dots,I)$.
 
 \textup{(3i)} By Lemma \ref{lemma:2.8},
$ \dim Z_{1,\dots,I}=IR+J(R-I)+KR+K+I(m-1)+J(m-1)$.

\textup{(3ii)} Let $f:Z_{1,\dots,I}\rightarrow \mathbb C^{I\times R}\times  \mathbb C^{J\times R}\times  \mathbb C^{K\times R}$ denote  the restriction of $\pi$ to $Z_{1,\dots,I}$: 
   $$
   f(\mathbf A, \mathbf B, \mathbf C,
   \lambda_1,\dots,\lambda_R, \widetilde{\mathbf A}, \widetilde{\mathbf B})=(\mathbf A, \mathbf B, \mathbf C),\quad  
   (\mathbf A, \mathbf B, \mathbf C,
     \lambda_1,\dots,\lambda_R, \widetilde{\mathbf A}, \widetilde{\mathbf B})\in Z_{1,\dots,I}.
   $$
  From the definition of $Z_{1,\dots,I}$ it follows that if 
 $(\mathbf A, \mathbf B, \mathbf C,
  \lambda_1,\dots,\lambda_R, \widetilde{\mathbf A}, \widetilde{\mathbf B})\in Z_{1,\dots,I}$,
    then
    $(\mathbf A, \mathbf B, \mathbf C,
       \alpha\lambda_1,\dots,\alpha\lambda_R,
       \widehat{\mathbf A}\mathbf T, \alpha\widehat{\mathbf B}\mathbf T^{-T})\in Z_{1,\dots,I}$,
     where $\alpha$ is an arbitrary nonzero value,  $\widehat{\mathbf A}$ is an arbitrary full column rank matrix  such that $\textup{range}(\widehat{\mathbf A})\supset \textup{range}(\widetilde{\mathbf A})$,
     $\mathbf T$ is an arbitrary nonsingular   $(m-1)\times (m-1)$ matrix, and $\widehat{\mathbf B}$ satisfies  $\widehat{\mathbf A}\widehat{\mathbf B}^T=\widetilde{\mathbf A}\widetilde{\mathbf B}^T$.
  Hence,
  $$
  f^{-1}(\mathbf A, \mathbf B, \mathbf C)\supset\{\mathbf A, \mathbf B, \mathbf C,
   \alpha\lambda_1,\dots,\alpha\lambda_R,
   \widehat{\mathbf A}\mathbf T, \alpha\widehat{\mathbf B}\mathbf T^{-T}:\ \alpha\ne 0,\ \det\mathbf T\ne 0\}.
   $$
   By Lemma \ref{lemma:propalggeom} \textup{(iii)},
   $
     \dim f^{-1}(\mathbf A, \mathbf B, \mathbf C)\geq \dim \{\alpha\lambda_1, \widehat{\mathbf A}\mathbf T:\ \alpha\ne 0,\ \det\mathbf T\ne 0\}
     $. Since, by construction, the matrix $\widehat{\mathbf A}$ has full column rank and, by assumption, $\lambda_1\ne 0$ and $m-1\leq I$, it follows that
   $
   \dim \{\alpha\lambda_1, \widehat{\mathbf A}\mathbf T:\ \alpha\ne 0,\ \det\mathbf T\ne 0\}=1+(m-1)^2=:d.
   $
   Thus, $\dim f^{-1}(\mathbf A, \mathbf B, \mathbf C)\geq d$.
   
   \textup{(3iii)} By Lemma \ref{Theorem:FDTh} and 
   \eqref{eq3newintro}, 
   \begin{equation*}
   \begin{split}
   \dim \overline{\pi(Z_{1,\dots,I})}\leq&\ \dim Z_{1,\dots,I}-d\\
   =&\ IR+J(R-I)+KR+K+I(m-1)+J(m-1)-1-(m-1)^2\\
   \leq&\ IR+JR+KR-1.
   \end{split}
   \end{equation*}
\qquad \textup{(4)} Hence, $\dim W\leq\max\limits_{1\leq u_1<\dots<u_I\leq R}\dim \pi(Z_{u_1,\dots,u_I})\leq IR+JR+KR-1$. 

Since $W\subset \mathbb C^{I\times R}\times \mathbb C^{J\times R}\times \mathbb C^{K\times R}$ and 
$\dim W\leq IR+JR+KR-1$, it follows from  Lemma \ref{lemma1} that $\mu_{\mathbb C}(W)=0$.
\qquad\endproof
\section{Uniqueness of the SFS-CPD and proof of Proposition \ref{PropositionINDSCAL}}\label{Section4}
The following  condition \condUm\  was introduced in \cite{PartI, PartII} in terms of $m$-th compound matrices.
In this paper we will use the following (equivalent)  definition.
\begin{definition}\label{Def:Um}
We say that  condition \condUm\  holds for the pair $(\mathbf A,\mathbf B)\in\mathbb F^{I\times R}\times \mathbb F^{J\times R}$ if
$\omega(\lambda_1,\dots,\lambda_R)\leq m-1$ whenever $r_{\mathbf A\Diag{\lambda_1,\dots,\lambda_R}\mathbf B^T}\leq m-1$.
\end{definition}

Condition  \condUm\  in Definition \ref{Def:Um} means that the opposite of the implication in \eqref{eqdirectimplication} holds
for all $[\lambda_1\ \dots\ \lambda_R]\in\mathbb C^R$. 
Thus, if the matrix $\mathbf C$ has full column rank (implying $\textup{range}(\mathbf C^T)=\mathbb C^R$), then \condWm\ (see Definition \ref{Def:Wm}) is equivalent to \condUm. Otherwise,  \condWm\ is more relaxed than \condUm:
 \begin{flalign}
  &\ \text{\condUm\ holds for the pair }(\mathbf A,\mathbf B)\quad\Rightarrow\quad \text{\condWm\ holds for the triplet } (\mathbf A,\mathbf B,\mathbf C).&\label{eq:Umimplies1}
  \end{flalign}
  The following implication was  proved in  \cite[Lemmas 3.2, 3.7]{PartI}:
  \begin{flalign}
   &\ \text{\condUm\ holds for the pair }(\mathbf A,\mathbf B)\quad\Rightarrow\quad \mathbf A\odot\mathbf B \text{ has full column rank.}&
  \label{eq:Umimplies2}
  \end{flalign}
The following deterministic  condition for  uniqueness of the SFS-CPD will be checked for a generic SFS-tensor in the
proof of Proposition \ref{PropositionINDSCAL}.
\begin{proposition}\label{Prop:Indscal1}
Let $\mathcal T=[\mathbf A, \mathbf A, \mathbf C]_R$ and
$m_{\mathbf C}:=R-r_{\mathbf C}+2$. Assume that
\begin{itemize}
\item[\textup{(i)}] $k_{\mathbf A}+k_{\mathbf C}\geq R+2$;
\item[\textup{(ii)}] condition \condUmC\ holds for the pair $(\mathbf A,\mathbf A)$.
\end{itemize}
Then $r_{\mathcal T}=R$ and the SFS-CPD of tensor $\mathcal T$  is unique.
\end{proposition}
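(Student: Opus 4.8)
The plan is to derive the present proposition from the unstructured uniqueness result Proposition \ref{ProFullUniq1onematrixWm} by specializing the latter to the symmetric case $\mathbf B=\mathbf A$, and then to transfer the resulting uniqueness of the (unstructured) CPD to uniqueness of the SFS-CPD. First I would observe that since $\mathcal T=[\mathbf A,\mathbf A,\mathbf C]_R$, setting $\mathbf B:=\mathbf A$ gives $k_{\mathbf B}=k_{\mathbf A}$, so that
\[
\max(\min(k_{\mathbf A},k_{\mathbf B}-1),\ \min(k_{\mathbf A}-1,k_{\mathbf B}))=k_{\mathbf A}-1.
\]
Hence hypothesis \textup{(i)} of the present proposition, $k_{\mathbf A}+k_{\mathbf C}\geq R+2$, is exactly condition \textup{(i)} of Proposition \ref{ProFullUniq1onematrixWm}.

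Next I would use the two implications recorded in \eqref{eq:Umimplies1} and \eqref{eq:Umimplies2}. Hypothesis \textup{(ii)}, namely that \condUmC\ holds for the pair $(\mathbf A,\mathbf A)$, implies via \eqref{eq:Umimplies1} that \condWmC\ holds for the triplet $(\mathbf A,\mathbf A,\mathbf C)$, which is condition \textup{(ii)} of Proposition \ref{ProFullUniq1onematrixWm}; and it implies via \eqref{eq:Umimplies2} that $\mathbf A\odot\mathbf A$ has full column rank, which is condition \textup{(iii)} of Proposition \ref{ProFullUniq1onematrixWm}. Note that the value $m_{\mathbf C}=R-r_{\mathbf C}+2$ is the same in both propositions. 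Applying Proposition \ref{ProFullUniq1onematrixWm} with $\mathbf B=\mathbf A$ then yields $r_{\mathcal T}=R$ and uniqueness of the (unstructured) CPD of $\mathcal T$.

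Finally I would transfer CPD uniqueness to SFS-CPD uniqueness. Since $r_{\mathcal T}=R$ and $\mathcal T$ already admits the SFS-CPD $[\mathbf A,\mathbf A,\mathbf C]_R$ of length $R$, and since the SFS-rank can never be smaller than the rank, the SFS-rank of $\mathcal T$ equals $R$. Now let $[\mathbf A',\mathbf A',\mathbf C']_R$ be any SFS-CPD of $\mathcal T$. Regarded as an unstructured PD of length $R=r_{\mathcal T}$, it is a CPD, so by the uniqueness just established its rank-$1$ terms coincide, up to a permutation $\sigma$, with those of $[\mathbf A,\mathbf A,\mathbf C]_R$; that is, $\mathbf a'_r\circ\mathbf a'_r\circ\mathbf c'_r=\mathbf a_{\sigma(r)}\circ\mathbf a_{\sigma(r)}\circ\mathbf c_{\sigma(r)}$ for every $r$. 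Comparing the two rank-$1$ tensors on each side forces $\mathbf a'_r=\alpha_r\mathbf a_{\sigma(r)}$ and $\mathbf c'_r=\alpha_r^{-2}\mathbf c_{\sigma(r)}$ for some nonzero scalar $\alpha_r$, which is precisely the trivial indeterminacy of the SFS-CPD. Hence the SFS-CPD of $\mathcal T$ is unique.

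I expect the only genuinely delicate point to be this last step: one must argue carefully that the two-sided scaling freedom of the unstructured CPD collapses to the single scalar $\alpha_r$ (with $\mathbf c'_r$ scaled by $\alpha_r^{-2}$) once the two outer factors are forced to be equal, so that uniqueness of the unstructured CPD indeed implies uniqueness in the constrained SFS sense. Everything else reduces to substituting $\mathbf B=\mathbf A$ into Proposition \ref{ProFullUniq1onematrixWm} and invoking \eqref{eq:Umimplies1}--\eqref{eq:Umimplies2}.
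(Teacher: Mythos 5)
Your proposal is correct and follows essentially the same route as the paper: specialize Proposition \ref{ProFullUniq1onematrixWm} to $\mathbf B=\mathbf A$ (so that condition \textup{(i)} there reduces to $k_{\mathbf A}+k_{\mathbf C}\geq R+2$) and obtain conditions \textup{(ii)} and \textup{(iii)} from \eqref{eq:Umimplies1} and \eqref{eq:Umimplies2}. The only difference is that you spell out the passage from uniqueness of the unstructured CPD to uniqueness of the SFS-CPD (the collapse of the scaling indeterminacy to $\mathbf a'_r=\alpha_r\mathbf a_{\sigma(r)}$, $\mathbf c'_r=\alpha_r^{-2}\mathbf c_{\sigma(r)}$), a step the paper treats as immediate; your explicit argument for it is correct.
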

\begin{proof}
We show that \textup{(i)}--\textup{(iii)} in Proposition \ref{ProFullUniq1onematrixWm} hold for
$\mathcal T=[\mathbf A, \mathbf B, \mathbf C]_R$ with $\mathbf B=\mathbf A$: \textup{(i)} is obvious from $k_{\mathbf B}=k_{\mathbf A}$, and 
\textup{(ii)} and \textup{(iii)} follow from \eqref{eq:Umimplies1} and \eqref{eq:Umimplies2}, respectively.
\end{proof}

{\em Proof of  Proposition \ref{PropositionINDSCAL}.} We show that
\begin{equation}\label{eq:fullkrank1indscal1}
\mu_{\mathbb F}\{(\mathbf A,\mathbf C):\ \text{\textup{(i)} or \textup{(ii)} of Proposition \ref{Prop:Indscal1} does not hold}\}=0.
\end{equation}
Since 
\begin{equation*} 
\mu_{\mathbb F}\{(\mathbf A,\mathbf C):\ \mathbf A \text{ has a zero minor or }  k_{\mathbf C}<K\}=0,
\end{equation*}
it follows that \eqref{eq:fullkrank1indscal1} holds if and only if
\begin{equation*} 
\begin{split}
\mu_{\mathbb F}\{(\mathbf A,\mathbf C):\ &\text{\textup{(i)} or \textup{(ii)}  of Proposition \ref{Prop:Indscal1}  does not hold,}\\
&\text{all minors of }\mathbf A\ \text{are nonzero, and}\ k_{\mathbf C}=K \}=0.
\end{split}
\end{equation*}
It is clear that if all minors of $\mathbf A$ are nonzero, then $k_{\mathbf A}=I$.

{\bf Condition \textup{(i)}:} By \eqref{eqIndscalIlessK},
$$
R+2\leq I+K+\max\left(\frac{-K+2}{2},\frac{5-\sqrt{8K+8I+1}}{2}\right)
\leq I+K,
$$
which means that condition \textup{(i)} of Proposition \ref{Prop:Indscal1}
holds for all matrices $\mathbf A$ and $\mathbf C$ such that $k_{\mathbf A}=I$ and $k_{\mathbf C}=K$.

{\bf Condition \textup{(ii)}:}
Let
\begin{equation} \label{eq:fullkrank2indscal}
\begin{split}
W=\{(\mathbf A,\mathbf C):\ &\text{\textup{(ii)} of Proposition \ref{Prop:Indscal1} does not hold,}\\
&\text{all minors of }\mathbf A\ \text{are nonzero, and}\ k_{\mathbf C}=K \}\subset \mathbb F^{I\times R}\times \mathbb F^{K\times R}.
\end{split}
\end{equation}
To complete the proof of  Proposition \ref{PropositionINDSCAL} we need to show that 
$\mu_{\mathbb F}\{W\}=0$.  By Lemma \ref{lemma1}, it is sufficient to prove that for $\mathbb F=\mathbb C$, the closure of $W$ is not the entire space $\mathbb C^{I\times R}\times\mathbb C^{K\times R}$, which is equivalent
(see discussion in Subsection \ref{subsection:zeromeausure}) to
\begin{equation} 
\dim \overline{W}\leq IR+KR-1.\label{eq:boundindscal1}
\end{equation}
To prove bound \eqref{eq:boundindscal1} we follow the steps in Procedure \ref{proc}.

{\bf Phase I: ``Parameterization''.}

\textup{(1)} We associate $W$ with a certain $\pi(\widehat Z)$.
By Definition \ref{Def:Um},  condition \condUm\  does not hold for the pair $(\mathbf A,\mathbf A)$ if and only if
there exist values  $\lambda_1,\dots,\lambda_R$, and a matrix $\widetilde{\mathbf A}\in \mathbb C^{I\times (m-1)}$ such that
$$
 \mathbf A\Diag{\lambda_1,\dots,\lambda_R}\mathbf A^T=\widetilde{\mathbf A}\widetilde{\mathbf A}^T, \ \
  $$
but $\omega(\lambda_1,\dots,\lambda_R)\geq m$.
We claim that if additionally
$k_{\mathbf A}=I$, then
$\omega(\lambda_1,\dots,\lambda_R)\geq I$.
Indeed, if
$m=m_{\mathbf C}\leq \omega(\lambda_1,\dots,\lambda_R) <I$, then  by the Frobenius inequality,
\begin{equation*}
\begin{split}
m-1\geq&\  r_{\widetilde{\mathbf A}\widetilde{\mathbf A}^T}
=r_{\mathbf A\Diag{\lambda_1,\dots,\lambda_R}\mathbf A^T}\\
\geq&\  r_{\mathbf A\Diag{\lambda_1,\dots,\lambda_R}}+
r_{\Diag{\lambda_1,\dots,\lambda_R}\mathbf A^T}-
r_{\Diag{\lambda_1,\dots,\lambda_R}}\\
=&\ \omega(\lambda_1,\dots,\lambda_R)+\omega(\lambda_1,\dots,\lambda_R)-\omega(\lambda_1,\dots,\lambda_R)=
\omega(\lambda_1,\dots,\lambda_R)\geq m,
\end{split}
\end{equation*}
which is a contradiction.
Hence, $W$ in \eqref{eq:fullkrank2indscal} can be expressed as
\begin{align}
W=\{(\mathbf A,\mathbf C):&\ \text{\condUm\  does not hold for the pair } (\mathbf A,\mathbf A), \nonumber\\
& \text{all minors of }\mathbf A\ \text{are nonzero, and}\ k_{\mathbf C}=K \} \nonumber\\
=\{(\mathbf A,\mathbf C):&\ \text{there exist } \lambda_1,\dots,\lambda_R\in\mathbb C,\ \widetilde{\mathbf A}\in \mathbb C^{I\times (m-1)} \text{ such that } \nonumber \\
\ & \mathbf A\Diag{\lambda_1,\dots,\lambda_R}\mathbf A^T=\widetilde{\mathbf A}\widetilde{\mathbf A}^T,\ \label{eq:ABC2indscal1}\\
&\text{all minors of }\mathbf A\ \text{are nonzero,}\label{eq:ABC2indscal2}\\
& k_{\mathbf C}=K,\ \text{and} \label{eq:ABC2indscal3}\\
&\omega(\lambda_1,\dots,\lambda_R)\geq I\}.\label{eq:ABC2indscal4} 
\end{align}
Obviously, $W=\pi^*(\widehat Z^*)$, where 
$
\widehat Z^*=\{(\mathbf A,\mathbf C, \lambda_1,\dots,\lambda_R,
 \widetilde{\mathbf A}):\ \text{ \eqref{eq:ABC2indscal1}--\eqref{eq:ABC2indscal4} hold}\}
 \subset
 \mathbb C^{I\times R}\times \mathbb C^{K\times R}\times\mathbb C^R\times \mathbb C^{I\times (m-1)}
$
and $\pi^*$ is the projection onto the first two factors, 
$$
\pi^*:\  \mathbb C^{I\times R}\times \mathbb C^{K\times R}\times\mathbb C^R\times \mathbb C^{I\times (m-1)}\rightarrow
  \mathbb C^{I\times R}\times \mathbb C^{K\times R}.
$$
In step (2) of  Procedure \ref{proc} we should express $\widehat Z^*$  as part of a finite union of  sets of the form \eqref{eq:setZ}. That is,
some entries of $\mathbf A$, $\mathbf C$, $\tilde{\mathbf A}$ and some of the values $\lambda_1,\dots,\lambda_R$ (corresponding to ``dependent'' parameters) must be rational functions of the remaining entries of $\mathbf A$, $\mathbf C$, $\tilde{\mathbf A}$ and of the remaining  values of $\lambda_1,\dots,\lambda_R$ (corresponding to ``independent'' parameters).
 Since \eqref{eq:ABC2indscal1} is nonlinear in the entries of $\mathbf A$ and $\tilde{\mathbf A}$, it is not easy, if possible at all, 
 to find such rational functions.  To solve this problem, we further parametrize $W$, namely, we parametrize an $I\times I$ submatrix of $\mathbf A$ by its LDU decomposition, that is, we extend $\widehat Z^*$ to a  ``larger'' $\widehat Z$ by
 adding the new variables $\mathbf L$, $\mathbf D$, and $\mathbf U$. In step (2) we will show that 
 the new, LDU-based, parametrization   will turn identity \eqref{eq:ABC2indscal1} into a UDU decomposition. The properties of the UDU decomposition will imply that the entries of $\mathbf U$ and $\mathbf D$ will be rational functions of the remaining parameters involved in \eqref{eq:ABC2indscal1}.
 
By definition, set
\begin{equation}\label{eq:indscal2hatZ}
\begin{split}
\widehat Z&:= \bigcup\limits_{1\leq u_1<\dots<u_I\leq R} 
\{
(\mathbf A, \mathbf C,\mathbf L,\mathbf D,\mathbf U,
 \lambda_1,\dots,\lambda_R,
 \widetilde{\mathbf A}):\\
  &\qquad\ \ \qquad\ \ \qquad\ \ \text{ \eqref{eq:ABC2indscal1}--\eqref{eq:ABC2indscal3} hold, } \lambda_{u_1}\cdots\lambda_{u_I}\ne 0,\\
  &\qquad\ \ \qquad\ \ \qquad\ \ \mathbf A_{u_1,\dots,u_I}=\mathbf L\mathbf D\mathbf U,\quad \mathbf D\ \text{ is nonsingular and diagonal},\\
  &\qquad\ \ \qquad\ \ \qquad\ \ \mathbf L\ \text{ is unit lower triangular,}\  \mathbf U\ \text{ is unit upper triangular}\}\\
  &\subset
  \mathbb C^{I\times R}\times\mathbb C^{K\times R}\times\mathbb C^{I\times I}\times\mathbb C^{I\times I}\times \mathbb C^{I\times I}\times\mathbb C^R\times \mathbb C^{I\times (m-1)}.
  \end{split}
  \end{equation} 
 where $\mathbf A_{u_1,\dots,u_I}$ denotes the submatrix of $\mathbf A$ formed by columns $u_1,\dots,u_I$.
Let also $\pi$ be the projection onto the first two factors
\begin{equation*} 
\pi:\  \mathbb C^{I\times R}\times\mathbb C^{K\times R}\times\mathbb C^{I\times I}\times\mathbb C^{I\times I}\times \mathbb C^{I\times I}\times\mathbb C^R\times \mathbb C^{I\times (m-1)}\rightarrow
\mathbb C^{I\times R}\times\mathbb C^{K\times R}.
\end{equation*}
We show that $W=\pi(\widehat Z)$. 
Since, by \eqref{eq:ABC2indscal2}, all principal minors in the upper left-hand corner of
 $\mathbf A_{u_1,\dots,u_I}$ are nonzero,
 it follows from  properties of the LDU decomposition  that
 there exist $I\times I$ matrices
 $\mathbf L$, $\mathbf D$, and $\mathbf U$ such that  $\mathbf L$ is unit lower triangular, $\mathbf U$ is unit upper triangular,
 $\mathbf D$ is nonsingular and  diagonal, and $\mathbf A_{u_1,\dots,u_I}=\mathbf L\mathbf D\mathbf U$. In other words,
 the set $\pi(\widehat Z)$ remains the same if the equality constraint $\mathbf A_{u_1,\dots,u_I}=\mathbf L\mathbf D\mathbf U$ 
 in all subsets in the union in \eqref{eq:indscal2hatZ} is dropped. Hence, by \eqref{eq:newlambdaeq},
 \begin{equation*}
 \begin{split}
 \pi(\widehat Z) &= \pi\big( \{
 (\mathbf A, \mathbf C,\mathbf L,\mathbf D,\mathbf U,
  \lambda_1,\dots,\lambda_R,
  \widetilde{\mathbf A}):\\
   &\qquad\ \ \qquad\ \ \text{ \eqref{eq:ABC2indscal1}--\eqref{eq:ABC2indscal4} hold, }\ \mathbf D\ \text{ is nonsingular and diagonal,}\\
   &\qquad\ \ \qquad\ \ \mathbf L\ \text{ is unit lower triangular,}\  \mathbf U\ \text{ is unit upper triangular}\}
 \big)=W.
 \end{split}
 \end{equation*}
  \textup{(2)}
Dropping condition \eqref{eq:ABC2indscal2} in all subsets in the union in \eqref{eq:indscal2hatZ}, we obtain that
   $\widehat Z\subset \bigcup\limits_{1\leq u_1<\dots<u_I\leq R} Z_{u_1,\dots,u_I}$, where
 \begin{equation*}
 \begin{split}
 Z_{u_1,\dots,u_I}:=\{
 (\mathbf A, \mathbf C,\mathbf L,\mathbf D,&\mathbf U,
  \lambda_1,\dots,\lambda_R,
  \widetilde{\mathbf A}):\
   \text{ \eqref{eq:ABC2indscal1} and \eqref{eq:ABC2indscal3} hold, } \lambda_{u_1}\cdots\lambda_{u_I}\ne 0,\\
   &\ \mathbf A_{u_1,\dots,u_I}=\mathbf L\mathbf D\mathbf U,\quad \mathbf D\ \text{ is nonsingular and diagonal},\\
   &\ \mathbf L\ \text{ is unit lower triangular,}\  \mathbf U\ \text{ is unit upper triangular}\}.
   \end{split}
 \end{equation*}
We show that all $Z_{u_1,\dots,u_I}$ are of the form \eqref{eq:setZ}.
To simplify the presentation we restrict ourselves to the case $(u_1,\dots, u_I)=(1,\dots,I)$. 
The general case can be proved in the same way.

For brevity  we set $\bar{\mathbf A}=\mathbf A_{1,\dots,I}$ and  $\bar{\bar{\mathbf A}}=\mathbf A_{I+1,\dots,R}$, so that
  $  \mathbf A=[\bar{\mathbf A}\ \bar{\bar{\mathbf A}}]$ with
$\bar{\mathbf A}=\mathbf L\mathbf D\mathbf U$. We show that $Z_{1,\dots, I}$ is of the form \eqref{eq:setZ},
where $z_1,\dots,z_m$  correspond to  the entries of  $\bar{\bar{\mathbf A}}$, $\mathbf C$, $\mathbf L$, $\mathbf D$, $\widetilde{\mathbf A}$ 
and the values $\lambda_{I+1},\dots,\lambda_R$, where $z_{m+1},\dots,z_n$ correspond to 
 the entries of $\bar{\mathbf A}$, $\mathbf U$ and  the values $\lambda_1,\dots,\lambda_I$, and where the inequality
  constraint $\lambda_1\cdots\lambda_I\ne 0\nonumber$ can be replaced by an equivalent inequality constraint which depends only on
   the entries of  $\bar{\bar{\mathbf A}}$, $\mathbf C$, $\mathbf L$, $\mathbf D$, $\widetilde{\mathbf A}$ 
  and the values $\lambda_{I+1},\dots,\lambda_R$.

   Substituting $\mathbf A=[\mathbf L\mathbf D\mathbf U\ \bar{\bar{\mathbf A}}]$ into 
 \eqref{eq:ABC2indscal1} we obtain
 $$
 \left(\mathbf L\mathbf D\mathbf U\right)
 \Diag{\lambda_1,\dots,\lambda_I}\left(\mathbf L\mathbf D\mathbf U\right)^T
 +\bar{\bar{\mathbf A}}
 \Diag{\lambda_{I+1},\dots,\lambda_R}
 \bar{\bar{\mathbf A}}^T=\widetilde{\mathbf A}\widetilde{\mathbf A}^T
 $$
 or
 \begin{align}
 \label{eq:4.4fg}
 &\mathbf U\Diag{\lambda_1,\dots,\lambda_I}\mathbf U^T=\mathbf M,\ \text{where}\\
 &\mathbf M:=(\mathbf L\mathbf D)^{-1}\left(\widetilde{\mathbf A}\widetilde{\mathbf A}^T-\bar{\bar{\mathbf A}}\Diag{\lambda_{I+1},\dots,\lambda_R}
 \bar{\bar{\mathbf A}}^T\right)(\mathbf L\mathbf D)^{-T}.\label{eq:4.4fgr}
 \end{align}
 It follows from \eqref{eq:4.4fg}, the assumption $\lambda_1\cdots\lambda_I\ne 0$, and properties of the UDU decomposition
 that each of the values $\lambda_1,\dots ,\lambda_I$ and each entry of $\mathbf U$  can be written as
 the ratio of two polynomials in  entries of $\mathbf M$, that the
  denominator polynomial can always be chosen equal to one of the principal minors in the lower right hand corner of $\mathbf M$,
  and that
  \begin{align*}
 &\lambda_1\cdots\lambda_I\ne 0\nonumber\\
  \Leftrightarrow\
 &\text{all principal minors in the lower right hand corner of } \mathbf M \text{ are nonzero}\nonumber\\ 
  \begin{split}
   \xLeftrightarrow{\eqref{eq:4.4fgr}}\ &\text{all principal minors in the lower right hand corner of the matrix }\\
    &(\mathbf L\mathbf D)^{-1}\left(\widetilde{\mathbf A}\widetilde{\mathbf A}^T-\bar{\bar{\mathbf A}}\Diag{\lambda_{I+1},\dots,\lambda_R}  \bar{\bar{\mathbf A}}^T\right)(\mathbf L\mathbf D)^{-T} \text{ are nonzero.} 
    \end{split}
 \end{align*}
 Hence, $Z_{1,\dots,I}$ is indeed of the form \eqref{eq:setZ}.   \\
 {\bf Phase II: Obtaining a bound on $\dim W$.}

\textup{(3)}  To obtain bounds on $\dim \pi(Z_{u_1,\dots,u_I})$ we follow the steps in Procedure \ref{procaux} for $X=\pi(Z_{u_1,\dots,u_I})$. W.l.o.g.  we again restrict ourselves to the case $(u_1,\dots,u_I)=(1.\dots,I)$.
 
 \textup{(3i)} By Lemma \ref{lemma:2.8}, $\dim \overline{Z}_{1,\dots,I}$ is equal to the 
 sum of  the number of entries of
  $\bar{\bar{\mathbf A}}$, $\mathbf C$, $\mathbf L$, $\mathbf D$, $\widetilde{\mathbf A}$, and $[\lambda_{I+1},\dots,\lambda_R]$,
 \begin{align*}
 \dim \overline{Z}_{1,\dots,I}=&\ 
 I(R-I)+  KR+  (I^2-I)/2+I+ (m-1)I+ (R-I)\\
 =&\ IR+KR+R-\frac{I^2-I(2m-3)}{2}.
  \end{align*}
 \qquad\textup{(3ii)} Let $f:Z_{1,\dots,I}\rightarrow \mathbb C^{I\times R}\times  \mathbb C^{K\times R}$ denote  the restriction of $\pi$ to $Z_{1,\dots,I}$: 
  $$
   f(\mathbf A, \mathbf C, \mathbf L,\mathbf D,\mathbf U, \lambda_1,\dots,\lambda_R, \widetilde{\mathbf A})=(\mathbf A,\mathbf C),\quad  
   (\mathbf A,\mathbf C, \mathbf L,\mathbf D,\mathbf U,\lambda_1,\dots,\lambda_R, \widetilde{\mathbf A})\in Z_{1,\dots,I}.
   $$
   We need to find $d$ such that $\dim f^{-1}(\mathbf A,\mathbf C)\geq d$ for all $(\mathbf A,\mathbf C)\in f(Z_{1,\dots,I})$.
    
We make the assumption:
    \begin{equation}\label{eq:assumption}
     \begin{split}
       \text{if }\ &(\mathbf A,\mathbf C,\mathbf L,\mathbf D,\mathbf U, \lambda_1,\dots,\lambda_R, \widetilde{\mathbf A})\in Z_{1,\dots,I},\\ \text{ then }\ 
         &(\mathbf A,\mathbf C,\mathbf L,\mathbf D,\mathbf U,\widehat{\lambda}_1,\dots,\widehat{\lambda}_R, \widehat{\mathbf A})\in Z_{1,\dots,I}
          \end{split}
    \end{equation}
   for some   $\widehat{\lambda}_1,\dots,\widehat{\lambda}_R$  and full column rank matrix $\widehat{\mathbf A}$. That assumption
   \eqref{eq:assumption} indeed holds will be demonstrated later. 
  Since
   $$
   \mathbf A\Diag{\alpha^2\widehat{\lambda}_1,\dots,\alpha^2\widehat{\lambda}_R}\mathbf A^T=
   \alpha^2\mathbf A\Diag{\widehat{\lambda}_1,\dots,\widehat{\lambda}_R}\mathbf A^T=
   \alpha^2\widehat{\mathbf A}\widehat{\mathbf A}^T=(\alpha \widehat{\mathbf A}\mathbf T)
   (\alpha \widehat{\mathbf A}\mathbf T)^T
   $$
   for any nonzero $\alpha$ and an $(m-1)\times (m-1)$ matrix $\mathbf T$ such that $\mathbf T\mathbf T^T=\mathbf I$,
   it follows from the definition $Z_{1,\dots,I}$ that  $(\mathbf A,\mathbf C,\mathbf L,\mathbf D,\mathbf U, \alpha^2\widehat{\lambda}_1,\dots,\alpha^2\widehat{\lambda}_R,  \alpha\widehat{\mathbf A}\mathbf T)\in Z_{1,\dots,I}$.  
     Thus,
   \begin{equation*}
   \begin{split}
    f^{-1}(\mathbf A,\mathbf C)\supset\{
    \mathbf A,\mathbf C,\mathbf L,\mathbf D,\mathbf U, \alpha^2&\widehat{\lambda}_1,\dots,\alpha^2\widehat{\lambda}_R,
       \alpha\widehat{\mathbf A}\mathbf T:\\ 
       &\alpha\in\mathbb C, \ \mathbf T\in\mathbb C^{(m-1)\times(m-1)}\ \alpha\ne 0,\ 
           \mathbf T\mathbf T^T=\mathbf I\}.
           \end{split}
     \end{equation*}
      Hence, by Lemma \ref{lemma:propalggeom} \textup{(iii)} and by Lemma \ref{lemma:moregencomport} below,
      \begin{equation*}
      \begin{split}
     \dim f^{-1}(\mathbf A,\mathbf C)\geq&\  \dim \{\alpha\widehat{\mathbf A}\mathbf T:\ \alpha\in\mathbb C, \ \mathbf T\in\mathbb C^{(m-1)\times(m-1)},\ \alpha\ne 0,\ 
              \mathbf T\mathbf T^T=\mathbf I\}\\               =&\ \frac{(m-1)(m-2)}{2}+1:=d.
              \end{split}
     \end{equation*}
     Let us now prove assumption \eqref{eq:assumption}. If $r_{\widetilde{\mathbf A}}=m-1$, then we just set $\widehat{\mathbf A}:=\widetilde{\mathbf A}$ and $\widehat{\lambda}_i:=\lambda_i$.  If  $r_{\widetilde{\mathbf A}}=k\leq m-2$, then we construct $\widehat{\lambda}_1,\dots,\widehat{\lambda}_R$ and $\widehat{\mathbf A}$ from
        $\lambda_1,\dots,\lambda_R$, some (generic) values $\mu_1,\dots,\mu_{m-1-k}$, the matrix $\widetilde{\mathbf A}$, and certain $m-1-k$ columns of   $\mathbf A_{1,\dots,I}$ as follows. Let $\mathbf S$ be a full column rank $I\times k$ matrix such that $\mathbf S\mathbf S^T=
        \widetilde{\mathbf A}\widetilde{\mathbf A}^T$.
        Since the columns of   $\mathbf A_{1,\dots,I}$ form a basis of $\mathbb C^I$,
        the columns of $\mathbf S$ can be extended to a basis of $\mathbb C^I$ by adding suitable $m-1-k$
        columns of $\mathbf A_{1,\dots,I}$. In particular,  the $I\times (m-1)$ matrix
        $\widehat{\mathbf A}:=[\mathbf S\ \mathbf A_{i_1,\dots,i_{m-1-k}}\Diag{\mu_1,\dots,\mu_{m-1-k}}]$
        has full column rank for certain indicies $i_1,\dots,i_{m-1-k}$ such that $1\leq i_1<\dots<i_{m-1-k}\leq I$ and for any nonzero values $\mu_1,\dots,\mu_{m-1-k}$. 
        We construct $\widehat{\lambda}_1,\dots,\widehat{\lambda}_R$ from
       $ \lambda_1,\dots,\lambda_R$
        as
       \begin{flalign*}
             \qquad\qquad\qquad\qquad&\widehat{\lambda}_{i_j}=\lambda_{i_j}+\mu_j^2,\ &\text{for } j\in\{1,\dots,m-1-k\},& &\qquad\qquad\\
       &\widehat{\lambda}_i =   \lambda_i,\ &\text{for } i\not\in\{i_1,\dots,i_{m-1-k}\}.\quad & &\qquad\qquad\qquad\qquad
              \end{flalign*}
       We choose nonzero $\mu_1,\dots,\mu_{m-1-k}$ such that
      $\widehat{\lambda}_1\cdots,\widehat{\lambda}_I\ne 0$.
        From \eqref{eq:ABC2indscal1} and the identity $\mathbf S\mathbf S^T=
           \widetilde{\mathbf A}\widetilde{\mathbf A}^T$
     	    it follows that
     	    \begin{equation*}
     	           \begin{split}
     	          &\ \mathbf A\Diag{\widehat{\lambda}_1,\dots,\widehat{\lambda}_R}\mathbf A^T\\
     	          =&\ 
     	           \mathbf A\Diag{\lambda_1,\dots,\lambda_R}\mathbf A^T
     	           +\mathbf A_{i_1,\dots,i_{m-1-k}} \Diag{\mu_1^2,\dots,\mu_{m-1-k}^2}\mathbf A_{i_1,\dots,i_{m-1-k}}^T\\
     	           =&\ \widetilde{\mathbf A}\widetilde{\mathbf A}^T+ \mathbf A_{i_1,\dots,i_{m-1-k}} \Diag{\mu_1^2,\dots,\mu_{m-1-k}^2}\mathbf A_{i_1,\dots,i_{m-1-k}}^T\\
     	           =&\ [\mathbf S\ \mathbf A_{i_1,\dots,i_{m-1-k}}\Diag{\mu_1,\dots,\mu_{m-1-k}}][\mathbf S\ \mathbf A_{i_1,\dots,i_{m-1-k}}\Diag{\mu_1,\dots,\mu_{m-1-k}}]^T\\
     	           =&\ \widehat{\mathbf A}\widehat{\mathbf A}^T.
     	           \end{split}
     	  \end{equation*}
     	          Thus,  $(\mathbf A,\mathbf C,\mathbf L,\mathbf D,\mathbf U, \widehat{\lambda}_1,\dots, \widehat{\lambda}_R, \widehat{\mathbf A})\in Z_{1,\dots,I}$.
     	          
  \qquad\textup{(3iii)} By Lemma \ref{Theorem:FDTh} and \eqref{indscal1fromintro2}, 
 \begin{equation*}
 \begin{split}
  \dim \overline{\pi(Z_{1,\dots,I})}\leq&\ \dim Z_{1,\dots,I}-d\\
  =&\ IR+KR+R-\frac{I^2-I(2m-3)}{2}-
  \frac{(m-1)(m-2)}{2}-1\\
  \leq&\ IR+KR-1.
  \end{split}
  \end{equation*}
   \qquad\textup{(4)} Hence, $\dim W\leq\max\limits_{1\leq u_1<\dots<u_I\leq R}\dim \pi(Z_{u_1,\dots,u_I}) \leq IR+KR-1$.
   
   Since $W\subset \mathbb C^{I\times R}\times \mathbb C^{K\times R}$ and 
   $\dim W\leq IR+KR-1$, it follows from  Lemma \ref{lemma1} that $\mu_{\mathbb C}(W)=0$. 
\qquad\endproof

 The following lemma is well-known, it formalizes for a particular case the fact that $\dim Z_{\widehat{\mathbf A}}$ coincides with the number of ``free parameters'' (namely, one parameter for $\alpha$ and
 $\frac{(m-1)(m-2)}{2}$ parameters for the complex orthogonal matrix $\mathbf T$) used to describe $Z_{\widehat{\mathbf A}}$.
 \begin{lemma} \label{lemma:moregencomport}
 Let the $I\times(m-1)$ matrix $\widehat{\mathbf A}$ have full column rank and
$$
Z_{\widehat{\mathbf A}}=\{\alpha\widehat{\mathbf A}\mathbf T :\ \alpha\in\mathbb C, \ \mathbf T\in\mathbb C^{(m-1)\times(m-1)},\  \mathbf T\mathbf T^T=\mathbf I\}\subset\mathbb C^{I\times(m-1)}.
$$
 Then $\dim Z = \frac{(m-1)(m-2)}{2}+1$.
\end{lemma}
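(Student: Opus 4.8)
The plan is to strip away $\widehat{\mathbf A}$, reduce to an intrinsic computation inside $\mathbb C^{(m-1)\times(m-1)}$, and then recognize the resulting set as scalar multiples of the complex orthogonal group, whose dimension is classical. First I would exploit that $\widehat{\mathbf A}$ has full column rank: the linear map $L:\mathbf X\mapsto\widehat{\mathbf A}\mathbf X$ from $\mathbb C^{(m-1)\times(m-1)}$ to $\mathbb C^{I\times(m-1)}$ is then injective, hence an isomorphism onto its (linear) image and so preserves the dimension of every subset. Since $\alpha\widehat{\mathbf A}\mathbf T=\widehat{\mathbf A}(\alpha\mathbf T)$, the map $L$ sends the set $S:=\{\alpha\mathbf T:\ \alpha\in\mathbb C,\ \alpha\ne0,\ \mathbf T\mathbf T^T=\mathbf I\}\subset\mathbb C^{(m-1)\times(m-1)}$ bijectively onto $Z_{\widehat{\mathbf A}}$. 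Therefore $\dim Z_{\widehat{\mathbf A}}=\dim S$, and $\widehat{\mathbf A}$ (together with the ambient dimension $I$) disappears from the problem.

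Next I would write $S=\phi(\mathbb C^*\times O(m-1,\mathbb C))$, where $\mathbb C^*=\mathbb C\setminus\{0\}$, $\phi(\alpha,\mathbf T)=\alpha\mathbf T$, and $O(m-1,\mathbb C)=\{\mathbf T:\mathbf T\mathbf T^T=\mathbf I\}$ is the complex orthogonal group. The group $O(m-1,\mathbb C)$ is a classical affine variety of dimension $\frac{(m-1)(m-2)}{2}$: it is cut out in $\mathbb C^{(m-1)\times(m-1)}$ by the $\binom{m}{2}$ equations coming from the symmetric matrix identity $\mathbf T\mathbf T^T=\mathbf I$, and $(m-1)^2-\binom{m}{2}=\frac{(m-1)(m-2)}{2}$. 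Its identity component $SO(m-1,\mathbb C)$ is irreducible of the same dimension, so $\mathbb C^*\times SO(m-1,\mathbb C)$ is irreducible of dimension $1+\frac{(m-1)(m-2)}{2}$.

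The crucial step is then to show that $\phi$ does not lower the dimension, i.e. that the scalar factor genuinely contributes one extra dimension. For this I would compute the fibers of $\phi$: if $\mathbf M=\alpha\mathbf T$ with $\mathbf T\mathbf T^T=\mathbf I$, then $\mathbf M\mathbf M^T=\alpha^2\mathbf I$, so $\alpha^2$ is determined by $\mathbf M$, whence $\alpha$ is fixed up to sign and $\mathbf T=\mathbf M/\alpha$. Thus every nonempty fiber consists of exactly the two points $(\alpha,\mathbf T),(-\alpha,-\mathbf T)$ and has dimension $0$. Applying the fiber-dimension theorem \cite[Theorem 3.7, p. 78]{Perrin2008} (the general statement behind Lemma \ref{Theorem:FDTh}) to the dominant morphism from the irreducible $\mathbb C^*\times SO(m-1,\mathbb C)$ onto the closure of its image, that image has dimension $1+\frac{(m-1)(m-2)}{2}-0$. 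Running the identical argument on the other component of $\mathbb C^*\times O(m-1,\mathbb C)$ and combining by Lemma \ref{lemma:propalggeom} \textup{(iv)} yields $\dim S=\frac{(m-1)(m-2)}{2}+1$, and by the reduction $\dim Z_{\widehat{\mathbf A}}=\frac{(m-1)(m-2)}{2}+1$.

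I expect the only genuinely delicate point to be this last bound from below on $\dim S$, namely certifying that multiplication by the scalar $\alpha$ enlarges the dimension by exactly one rather than collapsing a direction (which it could, a priori, since scalars and orthogonal matrices overlap in $\pm\mathbf I$). The finite-fiber computation together with the fiber-dimension theorem is precisely what excludes this; the reduction via $\widehat{\mathbf A}$ and the value $\frac{(m-1)(m-2)}{2}$ for the orthogonal group are routine bookkeeping.
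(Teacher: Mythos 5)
Your proof is correct, but it is worth noting that the paper does not actually prove this lemma at all: it declares it well-known and appeals to the heuristic that $\dim Z_{\widehat{\mathbf A}}$ equals the number of ``free parameters'' (one for $\alpha$, $\frac{(m-1)(m-2)}{2}$ for the complex orthogonal matrix $\mathbf T$). Your argument is a rigorous implementation of exactly that heuristic, and in the same style as the paper's other dimension arguments. The three ingredients are sound: (a) the reduction via the injective linear map $\mathbf X\mapsto\widehat{\mathbf A}\mathbf X$, which legitimately removes $\widehat{\mathbf A}$ and the ambient dimension $I$ from the problem; (b) the identification of the resulting set with the image of $\mathbb C^*\times O(m-1,\mathbb C)$, whose two components are irreducible of dimension $1+\frac{(m-1)(m-2)}{2}$; and (c) --- the step the paper's parameter count silently assumes --- the two-point fiber computation for $(\alpha,\mathbf T)\mapsto\alpha\mathbf T$, which is what rules out a collapse of the scalar direction. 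Your appeal to the general fiber dimension theorem of Perrin for step (c) is necessary and correctly placed: the paper's Lemma \ref{Theorem:FDTh} only yields the upper bound $\dim S\leq 1+\frac{(m-1)(m-2)}{2}$, and the lower bound requires the dominant-morphism statement that every nonempty fiber has dimension at least $\dim X-\dim Y$, which with finite fibers forces $\dim Y=\dim X$; combining the two components by Lemma \ref{lemma:propalggeom} (iv) is also right. Two cosmetic remarks: the lemma's set allows $\alpha=0$, which adds only the zero matrix and changes nothing; and your equation count for $O(m-1,\mathbb C)$ by itself only bounds its dimension from below (each equation drops the dimension by at most one), so the exact value $\frac{(m-1)(m-2)}{2}$ still rests on the classical fact that $O(m-1,\mathbb C)$ is smooth with tangent space at $\mathbf I$ the skew-symmetric matrices --- citing this as classical is acceptable, given that the paper cites the entire lemma as such.
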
 

       \section{Proof of Proposition \ref{Proposition1.4}}\label{Sectionformercor}
Proposition  \ref{Proposition1.4} follows immediately from  Proposition \ref{PropositionINDSCAL}. To formalize the derivation, we need the trivial observation expressed in  the following lemma.
\begin{lemma}\label{lemma:trivial}
Assume that the  SFS-CPD of  an $I\times I\times K_1$ SFS-tensor of SFS-rank $R_1$ is generically unique and let
$K_1\leq K_2$, $R_2\leq R_1$. Then 
 the  SFS-CPD of  an $I\times I\times K_2$ SFS-tensor of SFS-rank $R_2$ is generically unique.
\end{lemma}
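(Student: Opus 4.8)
The plan is to factor the statement into two elementary monotonicity steps and to chain them: starting from generic uniqueness at $(I,K_1,R_1)$, I would first pass to $(I,K_2,R_1)$ by \emph{increasing the third dimension} with the rank held fixed, and then to $(I,K_2,R_2)$ by \emph{decreasing the rank} with the dimensions held fixed. The general case $K_1\le K_2$, $R_2\le R_1$ is the composition of these two moves, so it suffices to treat each one separately. In both steps the mechanism is the same: a generic point of the ``target'' problem is obtained from a generic point of the ``source'' problem by a measure-preserving operation (dropping/appending rows, or appending generic rank-$1$ terms) under which the two competing SFS-decompositions correspond, so that non-uniqueness of the target forces non-uniqueness of the source. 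Lemma \ref{lemma1} together with Fubini's theorem then transfers the measure-zero conclusion, exactly in the spirit of Subsection \ref{subsection:zeromeausure}.

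For the step that increases $K$ (rank $R$ fixed), let $(\mathbf A,\mathbf C_2)\in\mathbb F^{I\times R}\times\mathbb F^{K_2\times R}$ and write $\mathbf C_1$ for the submatrix of $\mathbf C_2$ formed by its first $K_1$ rows, so that $[\mathbf A,\mathbf A,\mathbf C_1]_R$ is the restriction of $[\mathbf A,\mathbf A,\mathbf C_2]_R$ to its first $K_1$ frontal slices. Since the non-uniqueness set $V_1\subset\mathbb F^{I\times R}\times\mathbb F^{K_1\times R}$ has measure zero and its preimage under $(\mathbf A,\mathbf C_2)\mapsto(\mathbf A,\mathbf C_1)$ is $V_1\times\mathbb F^{(K_2-K_1)\times R}$, for almost every $(\mathbf A,\mathbf C_2)$ the restricted tensor has SFS-rank $R$ and a unique SFS-CPD. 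Given any SFS-CPD $[\mathbf A',\mathbf A',\mathbf C_2']_R$ of $[\mathbf A,\mathbf A,\mathbf C_2]_R$, its restriction $[\mathbf A',\mathbf A',\mathbf C_1']_R$ is a sum of $R$ SFS-rank-$1$ terms equal to $[\mathbf A,\mathbf A,\mathbf C_1]_R$; minimality forces all these terms to be nonzero, so it is again an SFS-CPD, and uniqueness at $(I,K_1,R)$ pins $\mathbf A'$ down to $\mathbf A$ up to the trivial permutation and scaling. Uniqueness at $(I,K_1,R)$ also forces the symmetric outer products $\mathbf a_1\circ\mathbf a_1,\dots,\mathbf a_R\circ\mathbf a_R$ to be linearly independent (otherwise the third factor of the restricted tensor could be perturbed nontrivially while leaving the tensor unchanged), and matching the remaining slices $\mathbf A\,\Diag{\mathbf c}\,\mathbf A^T$ then recovers all of $\mathbf C_2'$ from $\mathbf A'$ up to the same triviality. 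Hence $[\mathbf A,\mathbf A,\mathbf C_2]_R$ is generically unique.

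For the step that decreases $R$ (dimensions fixed), let $(\mathbf A_2,\mathbf C_2)\in\mathbb F^{I\times R_2}\times\mathbb F^{K\times R_2}$ and append $R_1-R_2$ further columns $(\mathbf A_0,\mathbf C_0)$ to form $(\mathbf A_1,\mathbf C_1):=([\mathbf A_2\ \mathbf A_0],[\mathbf C_2\ \mathbf C_0])$, so that $[\mathbf A_1,\mathbf A_1,\mathbf C_1]_{R_1}=[\mathbf A_2,\mathbf A_2,\mathbf C_2]_{R_2}+[\mathbf A_0,\mathbf A_0,\mathbf C_0]_{R_1-R_2}$. The concatenation map is a measure-preserving bijection onto $\mathbb F^{I\times R_1}\times\mathbb F^{K\times R_1}$, so for a generic pair $(\mathbf A_2,\mathbf C_2)$ and a generic extension $(\mathbf A_0,\mathbf C_0)$ the tensor $[\mathbf A_1,\mathbf A_1,\mathbf C_1]_{R_1}$ has SFS-rank $R_1$ and a unique SFS-CPD. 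If $[\mathbf A_2,\mathbf A_2,\mathbf C_2]_{R_2}$ admitted an essentially different SFS-CPD $[\mathbf A_2',\mathbf A_2',\mathbf C_2']_{R_2}$, then appending the same terms $(\mathbf A_0,\mathbf C_0)$ would produce a second $R_1$-term SFS-CPD of $[\mathbf A_1,\mathbf A_1,\mathbf C_1]_{R_1}$; for almost every $(\mathbf A_0,\mathbf C_0)$ the appended rank-$1$ terms coincide, up to scaling, with none of the finitely many terms of either $R_2$-decomposition, so cancelling the common appended terms shows the two $R_1$-decompositions are essentially different, contradicting uniqueness at $(I,K,R_1)$. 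Thus for each $(\mathbf A_2,\mathbf C_2)$ in the non-uniqueness set $V_2$ almost every extension lands in $V_1$; were $\mu_{\mathbb F}(V_2)>0$, Fubini would give $\mu_{\mathbb F}(V_1)>0$, a contradiction, so $\mu_{\mathbb F}(V_2)=0$.

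The remaining bookkeeping (genericity of the restricted and extended factors, and the reduction of $\mathbb F=\mathbb R$ to $\mathbb F=\mathbb C$) is routine and identical to Subsection \ref{subsection:zeromeausure}. I expect the only genuinely delicate points to be the two I have flagged. In the $K$-step, the hard part is ensuring that restricting to fewer slices does not silently lower the rank or annihilate terms of the competing decomposition; this is resolved by the generic equality of the SFS-rank with $R$ and by the linear independence of the $\mathbf a_r\circ\mathbf a_r$ that uniqueness at $K_1$ itself forces. In the $R$-step, the hard part is ruling out cross-cancellation between the appended generic terms and the two competing $R_2$-decompositions; this is resolved by the generic distinctness of the appended rank-$1$ terms. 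Everything else is a direct application of Lemma \ref{lemma1} and Fubini's theorem.
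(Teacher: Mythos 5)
Your proposal is correct, but there is nothing in the paper to compare it against: the authors state Lemma \ref{lemma:trivial} as a ``trivial observation'' and give no proof whatsoever. What you have written is the natural formalization of what they left implicit, and the structure you chose --- factoring the claim into a slice-extension step ($K_1\to K_2$ with $R$ fixed) and a rank-reduction step ($R_1\to R_2$ with $K$ fixed), each transferred to the source problem via Fubini's theorem --- is almost certainly the intended argument. The two places where you put real work are exactly where the ``triviality'' hides. In the $K$-step, recovering the last $K_2-K_1$ rows of the competing factor $\mathbf C_2'$ requires the linear independence of $\mathbf a_1\mathbf a_1^T,\dots,\mathbf a_R\mathbf a_R^T$ (equivalently, full column rank of $\mathbf A\odot\mathbf A$); your observation that this is itself a \emph{necessary} condition for uniqueness of the SFS-CPD of the restricted tensor (otherwise a row of $\mathbf C_1$ can be perturbed by a kernel vector without changing the tensor) closes the step deterministically, with no additional genericity input. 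In the $R$-step, your quantifier order is the right one: fix one alternative decomposition of $[\mathbf A_2,\mathbf A_2,\mathbf C_2]_{R_2}$ first, then choose the extension $(\mathbf A_0,\mathbf C_0)$ generically with respect to the $2R_2$ terms of the two fixed decompositions, so that multiset cancellation of the appended terms is legitimate.

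One piece of bookkeeping should be made explicit for the argument to be airtight. Throughout, the bad sets $V_1,V_2$ must be taken as $\{(\mathbf A,\mathbf C):\ r_{[\mathbf A,\mathbf A,\mathbf C]_R}\ne R\ \text{or the SFS-CPD is not unique}\}$, i.e.\ rank deficiency must be folded into ``non-uniqueness''. This is consistent with what the paper's deterministic results deliver (Propositions \ref{Prop:Indscal1} and \ref{Prop:Indscal2} conclude both $r_{\mathcal T}=R$ \emph{and} uniqueness), but it is not literally forced by the paper's definition of generic uniqueness. The point matters in your $R$-step: for $(\mathbf A_2,\mathbf C_2)\in V_2$ the appended tensor $[\mathbf A_1,\mathbf A_1,\mathbf C_1]_{R_1}$ could a priori have SFS-rank $<R_1$, in which case your two competing $R_1$-term decompositions are not CPDs and witness no non-uniqueness; with the inclusive definition the point still lands in the bad set, since rank deficiency propagates under appending ($r\le r_{[\mathbf A_2,\mathbf A_2,\mathbf C_2]_{R_2}}+R_1-R_2<R_1$), and the Fubini contradiction goes through. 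The same convention is what justifies, in your $K$-step, the assertion that the restricted tensor generically has SFS-rank exactly $R$. With that convention stated, your proof is complete.
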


{\em Proof of Proposition \ref{Proposition1.4}.}
Assume that $4\leq I\leq K\leq \frac{I^2-I}{2}$. By Proposition \ref{PropositionINDSCAL},
the  SFS-CPD of  an $I\times I\times K$ SFS-tensor of SFS-rank $R_1=K$ is generically unique.
The uniqueness for $R=R_2\leq R_1=K$ follows from Lemma \ref{lemma:trivial}.

Assume next that $K\geq K_1=\frac{I^2-I}{2}$. By Proposition \ref{PropositionINDSCAL},
the  SFS-CPD of  an $I\times I\times K_1$ SFS-tensor of SFS-rank $R_1=\frac{I^2-I}{2}$ is generically unique.
Now the result again follows from Lemma \ref{lemma:trivial},  for $K_2=K$ and $R_2=R$.
\qquad\endproof
\section{Uniqueness of the SFS-CPD and proof of Proposition \ref{PropositionINDSCAL2}}\label{Section5}
The following deterministic  condition for  uniqueness of the SFS-CPD will be checked for a generic SFS-tensor in the
proof of Proposition \ref{PropositionINDSCAL2}.
\begin{proposition}\label{Prop:Indscal2}
Let $\mathcal T=[\mathbf A, \mathbf A, \mathbf C]_R$ and
$m_{\mathbf A}:=R-r_{\mathbf A}+2$. Assume that
\begin{itemize}
\item[\textup{(i)}] $\max(\min( k_{\mathbf A}, k_{\mathbf C}-1),\ \min( k_{\mathbf A}-1, k_{\mathbf C}))+k_{\mathbf A}\geq R+1$;
\item[\textup{(ii)}] condition \condUmA\ holds for the pair $(\mathbf A,\mathbf C)$.
\end{itemize}
Then $r_{\mathcal T}=R$ and the SFS-CPD of tensor $\mathcal T$  is unique.
\end{proposition}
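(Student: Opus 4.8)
The plan is to reduce Proposition \ref{Prop:Indscal2} to the unstructured deterministic condition of Proposition \ref{ProFullUniq1onematrixWm}, exactly as was done for Proposition \ref{Prop:Indscal1}, but with the roles of the modes permuted so that it is a \emph{copy of $\mathbf A$}, rather than $\mathbf C$, that is singled out as the third factor. Uniqueness (and non-uniqueness) of a polyadic decomposition is invariant under permutation of the three tensor modes, so the CPD of $\mathcal T=[\mathbf A,\mathbf A,\mathbf C]_R$ is unique if and only if the CPD of the mode-permuted tensor $[\mathbf A,\mathbf C,\mathbf A]_R$ is unique. First I would therefore apply Proposition \ref{ProFullUniq1onematrixWm} to $[\mathbf A,\mathbf C,\mathbf A]_R$, reading its first, second, and third factors as $\mathbf A$, $\mathbf C$, and $\mathbf A$; the distinguished third mode is now $\mathbf A$, and the relevant index is $m=m_{\mathbf A}=R-r_{\mathbf A}+2$, matching the setup of Proposition \ref{Prop:Indscal2}.

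With this identification the three hypotheses of Proposition \ref{ProFullUniq1onematrixWm} specialize directly to the assumptions at hand. Hypothesis \textup{(i)} becomes $\max(\min(k_{\mathbf A},k_{\mathbf C}-1),\min(k_{\mathbf A}-1,k_{\mathbf C}))+k_{\mathbf A}\geq R+1$, which is precisely assumption \textup{(i)}. Hypothesis \textup{(ii)} asks that \condWm\ (with $m=m_{\mathbf A}$) hold for the triplet $(\mathbf A,\mathbf C,\mathbf A)$; by implication \eqref{eq:Umimplies1}, applied to the pair $(\mathbf A,\mathbf C)$ with third matrix $\mathbf A$, this follows from \condUmA\ for $(\mathbf A,\mathbf C)$, i.e. from assumption \textup{(ii)}. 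Hypothesis \textup{(iii)} asks that $\mathbf A\odot\mathbf C$ have full column rank, which by implication \eqref{eq:Umimplies2} again follows from \condUmA\ for $(\mathbf A,\mathbf C)$. Hence all three hypotheses hold, Proposition \ref{ProFullUniq1onematrixWm} gives $r_{\mathcal T}=R$ and uniqueness of the CPD of $[\mathbf A,\mathbf C,\mathbf A]_R$, and therefore of $[\mathbf A,\mathbf A,\mathbf C]_R$. Uniqueness of the SFS-CPD is then immediate: any SFS-decomposition of $\mathcal T$ is in particular an $R$-term polyadic decomposition, so it must coincide, up to the trivial indeterminacies, with the CPD just shown to be unique.

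I do not expect a genuine obstacle here; the argument is a one-step reduction and is essentially bookkeeping. The only points that warrant care are that the pair is fed into \eqref{eq:Umimplies1}--\eqref{eq:Umimplies2} in the correct order $(\mathbf A,\mathbf C)$ (matching the first two factors of the permuted tensor), and that assumption \textup{(i)} is read off as the permuted form of Proposition \ref{ProFullUniq1onematrixWm}\,\textup{(i)}; it is worth noting that $\max(\min(a,b-1),\min(a-1,b))$ is symmetric in $a$ and $b$, so it is immaterial which of the two copies of $\mathbf A$ is moved into the third slot. The conceptual content that distinguishes Proposition \ref{Prop:Indscal2} from Proposition \ref{Prop:Indscal1} is simply that here $m$ is governed by $r_{\mathbf A}$ rather than $r_{\mathbf C}$, which is what makes the two deterministic conditions genuinely different inputs for the generic statements in Propositions \ref{PropositionINDSCAL} and \ref{PropositionINDSCAL2}.
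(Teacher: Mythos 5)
Your proposal is correct and follows essentially the same route as the paper's own proof: both permute the modes to view $\mathcal T$ as $[\mathbf A,\mathbf C,\mathbf A]_R$, apply Proposition \ref{ProFullUniq1onematrixWm} with the third factor $\mathbf A$ (so that $m=m_{\mathbf A}$), verify hypothesis \textup{(i)} directly, and obtain hypotheses \textup{(ii)} and \textup{(iii)} from \condUmA\ for $(\mathbf A,\mathbf C)$ via \eqref{eq:Umimplies1} and \eqref{eq:Umimplies2}. The only difference is that you spell out the final passage from CPD uniqueness to SFS-CPD uniqueness in slightly more detail than the paper does, which is harmless.
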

\begin{proof}
It is clear that if the CPD of the reshaped $I\times K\times I$ tensor $\widetilde{\mathcal T}=[\mathbf A, \mathbf C, \mathbf A]_R$ is unique, then the CPD, and in particular, SFS-CPD $\mathcal T=[\mathbf A, \mathbf A, \mathbf C]_R$ is unique.
We show that \textup{(i)}--\textup{(iii)} in Proposition \ref{ProFullUniq1onematrixWm} hold for
$\widetilde{\mathcal T}=[\mathbf A, \mathbf C, \mathbf A]_R$: \textup{(i)} is obtained by replacing $k_{\mathbf B}$ and $k_{\mathbf C}$ by $k_{\mathbf C}$ and $k_{\mathbf A}$, respectively and \textup{(ii)} and \textup{(iii)} follow from \eqref{eq:Umimplies1} and \eqref{eq:Umimplies2}, respectively.
\end{proof}

{\em Proof of Proposition \ref{PropositionINDSCAL2}.} We show that
\begin{equation*} 
\mu_{\mathbb F}\{(\mathbf A,\mathbf C):\ \text{\textup{(i)} or \textup{(ii)} of Proposition \ref{Prop:Indscal2} does not hold}\}=0.
\end{equation*}
As in the proofs of Propositions \ref{Propositionunstructured} and \ref{PropositionINDSCAL}, it is sufficient to prove that
\begin{equation*} 
\begin{split}
\mu_{\mathbb F}\{(\mathbf A,\mathbf C):\ &\text{\textup{(i)} or \textup{(ii)} of Proposition \ref{Prop:Indscal2} does not hold and}\\
&k_{\mathbf A}=I,\ k_{\mathbf C}=K \}=0.
\end{split}
\end{equation*}

{\bf Condition \textup{(i)}:}
By \eqref{eqIndscalKlessI1}--\eqref{eqIndscalKlessI},
$$
R+2\leq I+K+\frac{I+K-3-\sqrt{(I+K-3)^2+8K-12}}{2}\leq I+K
$$
which easily implies that condition \textup{(i)} of Proposition \ref{Prop:Indscal2}
holds for all matrices $\mathbf A$ and $\mathbf C$ such that 
$k_{\mathbf A}=I$ and $k_{\mathbf C}=K$.

{\bf Condition \textup{(ii)}:}
Let
\begin{equation} \label{eq:fullkrank2787822}
\begin{split}
W=\{(\mathbf A,\mathbf C):\ \text{\textup{(ii)} of Proposition \ref{Prop:Indscal2} } & \text{does not hold and}\\
& k_{\mathbf A}=I, \ k_{\mathbf C}=K \}\subset \mathbb F^{I\times R}\times \mathbb F^{K\times R}.
\end{split}
\end{equation}
As in the proofs of Propositions \ref{Propositionunstructured} and \ref{PropositionINDSCAL}, to prove that $\mu_{\mathbb F}\{W\}=0$   we show that
for $\mathbb F=\mathbb C$, 
\begin{equation} \label{eq:fullkrank2344}
\dim \overline{W}\leq IR+KR-1.
\end{equation}
To prove bound \eqref{eq:fullkrank2344} we follow the steps in Procedure \ref{proc}.

{\bf Phase I: ``Parameterization''.}

\textup{(1)} We associate $W$ with a certain $\pi(\widehat Z)$.
By Definition \ref{Def:Um},  condition \condUm\  does not hold for the pair $(\mathbf A, \mathbf C)$ if and only if
there exist values  $\lambda_1,\dots,\lambda_R$, and matrices $\widetilde{\mathbf A}\in \mathbb C^{I\times (m-1)}$, $\widetilde{\mathbf C}\in \mathbb C^{K\times (m-1)}$ such that
$$
 \mathbf A\Diag{\lambda_1,\dots,\lambda_R}\mathbf C^T=\widetilde{\mathbf A}\widetilde{\mathbf C}^T, \ \
$$
but $\omega(\lambda_1,\dots,\lambda_R)\geq m$.
We claim that if additionally
$k_{\mathbf A}=I$ and $k_{\mathbf C}=K$, then
$\omega(\lambda_1,\dots,\lambda_R)\geq K$. Indeed, if
$m=m_{\mathbf A}\leq \omega(\lambda_1,\dots,\lambda_R) <K$, then  by the Frobenius inequality,
\begin{equation*}
\begin{split}
m-1\geq&\ r_{\widetilde{\mathbf A}\widetilde{\mathbf C}^T}=r_{\mathbf A\Diag{\lambda_1,\dots,\lambda_R}\mathbf C^T}\\
\geq&\
r_{\mathbf A\Diag{\lambda_1,\dots,\lambda_R}}+r_{\Diag{\lambda_1,\dots,\lambda_R}\mathbf C^T}-
r_{\Diag{\lambda_1,\dots,\lambda_R}}\\
=&\ \omega(\lambda_1,\dots,\lambda_R)+\omega(\lambda_1,\dots,\lambda_R)-\omega(\lambda_1,\dots,\lambda_R)=
\omega(\lambda_1,\dots,\lambda_R)\geq m,
\end{split}
\end{equation*}
which is a contradiction.
Hence, $W$ in \eqref{eq:fullkrank2787822} can be expressed as
\begin{align}
W=\{(\mathbf A,\mathbf C):\ &\text{\condUm\  does not hold for the pair } (\mathbf A,\mathbf C),\
 k_{\mathbf A}=I, \ k_{\mathbf C}=K \} \nonumber\\
=\{(\mathbf A, \mathbf C):\ &\text{there exist } \lambda_1,\dots,\lambda_R\in\mathbb C,\ \widetilde{\mathbf A}\in \mathbb C^{I\times (m-1)},\ \text{ and }\ \widetilde{\mathbf C}\in \mathbb C^{K\times (m-1)}\ \nonumber \\
&\text{ such that }\  \mathbf A\Diag{\lambda_1,\dots,\lambda_R}\mathbf C^T=\widetilde{\mathbf A}\widetilde{\mathbf C}^T,
\label{eq:ABC1last}\\
&\ \omega(\lambda_1,\dots,\lambda_R)\geq K,\ k_{\mathbf A}=I,\ k_{\mathbf C}=K\}.\label{eq:ABC3last}
\end{align}
It is now clear that $W=\pi(\widehat Z)$, where 
$$
\widehat Z=\{(\mathbf A, \mathbf C,
 \lambda_1,\dots,\lambda_R,
 \widetilde{\mathbf A}, \widetilde{\mathbf C}):\ \text{ \eqref{eq:ABC1last}--\eqref{eq:ABC3last} hold}\}
$$
is a subset of
$ 
\mathbb C^{I\times R}\times \mathbb C^{K\times R}\times\mathbb C^R\times \mathbb C^{I\times (m-1)}\times \mathbb C^{K\times (m-1)}
$ 
and $\pi$ is the projection onto the first two factors 
\begin{equation*} 
\pi:\ \mathbb C^{I\times R}\times \mathbb C^{K\times R}\times\mathbb C^R\times \mathbb C^{I\times (m-1)}\times \mathbb C^{K\times (m-1)}\rightarrow
 \mathbb C^{I\times R}\times  \mathbb C^{K\times R}.
\end{equation*}

\textup{(2)}  Since
\begin{equation*}
\omega(\lambda_1,\dots,\lambda_R)\geq K\ \ \Leftrightarrow\ \ 
\lambda_{u_1}\cdots\lambda_{u_K}\ne 0\ \text{ for some }\ 1\leq u_1<\dots<u_K\leq R 
\end{equation*}
we obtain
\begin{equation*}
\begin{split}
\widehat Z= \bigcup\limits_{1\leq u_1<\dots<u_K\leq R}\{(\mathbf A, \mathbf C,
 \lambda_1,\dots,\lambda_R,
 \widetilde{\mathbf A}, \widetilde{\mathbf C}):\ &\text{ \eqref{eq:ABC1last} holds,}\\ 
 \ &\lambda_{u_1}\cdots\lambda_{u_K}\ne 0,\ k_{\mathbf A}=I,\ k_{\mathbf C}=K\}.
 \end{split}
 \end{equation*}
  Let $\mathbf C_{u_1,\dots,u_K}$ denote the submatrix of  $\mathbf C$  formed by columns $u_1,\dots,u_K$.
  Dropping the condition $k_{\mathbf A}=I$ and relaxing the condition $k_{\mathbf C}=K$ as $\det{\mathbf C_{u_1,\dots,u_K}}\ne 0$ we obtain that
 $\widehat Z\subset\bigcup\limits_{1\leq u_1<\dots<u_K\leq R} Z_{u_1,\dots,u_K}$, where
 \begin{equation*}
 \begin{split}
 Z_{u_1,\dots,u_K}:=\{(\mathbf A, \mathbf C,
  \lambda_1,\dots,\lambda_R,
  \widetilde{\mathbf A}, \widetilde{\mathbf C}):\ 
  &\text{ \eqref{eq:ABC1last} holds},\\   
  & \lambda_{u_1}\cdots\lambda_{u_K}\ne 0, \ \det{\mathbf C_{u_1,\dots,u_K}}\ne 0\}.
  \end{split}
  \end{equation*}
 We show that all $Z_{u_1,\dots,u_K}$ are of the form \eqref{eq:setZ}.
  To simplify the presentation we restrict ourselves to the case $(u_1,\dots, u_K)=(1,\dots,K)$. The general case can be proved in the same way.
  Let the matrices $\mathbf A$ and $\mathbf C$ be partitioned as 
   $$
   \mathbf A=[\underbracket{\ \bar{\mathbf A}\ }_{K}\ \underbracket{\ \bar{\bar{\mathbf A}}\ }_{R-K}],\qquad
   \mathbf C=[\underbracket{\ \bar{\mathbf C}\ }_{K}\ \underbracket{\ \bar{\bar{\mathbf C}}\ }_{R-K}],
   $$
   so that $\bar{\mathbf C}=\mathbf C_{1,\dots,K}$. By \eqref{eq:ABC1last},
\begin{equation*} 
\bar{\mathbf A}=\left( \widetilde{\mathbf A}\widetilde{\mathbf C}^T - 
\bar{\bar{\mathbf A}}\Diag{\lambda_{K+1},\dots,\lambda_R}\bar{\bar{\mathbf C}}^T\right)
\left(\Diag{\lambda_{1},\dots,\lambda_K}\bar{\mathbf C}\right)^{-1}.
\end{equation*}
Thus, by Cramer's rule, each entry of $\bar{\mathbf A}$   can be written as
     a ratio of two polynomials in  entries of $\bar{\bar{\mathbf A}}$, $\mathbf C$, $\widetilde{\mathbf A}$, $\widetilde{\mathbf C}$ and the values  $\lambda_1,\dots,\lambda_R$. By the assumptions
      $\lambda_1\cdots\lambda_K\ne 0$ and $\det\bar{\mathbf C}\ne 0$, the denominator polynomial is nonzero.
   
    Hence, $Z_{1,\dots,K}$
 is indeed of the form \eqref{eq:setZ}, where  $z_1,\dots,z_m$ correspond to the entries of
  $\bar{\bar{\mathbf A}}$, $\mathbf C$, $\widetilde{\mathbf A}$, $\widetilde{\mathbf C}$ and the values  $\lambda_1,\dots,\lambda_R$, and where $z_{m+1},\dots,z_n$  correspond to the entries of $\bar{\mathbf A}$.\\
 {\bf Phase II: Obtaining a bound on $\dim W$.}
 
 \textup{(3)} 
  To obtain bounds on $\dim \pi(Z_{u_1,\dots,u_K})$ we follow the steps in Procedure \ref{procaux} for $X=\pi(Z_{u_1,\dots,u_K})$. W.l.o.g.   we again restrict ourselves to the case $(u_1,\dots,u_K)=(1.\dots,K)$.
  
  \textup{(3i)} By Lemma \ref{lemma:2.8}, $\dim Z_{1,\dots,K}= I(R-K)+KR+I(m-1)+K(m-1)+R$.
  
  \textup{(3ii)}  From the definition of $Z_{1,\dots,K}$ it follows that if 
    $(\mathbf A, \mathbf C,
        \lambda_1,\dots,\lambda_R, \widetilde{\mathbf A}, \widetilde{\mathbf C})\in Z_{1,\dots,K}$, then
     $(\mathbf A, \mathbf C,
       \alpha\lambda_1,\dots,\alpha\lambda_R,
       \widehat{\mathbf A}\mathbf T, \alpha\widehat{\mathbf C}\mathbf T^{-T})\in Z_{1,\dots,K}$, where $\alpha$ is an arbitrary nonzero value,  $\widehat{\mathbf A}$ is an arbitrary full column rank matrix such that $\textup{range}(\widehat{\mathbf A})\supset \textup{range}(\widetilde{\mathbf A})$,
           $\mathbf T$ is an arbitrary nonsingular   $(m-1)\times (m-1)$ matrix, and $\widehat{\mathbf C}$ satisfies  $\widehat{\mathbf A}\widehat{\mathbf C}^T=\widetilde{\mathbf A}\widetilde{\mathbf C}^T$.
        Hence,
      $$
      f^{-1}(\mathbf A, \mathbf C)\supset\{\mathbf A, \mathbf C,
       \alpha\lambda_1,\dots,\alpha\lambda_R,
       \widehat{\mathbf A}\mathbf T, \alpha\widehat{\mathbf C}\mathbf T^{-T}:\ \alpha\ne 0,\ \det\mathbf T\ne 0\}.
       $$
        By Lemma \ref{lemma:propalggeom} \textup{(iii)},
         $
            \dim f^{-1}(\mathbf A, \mathbf C)\geq \dim \{\alpha\lambda_1, \widehat{\mathbf A}\mathbf T:\ \alpha\ne 0,\ \det\mathbf T\ne 0\}
           $. Since by construction, the matrix $\widehat{\mathbf A}$ has full column rank and by assumption $\lambda_1\ne 0$, it follows that
         $	
         \dim \{\alpha\lambda_1, \widehat{\mathbf A}\mathbf T:\ \alpha\ne 0,\ \det\mathbf T\ne 0\}=1+(m-1)^2=:d.
         $
         Thus, $\dim f^{-1}(\mathbf A, \mathbf C)\geq d$.

 \textup{(3iii)}  By Lemma \ref{Theorem:FDTh} and 
 \eqref{eq3new222intro}, 
  \begin{equation*}
  \begin{split}
  \dim \overline{\pi(Z_{1,\dots,K})}\leq&\ \dim Z_{1,\dots,K}-d\\
 =&\ I(R-K)+KR+I(m-1)+K(m-1)+R-1-(m-1)^2\\
 \leq&\ IR+KR-1.
 \end{split}
  \end{equation*}
 \qquad \textup{(4)} Hence, $\dim W\leq\max\limits_{1\leq u_1<\dots<u_K\leq R}\dim \pi(Z_{u_1,\dots,u_K})\leq IR+KR-1$. 
         
Since $W\subset \mathbb C^{I\times R}\times \mathbb C^{K\times R}$ and 
$\dim W\leq IR+KR-1$, it follows from  Lemma \ref{lemma1} that $\mu_{\mathbb C}(W)=0$.
\qquad\endproof
\section{Conclusion}
We have obtained new conditions
guaranteeing generic uniqueness of a CPD and INDSCAL. The overall derivation was based on deterministic conditions for uniqueness previously
obtained in \cite{PartII}. Our bounds improve existing results in the case when one of the tensor's dimensions is significantly
larger than  the other dimensions. The derivation made use of algebraic geometry. We summarized some results from algebraic geometry in a procedure  that may be used in different applications to study generic conditions.  Important is that the condition can be  represented in terms of a subset of $\mathbb C^n$ parametrized by rational functions. The specific realization of the procedure depends on the problem at hand. In this paper we have explained how the procedure works for  CPD and INDSCAL. In future work, we will consider the generic uniqueness of block term decompositions \cite{LDLBTPartII}.

{\bf Acknowledgments:} The authors wish to thank Giorgio Ottaviani, Ed Dewey, and Galyna Dobrovolska for their 
assistance in algebraic geometry.

\bibliographystyle{plain}
\bibliography{assignment2}
\end{document}